\DeclareMathAlphabet{\mathpzc}{OT1}{pzc}{m}{it}
\newcommand{\FF}[1]{{\color{black}#1}}
\newcommand{\EO}[1]{{\color{black}#1}}
\newcommand{\T}{\mathscr{T}}
\newcommand{\TheTitle}{Error estimates for a pointwise tracking optimal control problem of a semilinear elliptic equation}
\newcommand{\ShortTitle}{A semilinear pointwise tracking control problem}
\newcommand{\TheAuthors}{A. Allendes, F. Fuica, E. Ot\'arola}
\headers{\ShortTitle}{\TheAuthors}
\title{{\TheTitle}\thanks{AA is partially supported by CONICYT through FONDECYT project 1170579. FF is supported by UTFSM through Beca de Mantenci\'on. EO is partially supported by CONICYT through FONDECYT Project 11180193.}}
\author{Alejandro Allendes\thanks{Departamento de Matem\'atica, Universidad T\'ecnica Federico Santa Mar\'ia, Valpara\'iso, Chile.
(\email{alejandro.allendes@usm.cl}).}
\and
Francisco Fuica\thanks{Departamento de Matem\'atica, Universidad T\'ecnica Federico Santa Mar\'ia, Valpara\'iso, Chile.
(\email{francisco.fuica@sansano.usm.cl}).}
\and
Enrique Ot\'arola\thanks{Departamento de Matem\'atica, Universidad T\'ecnica Federico Santa Mar\'ia, Valpara\'iso, Chile.
(\email{enrique.otarola@usm.cl}, \url{http://eotarola.mat.utfsm.cl/}).}
}
\date{Draft version of \today.}
\begin{document}

\maketitle

\begin{abstract}
We consider a pointwise tracking optimal control problem for a semilinear elliptic partial differential equation. We derive the existence of optimal solutions and analyze first and, necessary and sufficient, second order optimality conditions. We devise two strategies of discretization to approximate \EO{a} solution of the optimal control problem: a semidiscrete scheme where the control variable is not discretized --  the so-called variational discretization approach -- and a fully discrete scheme where the control variable is discretized with piecewise constant functions. \EO{For both solution techniques}, we analyze convergence properties of discretizations and derive error estimates.
\end{abstract}

\begin{keywords}
optimal control, semilinear equations, Dirac measures, first and second order optimality conditions, finite element approximations, error estimates, maximum--norm estimates.
\end{keywords}

% REQUIRED
\begin{AMS}
35J61,          % Semilinear elliptic equations
35R06,         % PDEs with measure
49J20,   	   % Optimal control problems involving PDEs,
49K20,         % Problems involving partial differential equations
49M25,		   % Discrete approximations
65N15,         % Error bounds
65N30.         % Finite elements, Rayleigh-Ritz and Galerkin methods, finite methods
\end{AMS}

\section{Introduction}\label{sec:intro}
In this work we are interested in the analysis and discretization of a pointwise tracking optimal control problem for a semilinear elliptic partial differential equation (PDE). This PDE-constrained optimization problem entails the minimization of a cost functional that involves point evaluations of the state; control constrains are also considered. To make matters precise, we let $\Omega\subset\mathbb{R}^d$, with $d\in \{2, 3\}$, be an open, bounded, and convex polytope with  boundary $\partial\Omega$ and $\mathcal{D}$ be a finite ordered subset of $\Omega$ with cardinality $\#\mathcal{D}<\infty$. Given a set of desired states $\{y_t\}_{t\in \mathcal{D}}\subset\mathbb{R}$ and a regularization parameter $\alpha>0$, we define the cost functional
\begin{equation}\label{def:cost_func}
J(y,u):=\frac{1}{2}\sum_{t\in \mathcal{D}}\left(y(t)-y_t\right)^{2}+\frac{\alpha}{2}\|u\|_{L^2(\Omega)}^2.
\end{equation} 
The problem under consideration reads as follows: Find $\min J(y,u)$ subject to the \emph{monotone, semilinear, and elliptic PDE}
\begin{equation}\label{def:state_eq}
-\Delta y + a(\cdot,y)  =  u  \text{ in }  \Omega, \qquad
y  =  0  \text{ on }  \partial\Omega,
\end{equation}
and the \emph{control constraints}
\begin{equation}\label{def:box_constraints}
u \in \mathbb{U}_{ad},\qquad \mathbb{U}_{ad}:=\{v \in L^2(\Omega):  \texttt{a} \leq v(x) \leq \texttt{b} \text{ a.e.}~x \in \Omega \}.
\end{equation}
The control bounds $\texttt{a},\texttt{b} \in \mathbb{R}$ are such that $\texttt{a} < \texttt{b}$. Assumptions on the nonlinear function $a$ will be deferred until section \ref{sec:assumption}.

The study of a priori error estimates for finite element approximations of distributed semilinear optimal control problems has previously been considered in a number of works. To the best of our knowledge, the work \cite{MR1937089} appears to be the first to provide error estimates for such a class of problems; control constraints are considered. Within a general setting, the authors consider the cost functional $J(y,u):=\int_{\Omega} L(x,y(x),u(x)) \mathrm{d}x$, where $L$ satisfies the conditions stated in \cite[assumption \textbf{A2}]{MR1937089}, and devise finite element techniques to solve the underlying optimal control problem. To be precise, the authors
propose a fully discrete scheme on quasi-uniform meshes that discretize the control variable with piecewise constant functions and the state and adjoint variables with piecewise linear functions. Assuming that $\Omega \subset \mathbb{R}^d$, with $d \in \{2,3\}$, is a convex domain with a boundary $\partial \Omega$ of class $C^{1,1}$ and that the mesh-size is sufficiently small, the authors derive a priori estimates for the error approximation of \EO{an} optimal control variable in the $L^2(\Omega)$-norm \cite[Theorem 5.1]{MR1937089} and the $L^{\infty}(\Omega)$-norm \cite[Theorem 5.2]{MR1937089}. Since the publication of \cite{MR1937089}, several additional studies have enriched the understanding of error estimates for semilinear PDE-contrained optimization problems. We refer the reader to \cite{MR3586845} for references and also for an up-to-date discussion including linear approximation of the optimal control, the so-called variational discretization approach, superconvergence, post--processing schemes, and time dependent problems.

For the particular case $a \equiv 0$, there are several works available in the literature that provide a priori error estimates for finite element discretizations of \eqref{def:cost_func}--\eqref{def:box_constraints}. In two and three dimensions and utilizing that the associated adjoint variable belongs to $W_0^{1,r}(\Omega)$, for every $r < d/(d-1)$, the authors of \cite{MR3449612} obtain a priori and a posteriori error estimates for the so-called variational discretization approach; the state and adjoint equations are discretized with continuous piecewise linear functions. The a priori estimate for the error approximation of the control variable behaves as $\mathcal{O}(h^{2-d/2})$ \cite[Theorem 3.2]{MR3449612}. Later, the authors of \cite{MR3523574} analyze a fully discrete scheme that approximates the optimal state, adjoint, and control variables with piecewise linear functions and obtain a $\mathcal{O}(h)$ rate of convergence for the error approximation of the control variable in two dimensions \cite[Theorem 5.1]{MR3523574}. The authors also analyze the variational discretization scheme and derive error estimates \cite[Theorem 5.2]{MR3523574}. In \cite{MR3800041}, the authors invoke the theory of Muckenhoupt weights and weighted Sobolev spaces and analyze a numerical scheme that discretizes the control variable with piecewise constant functions;  the state and adjoint equations are 
discretized as above. In two and three dimensions, the authors derive estimates for the error approximation of the optimal control variable; the one in two dimensions being nearly-optimal in terms of approximation \cite[Theorem 4.3]{MR3800041}. In three dimensions the estimate is suboptimal: $\mathcal{O}(h^{1/2}|\log h|)$. This has been recently improved to $\mathcal{O}(h|\log h|)$ in \cite[Theorem 6.6]{MR3973329}. In addition, the authors of \cite{MR3973329} provide, in two dimensions, a $\mathcal{O}(h^2|\log h|^2)$ error estimate for the variational discretization approach \cite[Theorem 7.5]{MR3973329} and a post--processing scheme \cite[Theorem 7.12]{MR3973329}. \EO{We finally mention the works \cite{MR4013930,FOQ,Behringer} for extensions of the aforementioned results to the Stokes system and the work \cite{Otarola_semilinear_deltas} for the analysis and discretization of an optimal control problem for a semilinear PDE with a control variable that corresponds to the amplitude of forces modeled as point sources.} 

In contrast to the aforementioned advances and to the best of our knowledge, this exposition is the first one that studies approximation techniques for a pointwise tracking optimal control problem involving a semilinear elliptic PDE. In what follows we list what, we believe, are the main contributions of our work:
\begin{itemize}
\item[$\bullet$] \emph{Existence of an optimal control:} Assuming that $a = a(x,y)$ is a Carath\'eodory function that is monotone increasing and locally Lipschitz in $y$ with $a(\cdot,0) \in L^2(\Omega)$, we show that our control problem admits a solution (Theorem \ref{thm:existence_control}).
\item[$\bullet$] \emph{Optimality conditions:} Under additional assumptions on $a$, we obtain first order optimality conditions in Theorem \ref{thm:optimality_cond} and second order necessary and sufficient optimality conditions with a minimal gap in section \ref{sec:2nd_order}. Since the cost functional of our problem involves point evaluations of the state, we have that $\bar p \in W_0^{1,r}(\Omega) \setminus (H_0^1(\Omega) \cap C(\bar \Omega))$, where $r<d/(d-1)$. This requires a suitable adaption of the arguments available in the literature \cite[section 6]{MR3586845}, \cite{Troltzsch}: the arguments in \cite[section 6]{MR3586845} utilize that $\bar p \in W^{2,p}(\Omega)$ with $p>d$.
\item[$\bullet$] \emph{Convergence of discretization:}  
We prove the existence of subsequences of global solutions of suitable discrete problems that converge to a global solution of the continuous control problem.  We also prove that continuous  strict local solutions can be approximated by local minima of discrete problems.
\item[$\bullet$] \emph{Error estimates:}  We obtain a nearly--optimal local error estimate in $L^{\infty}$ for semilinear PDEs (Theorem \ref{thm:error_estimates_state}). This estimate is instrumental to derive error estimates for the solution schemes that we propose (Theorems \ref{thm:control_estimate}, \ref{thm:control_estimate_improved}, \ref{thm:control_estimate_var1}, and \ref{thm:control_estimate_var}). The analysis involves estimates in $L^{\infty}$-norm and $W^{1,p}$-spaces, combined with having to deal with first and second order optimality conditions. This subtle intertwining of ideas is one of the highlights of this contribution.
\end{itemize}

The outline of this manuscript is as follows. We set notation and introduce the functional framework we shall work with in section \ref{sec:nota_and_assum}. In section \ref{sec:tracking_ocp}, we analyze a weak version of the optimal control problem \eqref{def:cost_func}--\eqref{def:box_constraints}. In particular, we show existence of solutions and obtain first and second order optimality conditions. Section \ref{sec:fem} presents two finite element discretizations for \eqref{def:cost_func}--\eqref{def:box_constraints} and review some results related to the discretization of the state and adjoint equations. The convergence of discretizations is provided in section \ref{sec:conver_of_disc}. In section \ref{sec:error_estimates}, we derive estimates for the error approximation of an optimal control variable. We conclude in section \ref{sec:num_ex} by providing a numerical example that illustrates the theory.

%%%%%%%%%%%%%%%%%%%%%%%%%%%%%%%%%%%%%%%%%%%%%%%%%%%%%%%%%%%%
%%%%%%%%%%%%%%%%%%%%%%%%%%%%%%%%%%%%%%%%%%%%%%%%%%%%%%%%%%%%
%%%%%%%%%%%%%%%%%%%%%%%%%%%%%%%%%%%%%%%%%%%%%%%%%%%%%%%%%%%%
%%%%%%%%%%%%%%%%%%%%%%%%%%%%%%%%%%%%%%%%%%%%%%%%%%%%%%%%%%%%
%%%%%%%%%%%%%%%%%%%%%%%%%%%%%%%%%%%%%%%%%%%%%%%%%%%%%%%%%%%%
%%%%%%%%%%%%%%%%%%%%%%%%%%%%%%%%%%%%%%%%%%%%%%%%%%%%%%%%%%%%
%%%%%%%%%%%%%%%%%%%%%%%%%%%%%%%%%%%%%%%%%%%%%%%%%%%%%%%%%%%%
%%%%%%%%%%%%%%%%%%%%%%%%%%%%%%%%%%%%%%%%%%%%%%%%%%%%%%%%%%%%

\section{Notation and assumptions}\label{sec:nota_and_assum}

Let us set notation and describe the setting we shall operate with.

%%%%%%%%%%%%%%%%%%%%%%%%%%%%%%%%%%%%%%%%%%%%%%%%%%%%%%%%%%%%
%%%%%%%%%%%%%%%%%%%%%%%%%%%%%%%%%%%%%%%%%%%%%%%%%%%%%%%%%%%%
%%%%%%%%%%%%%%%%%%%%%%%%%%%%%%%%%%%%%%%%%%%%%%%%%%%%%%%%%%%%
%%%%%%%%%%%%%%%%%%%%%%%%%%%%%%%%%%%%%%%%%%%%%%%%%%%%%%%%%%%%

\subsection{Notation}\label{sec:notation}

Throughout this work $d \in \{2,3\}$ and $\Omega\subset\mathbb{R}^d$ is an open, bounded, and convex polytopal domain. If $\mathscr{X}$ and $\mathscr{Y}$ are Banach function spaces, we write $\mathscr{X}\hookrightarrow \mathscr{Y}$ to denote that $\mathscr{X}$ is continuously embedded in $\mathscr{Y}$. We denote by $\| \cdot \|_{\mathscr{X}}$ the norm of $\mathscr{X}$. Given $r \in (1,\infty)$, we denote by $r'$ its H\"older conjugate, i.e., the real number such that $1/r + 1/r' = 1$. The relation $\mathfrak{a} \lesssim \mathfrak{b}$ indicates that $\mathfrak{a} \leq C \mathfrak{b}$, with a positive constant that depends neither on $\mathfrak{a}$, $\mathfrak{b}$ nor on the discretization parameters. The value of $C$ might change at each occurrence.

%%%%%%%%%%%%%%%%%%%%%%%%%%%%%%%%%%%%%%%%%%%%%%%%%%%%%%%%%%%%
%%%%%%%%%%%%%%%%%%%%%%%%%%%%%%%%%%%%%%%%%%%%%%%%%%%%%%%%%%%%
%%%%%%%%%%%%%%%%%%%%%%%%%%%%%%%%%%%%%%%%%%%%%%%%%%%%%%%%%%%%
%%%%%%%%%%%%%%%%%%%%%%%%%%%%%%%%%%%%%%%%%%%%%%%%%%%%%%%%%%%%

\subsection{Assumptions}\label{sec:assumption}

We will operate under the following assumptions on the nonlinear function $a$. We notice, however, that some of the results that we present in this work are valid under less restrictive requirements. When possible we explicitly mention the assumptions on $a$ that are needed to obtain a particular result.

\begin{enumerate}[label=(A.\arabic*)]
\item \label{A1} $a:\Omega\times \mathbb{R}\rightarrow  \mathbb{R}$ is a Carath\'eodory function of class $C^2$ with respect to the second variable and $a(\cdot,0)\in L^{2}(\Omega)$.

\item \label{A2} $\frac{\partial a}{\partial y}(x,y)\geq 0$ for a.e.~$x\in\Omega$ and for all $y \in \mathbb{R}$.

\item \label{A3} For all $\mathsf{m}>0$, there exists a positive constant $C_{\mathsf{m}}$ such that 
\begin{equation*}
\sum_{i=1}^{2}\left|\frac{\partial^{i} a}{\partial y^{i} }(x,y)\right|\leq C_{\mathsf{m}}, 
\qquad
\left|\frac{\partial^{2} a}{\partial y^{2} }(x,v)- \frac{\partial^{2} a}{\partial y^{2} }(x,w)\right|\leq C_{\mathsf{m}} |v-w|
\end{equation*}
for a.e.~$x\in \Omega$ and $y,v,w \in [-\mathsf{m},\mathsf{m}]$.
\end{enumerate}

%%%%%%%%%%%%%%%%%%%%%%%%%%%%%%%%%%%%%%%%%%%%%%%%%%%%%%%%%%%%
%%%%%%%%%%%%%%%%%%%%%%%%%%%%%%%%%%%%%%%%%%%%%%%%%%%%%%%%%%%%
%%%%%%%%%%%%%%%%%%%%%%%%%%%%%%%%%%%%%%%%%%%%%%%%%%%%%%%%%%%%
%%%%%%%%%%%%%%%%%%%%%%%%%%%%%%%%%%%%%%%%%%%%%%%%%%%%%%%%%%%%

\subsection{State equation}\label{sec:state_equation}
Here, we collect some facts on problem \eqref{def:state_eq} that are well-known and will be used repeatedly. For $f \in L^q(\Omega)$, with $q > d/2$, we introduce the following weak formulation of problem \eqref{def:state_eq}:
Find $\mathsf{y}\in H_0^1(\Omega)$ such that
\begin{equation}\label{eq:weak_semilinear_pde}
(\nabla \mathsf{y},\nabla v)_{L^2(\Omega)}+(a(\cdot,\mathsf{y}),v)_{L^2(\Omega)}=(f,v)_{L^2(\Omega)} \quad \forall v\in H_0^1(\Omega).
\end{equation}

We begin with the following result that states the well-posedness of problem \eqref{eq:weak_semilinear_pde} and further regularity properties for its solution $\mathsf{y}$.

\begin{theorem}[well--posedness and regularity]\label{thm:well_posedness_semilinear}
Let $f \in L^q(\Omega)$ with $q>d/2$. Let $a = a(x,y) : \Omega \times \mathbb{R} \rightarrow \mathbb{R}$ be a Carath\'eodory function that is monotone increasing and locally Lipschitz in $y$ a.e.~in $\Omega$. If $\Omega$ denotes an open and bounded domain with Lipschitz boundary and $a(\cdot,0) \in L^q(\Omega)$, with $q>d/2$, then problem \eqref{eq:weak_semilinear_pde} has a unique solution $\mathsf{y} \in H_0^1(\Omega) \cap L^{\infty}(\Omega)$. If, in addition, $\Omega$ is convex and $f,a(\cdot,0) \in L^2(\Omega)$, then
$
\| \mathsf{y} \|_{H^{2}(\Omega)}
\lesssim
\|f-a(\cdot,0)\|_{L^{2}(\Omega)},
$
with a hidden constant that is independent of $a$ and $f$.
\end{theorem}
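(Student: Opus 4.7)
The plan is to handle the three claims in sequence. Set $g := f - a(\cdot, 0) \in L^q(\Omega)$ and $b(x,y) := a(x,y) - a(x,0)$; by hypothesis $b$ is Carath\'eodory, monotone increasing and locally Lipschitz in $y$ and satisfies $b(\cdot, 0) \equiv 0$, and \eqref{eq:weak_semilinear_pde} becomes $-\Delta \mathsf{y} + b(\cdot, \mathsf{y}) = g$ weakly. I will then proceed in three stages: well-posedness in $H_0^1(\Omega)$ by monotone operator theory, an $L^\infty$ bound by Stampacchia truncation, and the $H^2$ estimate by convex-polytope regularity of the Dirichlet Laplacian.

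For \emph{well-posedness in $H_0^1(\Omega)$} I would apply the Browder--Minty surjectivity theorem. Since $b$ has no prescribed global growth, I first introduce the symmetric truncation $b_M(x,y) := b(x, \mathrm{T}_M(y))$ at height $M$ and the operator $\mathcal{A}_M: H_0^1(\Omega) \to H^{-1}(\Omega)$ given by $\langle \mathcal{A}_M(y), v \rangle := (\nabla y, \nabla v)_{L^2} + (b_M(\cdot, y), v)_{L^2}$. This operator is hemicontinuous (Carath\'eodory plus boundedness of $b_M$ plus dominated convergence), monotone (sum of a linear monotone part and the monotone Nemytskii operator induced by $b_M$), and coercive on $H_0^1(\Omega)$ (coercivity is supplied by the Laplacian alone, using $(b_M(\cdot, y), y)_{L^2} \geq 0$ from $b_M$ and $y$ having the same sign). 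Browder--Minty yields a unique $\mathsf{y}_M \in H_0^1(\Omega)$ with $\mathcal{A}_M(\mathsf{y}_M) = g$, and the $L^\infty$ bound derived next is $M$-independent, so $\mathsf{y}_M$ solves the untruncated equation once $M$ exceeds it. Uniqueness of $\mathsf{y}$ follows by testing the difference equation with $\mathsf{y}_1 - \mathsf{y}_2$ and using monotonicity of $b(x, \cdot)$ together with Poincar\'e.

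For the \emph{$L^\infty$ bound} I would run a Stampacchia iteration. Testing the weak form with $v = (\mathsf{y} - k)^+ \in H_0^1(\Omega)$, $k \geq 0$, and observing that on $\{\mathsf{y} > k\}$ one has $b(\cdot, \mathsf{y}) \geq b(\cdot, 0) = 0$, the nonlinear term drops and
\[
\|\nabla (\mathsf{y} - k)^+\|_{L^2(\Omega)}^2 \leq (g, (\mathsf{y}-k)^+)_{L^2(\Omega)}.
\]
A standard Stampacchia iteration based on this inequality, Sobolev embedding, and H\"older on the super-level sets $A_k := \{\mathsf{y} > k\}$ closes precisely because $q > d/2$, producing $\|\mathsf{y}\|_{L^\infty(\Omega)} \lesssim \|g\|_{L^q(\Omega)}$. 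The negative part is controlled symmetrically via $v = -(\mathsf{y}+k)^-$.

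For the \emph{$H^2$ estimate} on convex polytopes, $\mathsf{y} \in L^\infty(\Omega)$ together with the local Lipschitz continuity of $a$ on $[-\|\mathsf{y}\|_{L^\infty}, \|\mathsf{y}\|_{L^\infty}]$ gives $b(\cdot, \mathsf{y}) \in L^2(\Omega)$, so $-\Delta \mathsf{y} = g - b(\cdot, \mathsf{y}) \in L^2(\Omega)$. The Grisvard-type regularity of the Dirichlet Laplacian on convex polytopes then supplies $\|\mathsf{y}\|_{H^2(\Omega)} \leq C_\Omega \|\Delta \mathsf{y}\|_{L^2(\Omega)}$, and combining with the energy estimate $\|\nabla \mathsf{y}\|_{L^2} \lesssim \|g\|_{L^2}$ (obtained by testing with $\mathsf{y}$ and discarding $(b(\cdot, \mathsf{y}), \mathsf{y})_{L^2} \geq 0$) plus the local-Lipschitz bound $\|b(\cdot, \mathsf{y})\|_{L^2} \lesssim \|\mathsf{y}\|_{L^2}$ closes the estimate. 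The hard part will be the $L^\infty$ step: without any global growth condition on $a$, the Stampacchia iteration must be interleaved with the truncation from step one and the resulting constants verified to be genuinely independent of $M$. A secondary subtlety is that the final $H^2$ constant, although independent of $f$ in the advertised sense, inevitably depends on the Lipschitz data of $a$ restricted to the a priori $L^\infty$ range of $\mathsf{y}$, which in turn is controlled through $\|g\|_{L^q(\Omega)}$.
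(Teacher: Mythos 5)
Your proposal is correct and follows essentially the same route as the paper, whose proof consists of citations to exactly the three tools you deploy: the Browder--Minty theorem for monotone operators (with a truncation to compensate for the absence of a growth condition), Stampacchia's truncation argument for the $L^\infty$ bound, and Grisvard-type convexity regularity for the $H^2$ estimate. The only caveat is the one you flag yourself: your $H^2$ bound carries the local Lipschitz constant of $a$ on the range of $\mathsf{y}$ and so does not literally deliver the asserted independence of the hidden constant from $a$ (the paper's citation-only proof does not address this point either, and that independence is not actually exploited later, where $a$ is fixed).
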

\begin{proof}
The existence of a unique solution $\mathsf{y} \in H_0^1(\Omega)\cap L^\infty(\Omega)$ follows from the main theorem on monotone operators \cite[Theorem 26.A]{MR1033498}, \cite[Theorem 2.18]{MR3014456} combined with an argument due to Stampacchia \cite{MR192177}, \cite[Theorem B.2]{MR1786735}. The $H^2(\Omega)$-regularity of $\mathsf{y}$ follows from the fact that $f,a(\cdot, 0) \in L^2(\Omega)$ and that $\Omega$ is convex; see \cite[Theorems 3.2.1.2 and 4.3.1.4]{MR775683} for $d=2$ and \cite[Theorems 3.2.1.2]{MR775683} and \cite[section 4.3.1]{MR2641539} for $d=3$.
\end{proof}

We conclude the section with the following Lipschitz property.

\begin{theorem}[Lipschitz property]
\label{thm:lipschitz_property}
Let $\Omega$ be an open and bounded domain with Lipschitz boundary and let $f_{1},f_{2} \in L^q(\Omega)$ with $q>d/2$. Let $a = a(x,y) : \Omega \times \mathbb{R} \rightarrow \mathbb{R}$ be a Carath\'eodory function of class $C^1$ with respect to $y$ such that \textnormal{\ref{A2}} holds. Assume that, for every $\mathsf{m}>0$, $|\partial a / \partial y (x,y)| \leq C_{\mathsf{m}}$ for a.e.~$x \in \Omega$ and $y \in [-\mathsf{m},\mathsf{m}]$. If $a(\cdot,0) \in L^q(\Omega)$, with $q>d/2$, then 
\begin{equation}\label{eq:lipschitz_prop}
\| \nabla (\mathsf{y}_{1}-\mathsf{y}_{2} ) \|_{L^2(\Omega)}
+
\|\mathsf{y}_{1}-\mathsf{y}_{2}\|_{L^\infty(\Omega)} 
\lesssim
\|f_{1}-f_{2}\|_{L^{q}(\Omega)},
\end{equation}
where $\mathsf{y}_{i} \in H_0^1(\Omega)\cap L^\infty(\Omega)$, with $i \in \{1,2\}$, solves  \eqref{eq:weak_semilinear_pde} with $f$ replaced by $f_{i}$.
\end{theorem}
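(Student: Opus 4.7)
The plan is to linearize the difference equation satisfied by $w := \mathsf{y}_1 - \mathsf{y}_2$ and then apply standard linear elliptic estimates with a non-negative bounded zeroth-order coefficient.

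First I would subtract the weak formulations \eqref{eq:weak_semilinear_pde} satisfied by $\mathsf{y}_1$ and $\mathsf{y}_2$ and write $a(x,\mathsf{y}_1) - a(x,\mathsf{y}_2) = b(x)\, w(x)$ with
\[
b(x) := \int_0^1 \frac{\partial a}{\partial y}\bigl(x,\mathsf{y}_2(x) + s\, w(x)\bigr)\, \mathrm{d}s,
\]
which is well defined since $a(x,\cdot) \in C^1$. Then $w \in H_0^1(\Omega)$ solves the linear problem
\[
(\nabla w,\nabla v)_{L^2(\Omega)} + (b\, w, v)_{L^2(\Omega)} = (f_1 - f_2, v)_{L^2(\Omega)} \qquad \forall v \in H_0^1(\Omega).
\]
Assumption \ref{A2} gives $b \geq 0$ a.e.~in $\Omega$. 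To ensure $b \in L^\infty(\Omega)$, I would invoke Theorem \ref{thm:well_posedness_semilinear} to deduce $\mathsf{y}_i \in L^\infty(\Omega)$, set $\mathpzc{m} := \max\{\|\mathsf{y}_1\|_{L^\infty(\Omega)},\|\mathsf{y}_2\|_{L^\infty(\Omega)}\}$, and then use the hypothesis $|\partial a/\partial y(x,y)| \leq C_{\mathpzc{m}}$ on $[-\mathpzc{m},\mathpzc{m}]$ to conclude $\|b\|_{L^\infty(\Omega)} \leq C_{\mathpzc{m}}$.

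For the gradient estimate I would test with $v = w$, drop the non-negative term $(bw,w)_{L^2(\Omega)}$, and use H\"older combined with the Sobolev embedding $H_0^1(\Omega) \hookrightarrow L^{q'}(\Omega)$, which is valid for $d \in \{2,3\}$ whenever $q > d/2$. This chain yields
\[
\|\nabla w\|_{L^2(\Omega)}^2 \leq \|f_1 - f_2\|_{L^q(\Omega)}\, \|w\|_{L^{q'}(\Omega)} \lesssim \|f_1 - f_2\|_{L^q(\Omega)}\, \|\nabla w\|_{L^2(\Omega)},
\]
and hence $\|\nabla w\|_{L^2(\Omega)} \lesssim \|f_1 - f_2\|_{L^q(\Omega)}$. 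For the maximum-norm bound I would then appeal to Stampacchia's truncation argument for the linear equation satisfied by $w$: with $b \in L^\infty(\Omega)$ non-negative, right-hand side in $L^q(\Omega)$, $q > d/2$, and bounded Lipschitz $\Omega$, one obtains $\|w\|_{L^\infty(\Omega)} \lesssim \|f_1 - f_2\|_{L^q(\Omega)}$, exactly as in the argument underlying Theorem \ref{thm:well_posedness_semilinear} (cf.~\cite[Theorem B.2]{MR1786735}).

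The main obstacle I anticipate is securing $b \in L^\infty(\Omega)$; once that is in hand, both inequalities reduce to the classical linear theory. Its boundedness hinges on the uniform $L^\infty$ a priori control of the two states $\mathsf{y}_i$, which permits choosing a single $\mathpzc{m}$ that activates the local bound on $\partial a/\partial y$ assumed in the statement of the theorem.
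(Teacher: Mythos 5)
Your argument is correct: the linearization $a(x,\mathsf{y}_1)-a(x,\mathsf{y}_2)=b\,w$ with $b\ge 0$ bounded (via the uniform $L^\infty$ bounds on $\mathsf{y}_1,\mathsf{y}_2$ from Theorem \ref{thm:well_posedness_semilinear}), followed by testing with $w$ and Stampacchia's truncation, is exactly the standard proof of this Lipschitz estimate. The paper does not give its own argument but simply cites \cite[Theorem 4.16]{Troltzsch}, whose proof proceeds along the same lines you describe, so there is nothing to add.
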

\begin{proof}
Estimate \eqref{eq:lipschitz_prop} follows from the arguments developed in the proof of \cite[Theorem 4.16]{Troltzsch} upon suitable modifying boundary conditions and invoking \cite[Theorem B.2]{MR1786735}; observe that the last observation in \cite[Appendix B]{MR1786735} guarantees that \cite[Theorem B.2]{MR1786735} holds for a forcing term in $L^q(\Omega)$ with $q>d/2$.
\end{proof}

%%%%%%%%%%%%%%%%%%%%%%%%%%%%%%%%%%%%%%%%%%%%%%%%%%%%%%%%%%%%
%%%%%%%%%%%%%%%%%%%%%%%%%%%%%%%%%%%%%%%%%%%%%%%%%%%%%%%%%%%%
%%%%%%%%%%%%%%%%%%%%%%%%%%%%%%%%%%%%%%%%%%%%%%%%%%%%%%%%%%%%
%%%%%%%%%%%%%%%%%%%%%%%%%%%%%%%%%%%%%%%%%%%%%%%%%%%%%%%%%%%%
%%%%%%%%%%%%%%%%%%%%%%%%%%%%%%%%%%%%%%%%%%%%%%%%%%%%%%%%%%%%
%%%%%%%%%%%%%%%%%%%%%%%%%%%%%%%%%%%%%%%%%%%%%%%%%%%%%%%%%%%%
%%%%%%%%%%%%%%%%%%%%%%%%%%%%%%%%%%%%%%%%%%%%%%%%%%%%%%%%%%%%
%%%%%%%%%%%%%%%%%%%%%%%%%%%%%%%%%%%%%%%%%%%%%%%%%%%%%%%%%%%%

\section{The pointwise tracking optimal control problem}
\label{sec:tracking_ocp}

In this section, we analyze the following weak version of problem \eqref{def:cost_func}--\eqref{def:box_constraints}: Find
\begin{equation}\label{def:weak_ocp}
\min \{ J(y,u): (y,u) \in H_0^1(\Omega)\cap L^\infty(\Omega) \times \mathbb{U}_{ad}\}
\end{equation}
subject to the monotone, semilinear and elliptic \emph{state equation}
\begin{equation}\label{eq:weak_st_eq}
(\nabla y,\nabla v)_{L^2(\Omega)}+(a(\cdot,y),v)_{L^2(\Omega)}=(u,v)_{L^2(\Omega)} \quad \forall v \in H_0^1(\Omega).
\end{equation}

Let $a = a(x,y) : \Omega \times \mathbb{R} \rightarrow \mathbb{R}$ be a Carath\'eodory function that is monotone increasing and locally Lipschitz in $y$ with $a(\cdot,0) \in L^2(\Omega)$. Since $\Omega$ is convex, Theorem \ref{thm:well_posedness_semilinear} yields the existence of a unique solution $y \in H^2(\Omega) \cap H_0^1(\Omega)$ of problem \eqref{eq:weak_st_eq}. We immediately notice that, in view of the continuous embedding $H^2(\Omega) \hookrightarrow C(\bar \Omega)$, which holds because $d \in \{2,3\}$, point evaluations of $y$ in \eqref{def:cost_func} are well-defined.

%%%%%%%%%%%%%%%%%%%%%%%%%%%%%%%%%%%%%%%%%%%%%%%%%%%%%%%%%%%%
%%%%%%%%%%%%%%%%%%%%%%%%%%%%%%%%%%%%%%%%%%%%%%%%%%%%%%%%%%%%
%%%%%%%%%%%%%%%%%%%%%%%%%%%%%%%%%%%%%%%%%%%%%%%%%%%%%%%%%%%%
%%%%%%%%%%%%%%%%%%%%%%%%%%%%%%%%%%%%%%%%%%%%%%%%%%%%%%%%%%%%

\subsection{Existence of optimal controls}
Let us introduce the control to state map $\mathcal{S}:L^2(\Omega) \rightarrow H_0^1(\Omega) \cap H^2(\Omega)$, which given a control $u$ associates to it the state $y$ that solves \eqref{eq:weak_st_eq}, and define $j:L^2(\Omega) \rightarrow \mathbb{R}$ by $j(u):= J(\mathcal{S}u,u)$. With these ingredients at hand, the existence of an optimal state-control pair $(\bar y, \bar u)$ is as follows.

\begin{theorem}[existence of an optimal pair]\label{thm:existence_control}
Let $\Omega$ be an open, bounded, and convex domain. Let $a = a(x,y) : \Omega \times \mathbb{R} \rightarrow \mathbb{R}$ be a Carath\'eodory function that is monotone increasing and locally Lipschitz in $y$ with $a(\cdot,0) \in L^2(\Omega)$. Thus, the optimal control problem \eqref{def:weak_ocp}--\eqref{eq:weak_st_eq} admits at least one solution $(\bar{y},\bar{u}) \in H_{0}^{1}(\Omega) \cap H^2(\Omega) \times \mathbb{U}_{ad}$.
\end{theorem}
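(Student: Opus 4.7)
The plan is a standard application of the direct method of the calculus of variations, with the regularity theory of Theorem \ref{thm:well_posedness_semilinear} doing most of the heavy lifting.

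First, I would note that $j(u) := J(\mathcal{S}u,u) \geq 0$, so $j$ is bounded below on $\mathbb{U}_{ad}$, which is nonempty. Thus, there exists a minimizing sequence $\{u_n\}_{n \in \mathbb{N}} \subset \mathbb{U}_{ad}$ with $j(u_n) \to \inf_{u \in \mathbb{U}_{ad}} j(u)$. Since $\mathbb{U}_{ad}$ is bounded in $L^\infty(\Omega)$, hence in $L^2(\Omega)$, and is convex and closed in $L^2(\Omega)$ (therefore weakly closed), I can extract a subsequence, not relabeled, such that $u_n \rightharpoonup \bar u$ in $L^2(\Omega)$ for some $\bar u \in \mathbb{U}_{ad}$.

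Next, set $y_n := \mathcal{S}u_n$ and $\bar y := \mathcal{S}\bar u$. Theorem \ref{thm:well_posedness_semilinear} yields
\begin{equation*}
\|y_n\|_{H^2(\Omega)} \lesssim \|u_n - a(\cdot,0)\|_{L^2(\Omega)} \leq C,
\end{equation*}
uniformly in $n$. Because $\Omega$ is convex (hence Lipschitz) and $d \in \{2,3\}$, the embedding $H^2(\Omega) \hookrightarrow C(\bar\Omega)$ is compact. Consequently, up to a further subsequence, $y_n \rightharpoonup y^\star$ in $H^2(\Omega)$ and $y_n \to y^\star$ uniformly on $\bar\Omega$ for some $y^\star \in H^2(\Omega) \cap H_0^1(\Omega)$. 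The key step is to identify $y^\star = \bar y$: passing to the limit in the weak formulation \eqref{eq:weak_st_eq} for $y_n$, the linear terms pass by weak convergence, while for the nonlinear term I would use the uniform bound $\|y_n\|_{L^\infty(\Omega)} \leq \mathpzc{m}$ (from the $H^2$-bound and Sobolev embedding) together with the local Lipschitz property of $a$ in $y$ to obtain
\begin{equation*}
\|a(\cdot,y_n) - a(\cdot,y^\star)\|_{L^2(\Omega)} \leq L_{\mathpzc{m}} \|y_n - y^\star\|_{L^2(\Omega)} \to 0.
\end{equation*}
This shows that $y^\star$ solves \eqref{eq:weak_st_eq} with datum $\bar u$, and uniqueness (Theorem \ref{thm:well_posedness_semilinear}) gives $y^\star = \bar y = \mathcal{S}\bar u$.

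Finally, I would pass to the limit in $J(y_n,u_n)$. For each $t \in \mathcal{D}$, the uniform convergence $y_n \to \bar y$ on $\bar\Omega$ implies $y_n(t) \to \bar y(t)$, so
\begin{equation*}
\tfrac{1}{2}\sum_{t \in \mathcal{D}} (y_n(t) - y_t)^2 \longrightarrow \tfrac{1}{2}\sum_{t \in \mathcal{D}} (\bar y(t) - y_t)^2.
\end{equation*}
The regularization term $u \mapsto \tfrac{\alpha}{2}\|u\|_{L^2(\Omega)}^2$ is convex and continuous, hence weakly lower semicontinuous on $L^2(\Omega)$, yielding $\|\bar u\|_{L^2(\Omega)}^2 \leq \liminf_{n\to\infty}\|u_n\|_{L^2(\Omega)}^2$. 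Combining these,
\begin{equation*}
J(\bar y,\bar u) \leq \liminf_{n \to \infty} J(y_n,u_n) = \inf_{u \in \mathbb{U}_{ad}} j(u),
\end{equation*}
so $(\bar y,\bar u)$ is a global solution. The only subtlety is the treatment of the semilinear term in the passage to the limit, but the compact embedding $H^2(\Omega) \hookrightarrow C(\bar\Omega)$ combined with the local Lipschitz assumption on $a$ handles it cleanly.
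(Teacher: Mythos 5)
Your proof is correct and follows essentially the same route as the paper: the direct method with a minimizing sequence, weak sequential compactness of $\mathbb{U}_{ad}$, weak lower semicontinuity of the convex regularization term, and continuity of the tracking term via the uniform $H^2(\Omega)$ bound and the compact embedding $H^2(\Omega)\hookrightarrow C(\bar{\Omega})$. The paper merely compresses the identification of the limit state and the passage to the limit into a citation of \cite[Theorem 4.15]{Troltzsch}, whereas you carry out those steps explicitly (and correctly) using the local Lipschitz property of $a$.
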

\begin{proof}
We follow the proof of \cite[Theorem 4.15]{Troltzsch} and define 
\begin{equation*}
\Phi: L^2(\Omega) \rightarrow \mathbb{R},
\quad
u  \mapsto \tfrac{\alpha}{2} \| u \|^2_{L^2(\Omega)},
\quad
\Psi: H^2(\Omega) \cap H_0^1(\Omega) \rightarrow \mathbb{R},
\quad
y \mapsto \tfrac{1}{2}\sum_{t\in\mathcal{D}} |y(t) - y_t|^2.
\end{equation*}
Since $J(y,u) = \Psi(y) + \Phi(u)$ is bounded from below, the infimum $\mathfrak{j}:=\inf \{ J(\mathcal{S}u,u), u\in\mathbb{U}_{ad} \}$ exists. Let $u_k \in \mathbb{U}_{ad}$ and $y_k=\mathcal{S}u_k$ be such that $J(y_k,u_k) \rightarrow \mathfrak{j}$ as $k \uparrow \infty$, i.e., $\{(y_k,u_k)\}_{k\in\mathbb{N}}$ is a minimizing sequence. Since $\mathbb{U}_{ad}$ is weakly sequentially compact in $L^2(\Omega)$, there exists a nonrelabeled subsequence $\{u_k\}_{k\in\mathbb{N}}$ such that $u_k\rightharpoonup \bar{u}$ in $L^2(\Omega)$ as $k\uparrow \infty$; $\bar{u}\in \mathbb{U}_{ad}$. On the other hand, since $\{ u_k\}_{k \in \mathbb{N}} \subset \mathbb{U}_{ad}$, Theorem \ref{thm:well_posedness_semilinear} yields the existence of $\mathsf{m}>0$ such that $|y_k(x)| \leq \mathsf{m}$ for a.e.~$x \in \Omega$ and $k \in \mathbb{N}$. This implies that $\{ a(\cdot,y_k) \}_{k \in \mathbb{N}}$ is bounded in $L^2(\Omega)$. We can thus conclude the existence of a nonrelabeled subsequence $\{y_k\}_{k\in\mathbb{N}}$ such that $y_k \rightharpoonup \bar{y}$ in $H^2(\Omega)\cap H_0^1(\Omega)$, which, in turns, implies that $y_k\to \bar{y}$ in $H_0^1(\Omega) \cap C(\bar{\Omega})$ as $k\uparrow \infty$; $\bar y$ being the natural candidate for an optimal state. Observe that $\bar y \in L^{\infty}(\Omega)$ and that $\| a(\cdot,y_k) - a(\cdot,\bar y) \|_{L^2(\Omega)} \leq \mathfrak{l}_{\mathsf{m}} \| y_k - \bar y \|_{L^2(\Omega)} \rightarrow 0$ as $k \uparrow \infty$. Following similar arguments to the ones elaborated in \cite[Theorem 4.15]{Troltzsch}, we can thus conclude that $\bar{y}$ solves \eqref{eq:weak_st_eq} with $u=\bar{u}$. Finally, in view of the fact that $\Phi$ is continuous and convex in $L^2(\Omega)$ we deduce that $J(\bar{y},\bar{u})=\mathfrak{j}$.
\end{proof}

%%%%%%%%%%%%%%%%%%%%%%%%%%%%%%%%%%%%%%%%%%%%%%%%%%%%%%%%%%%%%
%%%%%%%%%%%%%%%%%%%%%%%%%%%%%%%%%%%%%%%%%%%%%%%%%%%%%%%%%%%%%
%%%%%%%%%%%%%%%%%%%%%%%%%%%%%%%%%%%%%%%%%%%%%%%%%%%%%%%%%%%%%
%%%%%%%%%%%%%%%%%%%%%%%%%%%%%%%%%%%%%%%%%%%%%%%%%%%%%%%%%%%%%

\subsection{First order necessary optimality conditions}\label{sec:1st_order}

Since the optimal control problem \eqref{def:weak_ocp}--\eqref{eq:weak_st_eq} is not convex, we discuss optimality conditions in the context of local solutions in $L^2(\Omega)$ \cite[section 4.4.2]{Troltzsch}. In this section, we formulate first order necessary optimality conditions. To accomplish this task, we begin by analyzing differentiability properties of the control to state operator $\mathcal{S}$.

\begin{theorem}[differentiability properties of $\mathcal{S}$]
\label{thm:properties_C_to_S}
Assume that \textnormal{\ref{A1}}, \textnormal{\ref{A2}}, and \textnormal{\ref{A3}} hold. Then, $\mathcal{S}: L^{2}(\Omega) \rightarrow H_0^1(\Omega) \cap H^{2}(\Omega)$ is of class $C^2$. In addition, if $u,v\in L^{2}(\Omega)$, then $z=\mathcal{S}'(u)v\in H_0^1(\Omega) \cap H^{2}(\Omega)$ corresponds to the unique solution to 
\begin{equation}\label{eq:aux_adjoint}
(\nabla z,\nabla w)_{L^2(\Omega)}+\left(\tfrac{\partial a}{\partial y}(\cdot,y)z,w\right)_{L^2(\Omega)}=(v,w)_{L^2(\Omega)} \quad \forall w\in H_0^1(\Omega),
\end{equation}
where $y = \mathcal{S}u$. If $v_1,v_2\in L^{2}(\Omega)$, then $\mathfrak{z}=\mathcal{S}''(u)(v_1,v_2)\in H_0^1(\Omega) \cap H^{2}(\Omega)$ is the unique solution to 
\begin{equation}\label{eq:aux_adjoint_diff_2}
(\nabla \mathfrak{z},\nabla w)_{L^2(\Omega)}+\left(\tfrac{\partial a}{\partial y}(\cdot,y)\mathfrak{z},w\right)_{L^2(\Omega)}=-\left(\tfrac{\partial^2 a}{\partial y^2}(\cdot,y)z_{v_1}z_{v_2},w\right)_{L^2(\Omega)}
\end{equation}
for all $w\in H_0^1(\Omega)$, where $z_{v_i}=\mathcal{S}'(u)v_i$, with $i \in \{ 1,2 \}$, and $y = \mathcal{S}u$.
\end{theorem}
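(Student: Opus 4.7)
The plan is to apply the implicit function theorem to the map $F : (H^2(\Omega)\cap H_0^1(\Omega)) \times L^2(\Omega) \to L^2(\Omega)$ defined by $F(y,u) := -\Delta y + a(\cdot,y) - u$. First I would verify that $F$ is well-defined: for $d\in\{2,3\}$ the embedding $H^2(\Omega)\hookrightarrow C(\bar\Omega)$ holds, so every $y \in H^2(\Omega)\cap H_0^1(\Omega)$ takes values in some bounded interval $[-\mathpzc{m},\mathpzc{m}]$, and assumption \ref{A3} gives $a(\cdot,y)\in L^\infty(\Omega)\subset L^2(\Omega)$.

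Next I would establish that $F$ is of class $C^2$ between these Banach spaces. The $u$-dependence is affine, so the nontrivial content is the regularity of the Nemytskii operator $N : y \mapsto a(\cdot,y)$ from $H^2(\Omega)\cap H_0^1(\Omega)$ into $L^2(\Omega)$. Factoring $N$ through the embedding $H^2(\Omega)\hookrightarrow L^\infty(\Omega)$, assumptions \ref{A1} and \ref{A3} (the uniform bounds on $\partial_y a$ and $\partial_{yy} a$, and the local Lipschitz property of $\partial_{yy}a$ on bounded sets) are precisely what is required to show, by standard Nemytskii results, that $N$ is twice continuously differentiable with derivatives acting by pointwise multiplication:
\begin{equation*}
N'(y)z = \tfrac{\partial a}{\partial y}(\cdot,y)\,z,\qquad N''(y)(z_1,z_2) = \tfrac{\partial^2 a}{\partial y^2}(\cdot,y)\,z_1 z_2,
\end{equation*}
which again land in $L^2(\Omega)$ thanks to the $L^\infty$-bounds coming from \ref{A3} and the embedding $H^2(\Omega)\hookrightarrow L^\infty(\Omega)$.

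Then I would check that, for $\bar u \in L^2(\Omega)$ and $\bar y = \mathcal{S}\bar u$, the partial derivative
\begin{equation*}
F_y(\bar y,\bar u)\,z = -\Delta z + \tfrac{\partial a}{\partial y}(\cdot,\bar y)\,z
\end{equation*}
is a Banach space isomorphism from $H^2(\Omega)\cap H_0^1(\Omega)$ onto $L^2(\Omega)$. Given $f \in L^2(\Omega)$, the coefficient $\partial_y a(\cdot,\bar y)$ is bounded and nonnegative by \ref{A2}--\ref{A3}, so Theorem \ref{thm:well_posedness_semilinear} applied to the (linear, monotone) equation $-\Delta z + \partial_y a(\cdot,\bar y)\,z = f$ yields a unique solution in $H^2(\Omega)\cap H_0^1(\Omega)$ with the required estimate; invertibility then follows from the bounded inverse theorem. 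The implicit function theorem is now applicable and produces $\mathcal{S}\in C^2$ on a neighborhood of $\bar u$, hence on all of $L^2(\Omega)$.

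The characterizations \eqref{eq:aux_adjoint} and \eqref{eq:aux_adjoint_diff_2} are obtained by differentiating the identity $F(\mathcal{S}u,u)=0$ in $u$: the first derivative gives $F_y(\mathcal{S}u,u)\mathcal{S}'(u)v + F_u(\mathcal{S}u,u)v = 0$, i.e.\ $-\Delta z + \partial_y a(\cdot,y)\,z = v$ for $z=\mathcal{S}'(u)v$, which in weak form is precisely \eqref{eq:aux_adjoint}; differentiating again and using $N''$ yields \eqref{eq:aux_adjoint_diff_2}. The step I expect to be the most delicate is verifying that $N''$ is continuous as a map into $L^2(\Omega)$, i.e.\ controlling $\|\partial_{yy}a(\cdot,y_1) - \partial_{yy}a(\cdot,y_2)\|_{L^\infty}$ by $\|y_1-y_2\|_{H^2}$; this is exactly where the uniform local Lipschitz bound on $\partial_{yy}a$ in \ref{A3}, together with $H^2(\Omega)\hookrightarrow L^\infty(\Omega)$, is needed.
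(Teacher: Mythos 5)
Your proposal is correct and takes essentially the same route as the paper: the paper also reduces everything to the implicit function theorem applied to $F(y,u)=-\Delta y + a(\cdot,y)-u$ on $(H^2(\Omega)\cap H_0^1(\Omega))\times L^2(\Omega)$, merely outsourcing the verification of the hypotheses (Nemytskii $C^2$-regularity under \ref{A1}--\ref{A3}, and the isomorphism property of the linearized operator) to Tr\"oltzsch's Theorems 4.17 and 4.24 and to Casas--Tr\"oltzsch. One small inaccuracy worth fixing: since \ref{A1} only gives $a(\cdot,0)\in L^2(\Omega)$, assumption \ref{A3} yields $a(\cdot,y)-a(\cdot,0)\in L^\infty(\Omega)$ and hence $a(\cdot,y)\in L^2(\Omega)$, but not $a(\cdot,y)\in L^\infty(\Omega)$ as you wrote; this does not affect the argument, because membership in $L^2(\Omega)$ is all that is needed for $F$ to be well defined.
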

\begin{proof}
The first order Fr\'echet differentiability of $\mathcal{S}$ from $L^2(\Omega)$ into $H_0^1(\Omega) \cap H^2(\Omega)$ follows from a slight modification of the arguments of \cite[Theorem 4.17]{Troltzsch} that basically entails replacing $H^1(\Omega) \cap C(\bar \Omega)$ by $H_0^1(\Omega) \cap H^2(\Omega)$ and $L^r(\Omega)$ by $L^2(\Omega)$. \cite[Theorem 4.17]{Troltzsch} also yields that $z=\mathcal{S}'(u)v\in H^{2}(\Omega)\cap H_0^1(\Omega)$ corresponds to the unique solution to \eqref{eq:aux_adjoint}. The second order Fr\'echet differentiability of $\mathcal{S}$ can be obtained by using the implicit function theorem; see, for instance, the proof of \cite[Theorem 4.24]{Troltzsch} and \cite[Proposition 16]{MR3586845} for details.
\end{proof}

We begin the study of optimality conditions with a basic result. If $\bar{u}\in \mathbb{U}_{ad}$ denotes a locally optimal control for problem \eqref{def:weak_ocp}--\eqref{eq:weak_st_eq}, then \cite[Lemma 4.18]{Troltzsch}
\begin{equation}
\label{eq:variational_inequality}
j'(\bar u) (u - \bar u) \geq 0 \quad \forall u \in \mathbb{U}_{ad}.
\end{equation}
We recall that, for $u \in \mathbb{U}_{ad}$, $j(u) = J (\mathcal{S}u,u)$. In \eqref{eq:variational_inequality}, $j'(\bar{u})$ denotes the Gate\^aux derivative of  $j$ at $\bar{u}$. To explore \eqref{eq:variational_inequality}, we introduce the \emph{adjoint variable} $p \in W_{0}^{1,r}(\Omega)$, with $r\in (1,d/(d-1))$, as the unique solution to the \emph{adjoint equation}
\begin{equation}\label{eq:adj_eq}
(\nabla w,\nabla p)_{L^2(\Omega)} + \left(\tfrac{\partial a}{\partial y}(\cdot,y)p,w\right)_{L^2(\Omega)} = \sum_{t\in\mathcal{D}}\langle (y(t) - y_{t})\delta_{t},w\rangle\quad \forall w \in W_{0}^{1,r'}(\Omega).
\end{equation}
Here, $\langle \cdot, \cdot \rangle$ denotes the duality pairing between $W^{-1,r}(\Omega):= W^{1,r'}_0(\Omega)'$ and $W^{1,r'}_0(\Omega)$, $\delta_{t}$ corresponds to the Dirac delta supported at the interior point $t \in \Omega$, and $y = \mathcal{S}u$ corresponds to the solution to \eqref{eq:weak_st_eq}; observe that $r'>d$. We notice that, in view of assumptions \ref{A1}--\ref{A3}, problem \eqref{eq:adj_eq} is well--posed; see \cite[Theorem 1]{MR812624}.

We are now in position to present first order necessary optimality conditions.

\begin{theorem}[first order necessary optimality conditions]
\label{thm:optimality_cond}
Assume that \textnormal{\ref{A1}}, \textnormal{\ref{A2}}, and \textnormal{\ref{A3}} hold. Then every locally optimal control $\bar{u} \in \mathbb{U}_{ad}$ for problem \eqref{def:weak_ocp}--\eqref{eq:weak_st_eq} satisfies the variational inequality
\begin{equation}\label{eq:var_ineq}
(\bar{p}+\alpha \bar{u},u-\bar{u})_{L^2(\Omega)}\geq 0 \quad \forall u\in \mathbb{U}_{ad},
\end{equation}
where $\bar p \in W_{0}^{1,r}(\Omega)$, with $r < d/(d-1)$, solves \eqref{eq:adj_eq} with $y$ replaced by $\bar{y} = \mathcal{S} \bar u$.
\end{theorem}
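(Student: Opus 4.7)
The plan is to deduce the optimality condition \eqref{eq:var_ineq} from the abstract variational inequality \eqref{eq:variational_inequality} by means of a duality argument that rewrites $j'(\bar u)(u-\bar u)$ in terms of the adjoint state $\bar p$. First I would apply the chain rule together with Theorem \ref{thm:properties_C_to_S} to compute, for every $u\in\mathbb{U}_{ad}$,
\[
j'(\bar u)(u-\bar u)=\sum_{t\in\mathcal{D}}(\bar y(t)-y_t)\,z(t)+\alpha(\bar u,u-\bar u)_{L^2(\Omega)},
\]
where $z=\mathcal{S}'(\bar u)(u-\bar u)\in H^2(\Omega)\cap H_0^1(\Omega)$ solves the linearized state equation \eqref{eq:aux_adjoint} with $v=u-\bar u$ and $y=\bar y$. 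Point evaluations $z(t)$ are well-defined thanks to the embedding $H^2(\Omega)\hookrightarrow C(\bar\Omega)$, valid for $d\in\{2,3\}$.

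The second step is a pair of dual identities that together identify the sum of point evaluations with a distributed $L^2$ pairing. Since $z\in H^2(\Omega)\cap H_0^1(\Omega)\hookrightarrow W_0^{1,r'}(\Omega)$ for a suitable $r'>d$, I would use $w=z$ as a test function in the adjoint equation \eqref{eq:adj_eq} to obtain
\[
(\nabla z,\nabla\bar p)_{L^2(\Omega)}+\left(\tfrac{\partial a}{\partial y}(\cdot,\bar y)\bar p,z\right)_{L^2(\Omega)}=\sum_{t\in\mathcal{D}}(\bar y(t)-y_t)\,z(t).
\]
On the other hand, because $z\in H^2(\Omega)\cap H_0^1(\Omega)$, the equation \eqref{eq:aux_adjoint} holds in the strong sense $-\Delta z+\tfrac{\partial a}{\partial y}(\cdot,\bar y)z=u-\bar u$ a.e.~in $\Omega$, so integration by parts against $\bar p\in W_0^{1,r}(\Omega)$, whose trace vanishes, yields
\[
(\nabla z,\nabla\bar p)_{L^2(\Omega)}+\left(\tfrac{\partial a}{\partial y}(\cdot,\bar y)z,\bar p\right)_{L^2(\Omega)}=(u-\bar u,\bar p)_{L^2(\Omega)}.
\]
Subtracting the two identities leaves $\sum_{t\in\mathcal{D}}(\bar y(t)-y_t)z(t)=(\bar p,u-\bar u)_{L^2(\Omega)}$, and substituting this back into the chain rule expression converts \eqref{eq:variational_inequality} into \eqref{eq:var_ineq}.

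The main obstacle will be justifying those two pairings in view of the mismatch of regularities: since $\bar p$ only lies in $W_0^{1,r}(\Omega)$ with $r<d/(d-1)<2$, it is \emph{not} an admissible test function in the native $H_0^1(\Omega)$ formulation of \eqref{eq:aux_adjoint}. The remedy is to exploit the $H^2$-regularity of $z$ granted by Theorem \ref{thm:properties_C_to_S}: this yields $\nabla z\in L^{r'}(\Omega)$ for every finite $r'$ when $d=2$ and for every $r'\leq 6$ when $d=3$, hence for a range of $r'>d$ dual to the admissible $r$. This integrability, combined with the vanishing trace of $\bar p$ and the boundedness of $\tfrac{\partial a}{\partial y}(\cdot,\bar y)$ from assumption \ref{A3}, turns each term above into a bona fide $L^{r'}$--$L^{r}$ duality, so the integration by parts (equivalently, a density argument extending the validity of \eqref{eq:aux_adjoint} from $H_0^1(\Omega)$ to $W_0^{1,r}(\Omega)$) is legitimate. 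I would verify these compatibility conditions explicitly before combining the two identities to conclude.
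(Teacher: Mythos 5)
Your proposal is correct and follows essentially the same route as the paper: compute $j'(\bar u)(u-\bar u)$ via the chain rule, test the adjoint equation with $z=\mathcal{S}'(\bar u)(u-\bar u)\in H^2(\Omega)\cap H_0^1(\Omega)\hookrightarrow W_0^{1,r'}(\Omega)$, and justify the pairing of the linearized state equation with $\bar p\in W_0^{1,r}(\Omega)$ by a density argument. The paper carries out that last justification explicitly by approximating $\bar p$ with smooth $p_n$ and passing to the limit term by term, which is the same mechanism you invoke when you legitimize the integration by parts.
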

\begin{proof}
A simple computation reveals that \eqref{eq:variational_inequality} can be rewritten as follows:
\begin{equation}\label{eq:derivative_cost}
\sum_{t\in\mathcal{D}}\left(\mathcal{S}\bar{u}(t)-y_t\right)\cdot\mathcal{S}'(\bar{u})(u-\bar{u})(t)+\alpha(\bar{u},u-\bar{u})_{L^2(\Omega)}\geq 0 \qquad \forall u\in\mathbb{U}_{ad}.
\end{equation}

Let us concentrate on the first term of the left hand side of \eqref{eq:derivative_cost}.  Define $z:=\mathcal{S}'(\bar{u})(u-\bar{u})$. Since $u, \bar u \in L^2(\Omega)$, the results of Theorem \ref{thm:properties_C_to_S} guarantees that
$z\in H^2(\Omega) \cap H_0^1(\Omega) \hookrightarrow W_{0}^{1,\mathfrak{q}}(\Omega)$ for every $\mathfrak{q} < 2d/(d-2)$ \cite[Theorem 4.12]{MR2424078}. In particular, since $2d/(d-2) > d$, we have that $z \in W_0^{1,\mathfrak{q}}(\Omega)$ for every $\mathfrak{q} \in (d,2d/(d-2))$. We are thus able to set $w=z$ as a test function in the adjoint problem \eqref{eq:adj_eq} to obtain
\begin{equation}\label{eq:adj_eq_w=z}
(\nabla z,\nabla \bar{p})_{L^2(\Omega)} + \left(\tfrac{\partial a}{\partial y}(\cdot,\bar{y})\bar{p},z\right)_{L^2(\Omega)} = \sum_{t\in\mathcal{D}} (\bar{y}(t) - y_{t})\cdot z(t).
\end{equation}

On the other hand, we would like to set $w = \bar p$ in the problem that  $z=\mathcal{S}'(\bar{u})(u-\bar{u})$ solves. If that were possible, we would obtain
\begin{equation}\label{eq:aux_adjoint_2}
(\nabla z,\nabla \bar{p})_{L^2(\Omega)}+\left(\tfrac{\partial a}{\partial y}(\cdot,\bar{y})z,\bar{p}\right)_{L^2(\Omega)}=(u-\bar{u},\bar{p})_{L^2(\Omega)}.
\end{equation}
However, since $\bar p \in W_0^{1,r}(\Omega)$ with $r < d/(d-1)$, we have that $\bar p  \not\in  H_0^1(\Omega)$ so that \eqref{eq:aux_adjoint_2} must be justified by different means. Let $\{p_n\}_{n\in\mathbb{N}} \subset C_0^{\infty}(\Omega)$ be such that $p_n \rightarrow \bar{p}$ in $W_{0}^{1,r}(\Omega)$ for every $r < d/(d-1)$. Setting $w = p_n$, with $n \in\mathbb{N}$, in the problem that $z=\mathcal{S}'(\bar{u})(u-\bar{u})$ solves yields
\begin{equation}
(\nabla z,\nabla p_n)_{L^2(\Omega)}+\left(\tfrac{\partial a}{\partial y}(\cdot,\bar{y})z,p_n\right)_{L^2(\Omega)}=(u-\bar{u},p_n)_{L^2(\Omega)}.
\label{eq:aux_first_order}
\end{equation}
Since
$
|(u-\bar{u},\bar{p})_{L^2(\Omega)}-(u-\bar{u},p_n)_{L^2(\Omega)}|
\leq \|u-\bar{u}\|_{L^\infty(\Omega)}\| \bar{p}-p_n \|_{L^1(\Omega)} \rightarrow 0
$
as $n \uparrow \infty$, the right hand side of \eqref{eq:aux_first_order} converges to $(u-\bar{u}, \bar p)_{L^2(\Omega)}$. The existence of $\mathsf{m}>0$ such that $| \bar y (x)| \leq \mathsf{m}$ for a.e.~$x \in \Omega$, $z \in H^2(\Omega)$, and \ref{A3} reveal that, as $n \uparrow \infty$,
\begin{equation*}
\left|\left(\tfrac{\partial a}{\partial y}(\cdot,\bar{y})z,\bar{p}\right)_{L^2(\Omega)} - \left(\tfrac{\partial a}{\partial y}(\cdot,\bar{y})z,p_n\right)_{L^2(\Omega)}\right|\leq 
C_{\mathsf{m}} \| z \|_{L^{\infty}(\Omega)} \| \bar{p}-p_n \|_{L^1(\Omega)} \rightarrow 0.
\end{equation*}
Finally,
$
|(\nabla z, \nabla (\bar{p}-p_n))_{L^2(\Omega)}| \leq \| \nabla z \|_{L^{r'}(\Omega)} \|  \nabla (\bar{p}-p_n) \|_{L^{r}(\Omega)} \rightarrow 0
$
as $n \uparrow \infty$ ($r < \frac{d}{d-1}$).

The desired variational inequality \eqref{eq:var_ineq} follows from \eqref{eq:derivative_cost}, \eqref{eq:adj_eq_w=z}, and \eqref{eq:aux_adjoint_2}.
\end{proof}

We present the following projection formula. If $\bar u$ denotes a locally optimal control for problem \eqref{def:weak_ocp}--\eqref{eq:weak_st_eq}, then \cite[section 4.6]{Troltzsch}
\begin{equation}\label{eq:projection_control} 
\bar{u}(x):=\Pi_{[\texttt{a},\texttt{b}]}(-\alpha^{-1}\bar{p}(x)) \textrm{ a.e.}~x \in \Omega,
\end{equation}
where $\Pi_{[\texttt{a},\texttt{b}]} : L^1(\Omega) \rightarrow  \mathbb{U}_{ad}$ is defined by $\Pi_{[\texttt{a},\texttt{b}]}(v) := \min\{ \texttt{b}, \max\{ v, \texttt{a}\} \}$ a.e.~in  $\Omega$. We immediately notice that, since $\bar{p}\in W^{1,r}(\Omega)$ for every $r < d/(d-1)$, then $\bar u \in W^{1,r}(\Omega)$ \cite[Theorem A.1]{MR1786735}; see also \cite[Theorem 4.1]{MR2674627}.

The next result further \FF{illustrates} regularity properties of $\bar{p}$;  see \cite[Theorem 3.4]{MR2434067}. \EO{To present it, we follow \cite{MR3973329} and introduce}
\begin{equation}\label{eq:set_not_zero}
\FF{\mathcal{E}:=\{t\in \mathcal{D}: \bar{y}(t)\neq y_{t}\}.}
\end{equation}
\EO{Observe that, if $t \in \mathcal{D} \setminus \mathcal{E}$, then the coefficient $\bar{y}(t) - y_{t}$ in \eqref{eq:adj_eq} vanishes.}

\begin{theorem}[local regularity]\label{thm:reg_adj_state_local}
Assume that \textnormal{\ref{A1}}, \textnormal{\ref{A2}}, and \textnormal{\ref{A3}} hold. Let $\bar{p}\in W_0^{1,r}(\Omega)$, with $r<d/(d-1)$, be the solution to \eqref{eq:adj_eq} with $y=\bar{y}=\mathcal{S}\bar{u}$. Then, 
\begin{equation}\label{eq:regularity_adjoint}
\bar{p}\in H^2(\Omega\setminus \cup_{t\in\FF{\mathcal{E}}}\bar{B_t})\cap C^{0,1}(\bar \Omega\setminus \cup_{t\in\FF{\mathcal{E}}} B_t).
\end{equation}
Here, $B_t \subset \Omega$ denotes a suitable open ball centered at $t\in \FF{\mathcal{E}}$ of strictly positive radius.
\end{theorem}
\begin{proof}
\EO{Let us assume that $\mathcal{E}\neq \emptyset$; observe that if $\mathcal{E}=\emptyset$, then $\bar{p}\equiv 0$.} We follow the proof in \cite[Lemma 5.2]{MR3973329}. Let $t\in\FF{\mathcal{E}}$ and let $B_{t}$ and $C_t$ be open balls centered at $t$ such that $B_{t} \Subset C_t$. Let $\gamma:\Omega \to [0,1]$ be a smooth function that satisfies 
$
\gamma \equiv 1
$ 
in 
$\Omega \setminus \cup_{t\in\FF{\mathcal{E}}}\bar{C_t}$
and 
$
\gamma \equiv 0
$ 
on
$
\cup_{t\in\FF{\mathcal{E}}}B_t.
$
Define $\Upsilon := \gamma\bar{p}$. Basic computations, on the basis of the fact that $\bar p$ solves  \eqref{eq:adj_eq}, reveal that
\begin{equation}
(\nabla \Upsilon, \nabla w)_{L^2(\Omega)} = -\left(\tfrac{\partial a}{\partial y}(\cdot,\bar{y})\bar{p} \gamma,w\right)_{L^2(\Omega)} - 2(\nabla\bar{p}\cdot\nabla \gamma, w)_{L^2(\Omega)} -(\bar{p}\Delta \gamma,w)_{L^2(\Omega)}
\label{eq:Upsilon}
\end{equation}
for all $w\in W_0^{1,r'}(\Omega)$. Denote by $\mathfrak{f}$ the forcing term of problem \eqref{eq:Upsilon}. Since $\gamma$ is smooth, $a$ satisfies \ref{A3}, and $\bar{p}\in W_0^{1,r}(\Omega)$ for every $r<d/(d-1)$, we conclude that $\mathfrak{f} \in L^r(\Omega)$. The convexity of $\Omega$ thus guarantees that $\Upsilon\in W^{2,r}(\Omega)$ for $r<d/(d-1)$. This, the smoothness of $\gamma$, and the relation $\Upsilon = \gamma \bar p$, allow us to obtain that $\bar{p}\in W^{2,r}(\Omega\setminus \cup_{t\in\FF{\mathcal{E}}}\bar{B_t}) \hookrightarrow H^1(\Omega\setminus \cup_{t\in\FF{\mathcal{E}}}\bar{B_t})$. This regularity property and the underlying construction of $\gamma$ guarantee that $\mathfrak{f}$ now belongs to $L^2(\Omega)$. Consequently, the convexity of $\Omega$ yields $\bar{p}\in H^2(\Omega\setminus \cup_{t\in\FF{\mathcal{E}}} \bar{B_t})$. This, on the basis of a Sobolev embedding reveals that $\bar p \in W^{1,6}(\Omega\setminus \cup_{t\in\FF{\mathcal{E}}} \bar{B_t})$. As a result, there exists $s>d$ such that $\mathfrak{f} \in L^s(\Omega)$. Invoke \cite[Lemma 4.1]{MR3973329} to arrive at $\bar p \in C^{0,1}(\bar \Omega\setminus \cup_{t\in\FF{\mathcal{E}}} B_t )$. This concludes the proof.
\end{proof}

We now present a regularity results for a locally optimal control $\bar u$.

\begin{theorem}[extra regularity of $\bar{u}$]\label{thm:extra_regul_control}
Suppose that assumptions \textnormal{\ref{A1}}, \textnormal{\ref{A2}}, and \textnormal{\ref{A3}} hold. Then, every locally optimal control $\bar{u}$ belongs to $H^1(\Omega) \cap C^{0,1}(\bar{\Omega})$. 
\end{theorem}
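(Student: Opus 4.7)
The proof will rely on the projection formula \eqref{eq:projection_control} and the $1$-Lipschitz character of $\Pi_{[\texttt{a},\texttt{b}]}$. Let $\mathcal{A} := \{ x \in \Omega : \texttt{a} < -\alpha^{-1}\bar{p}(x) < \texttt{b}\}$ denote the inactive set; on $\Omega \setminus \mathcal{A}$ the control $\bar{u}$ is identically equal to $\texttt{a}$ or $\texttt{b}$, whereas on $\mathcal{A}$ we have $\bar{u} = -\alpha^{-1}\bar{p}$, so the regularity of $\bar{u}$ is dictated by that of $\bar{p}$ on $\mathcal{A}$. The plan is to (i) isolate the points of $\mathcal{D}$ at which $\bar{p}$ genuinely blows up, (ii) argue that $\mathcal{A}$ stays a positive distance away from those points, and (iii) invoke elliptic regularity to secure Lipschitz control of $\bar{p}$ on the relevant closed subset of $\overline{\Omega}$.

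Set $\mathcal{D}^* := \{ t \in \mathcal{D} : \bar{y}(t) \neq y_t \}$. A local Green's-function decomposition of \eqref{eq:adj_eq}, writing $\bar{p}$ near each $t \in \mathcal{D}^*$ as $(\bar{y}(t)-y_t) G^t + q$ where $G^t$ is the Newtonian kernel at $t$ and $q$ satisfies an auxiliary problem with datum in $L^s$ for some $s > d/2$, yields $|\bar{p}(x)|\to\infty$ as $x\to t$ for every $t \in \mathcal{D}^*$; for $t \in \mathcal{D}\setminus\mathcal{D}^*$ the corresponding coefficient in \eqref{eq:adj_eq} vanishes and no singularity is produced at $t$. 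Setting $M := \alpha\max\{|\texttt{a}|,|\texttt{b}|\}$, I can therefore select $\varepsilon > 0$ with $\overline{N}\subset\Omega$, where $N := \bigcup_{t\in\mathcal{D}^*} B(t,\varepsilon)$, such that $|\bar{p}(x)| > M$ on $N$. By \eqref{eq:projection_control}, $\bar{u}$ is then identically $\texttt{a}$ or $\texttt{b}$ on each ball of $N$, and $\mathcal{A}\subset\Omega_\varepsilon := \Omega\setminus\overline{N}$. On $\Omega_\varepsilon$ the right hand side of \eqref{eq:adj_eq} vanishes, so $\bar{p}$ solves $-\Delta\bar{p}+c\bar{p}=0$ with $c := \tfrac{\partial a}{\partial y}(\cdot,\bar{y})\in L^\infty(\Omega)$ by \textnormal{\ref{A3}}. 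A potential-theoretic argument provides $L^\infty_{\mathrm{loc}}$ bounds for $\bar{p}$ on $\Omega\setminus\mathcal{D}^*$, and a standard bootstrap then delivers $\bar{p}\in W^{2,q}_{\mathrm{loc}}(\Omega\setminus\mathcal{D}^*)$ for every finite $q$. Combined with the boundary regularity theory on the convex polytope $\Omega$, this yields $\bar{p}\in W^{2,q}(\Omega_\varepsilon)\hookrightarrow C^{1,\gamma}(\overline{\Omega_\varepsilon})$ for some $q > d$ and $\gamma > 0$, so $\bar{p}$ is Lipschitz on $\overline{\Omega_\varepsilon}$.

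The $1$-Lipschitz character of $\Pi_{[\texttt{a},\texttt{b}]}$ transfers Lipschitz regularity from $-\alpha^{-1}\bar{p}$ to $\bar{u}$ on $\overline{\Omega_\varepsilon}$, while $\bar{u}$ is constant (hence Lipschitz) on each ball of $N$; continuity across the interfaces $\partial B(t,\varepsilon)$ holds because both pieces agree with $\texttt{a}$ or $\texttt{b}$ at the interface. A routine gluing argument, using the convexity of $\overline{\Omega}$, yields $\bar{u}\in C^{0,1}(\overline{\Omega})$, and the embedding $C^{0,1}(\overline{\Omega})\hookrightarrow H^1(\Omega)$ on the bounded domain $\Omega$ finishes the proof. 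I expect the most delicate step to be the Green's-function decomposition proving pointwise blow-up of $\bar{p}$ near each $t \in \mathcal{D}^*$, together with the verification of $W^{2,q}$ regularity up to the polytopal boundary of $\Omega_\varepsilon$.
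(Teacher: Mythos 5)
Your strategy is the same one the paper delegates to its references \cite[Lemma 3.3]{MR3264224}, \cite[Theorem 3.4]{MR2434067}: combine the projection formula \eqref{eq:projection_control} with a Green's-function splitting of $\bar p$ near the points $t\in\mathcal{D}$ with $\bar y(t)\neq y_t$, conclude from the blow-up of $|\bar p|$ that $\bar u$ saturates at $\texttt{a}$ or $\texttt{b}$ on small balls around those points, and use elliptic regularity for $\bar p$ on the complement. The interior part of your argument is sound (the remainder $q$ is indeed continuous near each $t$ since $\tfrac{\partial a}{\partial y}(\cdot,\bar y)G^t\in L^s$ for some $s>d/2$, and the sign of $\bar p$ is constant on each punctured ball, so $\bar u$ is a single constant there), as is the gluing step: on $\partial B(t,\varepsilon)$ both descriptions of $\bar u$ return the same bound value, and along a straight segment, available by convexity of $\Omega$, the Lipschitz constants of the pieces concatenate.

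The step that fails as written is the claim $\bar p\in W^{2,q}(\Omega_\varepsilon)\hookrightarrow C^{1,\gamma}(\overline{\Omega_\varepsilon})$ for some $q>d$. Convexity of the polytope only guarantees $H^2$ regularity up to the boundary; $W^{2,q}$ regularity requires the corner (and, for $d=3$, edge) singular exponents $\lambda=\pi/\omega$ to exceed $2-2/q$. For $d=2$ this is salvageable, since every interior angle $\omega<\pi$ gives $\lambda>1$ and hence some admissible $q>2$. For $d=3$, however, a convex polyhedron with a dihedral angle $\omega\in(3\pi/4,\pi)$ has $\lambda<4/3$, and $W^{2,q}$ regularity fails for every $q>3$, so the Sobolev-embedding route to a Lipschitz bound is not available. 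What you actually need, and what is true, is only $\nabla\bar p\in L^\infty(\Omega_\varepsilon)$: this follows from the singular-function expansions at corners and edges (all first singular exponents of a convex polytope are strictly greater than one, so the gradients of the singular functions vanish there) or, equivalently, from the known H\"older gradient estimates for Green's functions of the Dirichlet problem on convex polyhedral domains. Replacing the global $W^{2,q}$ claim by one of these arguments, which is precisely the content supplied by the references cited in the paper's proof, closes the gap; the rest of your proof then goes through.
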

\begin{proof}
The proof of the fact that $\bar u \in H^1(\Omega)$ follows directly from the projection formula \eqref{eq:projection_control} and \cite[Lemma 3.3]{MR3264224}. The $C^{0,1}(\bar \Omega)$-regularity of $\bar{u}$ can be found in \cite[Theorem 3.4]{MR2434067} and \cite[Theorem 4.2]{MR2674627}. For the sake of completeness, we briefly present a proof. \FF{If $\mathcal{E}=\emptyset$, the desired regularity follows. Let us thus assume that $\mathcal{E}\neq\emptyset$ and} let $t\in\FF{\mathcal{E}}$. The following asymptotic behavior holds as $\Omega\ni x\to t$:
\[
\FF{|\bar{p}(x)|} \approx C_{1} \log|x-t| +C_2 \text{ if } d=2,
\quad 
\FF{|\bar{p}(x)|} \approx C_{1}|x-t|^{-1}+C_2 \text{ if } d=3,
\quad
C_1 >0;
\]
see \cite[eq. (4.9)]{MR431753} and \cite[Theorem 3.3]{MR0223740}. Here, $C_2 \in \mathbb{R}$. As a result, for every $\mathsf{M}>0$, there exists a ball $B_{t}$ of strictly positive radius centered at $t$ such that $|\bar{p}(x)| > \mathsf{M}$ for $x \in B_{t}$. Since this argument holds for each $t\in\FF{\mathcal{E}}$, we conclude that, for every $\mathsf{M}>0$, there exist $\{ B_t \}_{t \in \FF{\mathcal{E}}}$ such that
$
|\bar{p}(x)| > \mathsf{M}
$
for $x \in \cup_{t\in\FF{\mathcal{E}}}B_{t}$. Set $\mathsf{M}$ sufficiently large and invoke the  projection formula \eqref{eq:projection_control} to deduce that, for $x\in \cup_{t\in\FF{\mathcal{E}}}B_{t}$, we have either $\bar{u}(x)=\texttt{a}$ or $\bar{u}(x)=\texttt{b}$. The $C^{0,1}(\bar \Omega)$-regularity of $\bar{u}$ thus follow from \eqref{eq:regularity_adjoint}.
\end{proof}

%%%%%%%%%%%%%%%%%%%%%%%%%%%%%%%%%%%%%%%%%%%%%%%%%%%%%%%%%%%%%
%%%%%%%%%%%%%%%%%%%%%%%%%%%%%%%%%%%%%%%%%%%%%%%%%%%%%%%%%%%%%
%%%%%%%%%%%%%%%%%%%%%%%%%%%%%%%%%%%%%%%%%%%%%%%%%%%%%%%%%%%%%
%%%%%%%%%%%%%%%%%%%%%%%%%%%%%%%%%%%%%%%%%%%%%%%%%%%%%%%%%%%%%
%%%%%%%%%%%%%%%%%%%%%%%%%%%%%%%%%%%%%%%%%%%%%%%%%%%%%%%%%%%%%

\subsection{\EO{Second order optimality conditions}}\label{sec:2nd_order}
In this section, we formulate necessary and sufficient second order optimality conditions. 

We begin our studies with the following result.

\begin{theorem}[$j$ is of class $C^2$ and $j''$ is locally Lipschitz]
\label{thm:diff_properties_j}
Assume that \textnormal{\ref{A1}}, \textnormal{\ref{A2}}, and \textnormal{\ref{A3}} hold. Then, the reduced cost functional $j: L^2(\Omega) \rightarrow \mathbb{R}$ is of class $C^2$. Moreover, for every $u, v_1,v_2 \in L^2(\Omega)$, we have
\begin{equation}\label{eq:charac_j2}
j''(u)(v_1,v_2)=\alpha(v_1,v_2)_{L^2(\Omega)}-\left(\tfrac{\partial^2 a}{\partial y^2}(\cdot,y)z_{v_1}z_{v_2},p\right)_{L^2(\Omega)}+\sum_{t\in \mathcal{D}}z_{v_1}(t)z_{v_2}(t),
\end{equation}
where $p$ solves \eqref{eq:adj_eq} and $z_{v_i}=\mathcal{S}'(u)v_i$, with $i \in \{1,2 \}$. In addition, we have
\begin{equation}\label{eq:continuity_of_j2}
|j''(u_1)v^2-j''(u_2)v^2|
\lesssim
\|u_1-u_2\|_{L^2(\Omega)}\|v\|_{L^2(\Omega)}^2.
\end{equation}
\end{theorem}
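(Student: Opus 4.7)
The plan is to derive the $C^2$ property and the formula \eqref{eq:charac_j2} via the chain rule and Theorem \ref{thm:properties_C_to_S}, and then to establish \eqref{eq:continuity_of_j2} by careful control of the differences of the state, linearised state and adjoint state. Writing $j=J\circ(\mathcal{S},\mathrm{Id}_{L^2(\Omega)})$, and noting that $\mathcal{S}:L^2(\Omega)\to H^2(\Omega)\cap H_0^1(\Omega)$ is $C^2$ by Theorem \ref{thm:properties_C_to_S} while $J$ is quadratic in $(y,u)$ on $(H^2(\Omega)\cap H_0^1(\Omega))\times L^2(\Omega)$ (since each point evaluation $y\mapsto y(t)$ is a continuous linear functional on $H^2(\Omega)\hookrightarrow C(\bar\Omega)$), the chain rule yields $j\in C^2(L^2(\Omega))$ together with the preliminary identity
\begin{equation*}
j''(u)(v_1,v_2)=\alpha(v_1,v_2)_{L^2(\Omega)}+\sum_{t\in\mathcal{D}}z_{v_1}(t)z_{v_2}(t)+\sum_{t\in\mathcal{D}}(y(t)-y_t)\mathfrak{z}(t),
\end{equation*}
with $y=\mathcal{S}u$, $z_{v_i}=\mathcal{S}'(u)v_i$ and $\mathfrak{z}=\mathcal{S}''(u)(v_1,v_2)$.

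To recast the last sum as the $\tfrac{\partial^2 a}{\partial y^2}$-integral of \eqref{eq:charac_j2}, I would next take $w=\mathfrak{z}\in H^2(\Omega)\cap H_0^1(\Omega)\hookrightarrow W_0^{1,\mathfrak{q}}(\Omega)$ with $\mathfrak{q}>d$ as a test function in the adjoint equation \eqref{eq:adj_eq}, and then use $w=p$ in \eqref{eq:aux_adjoint_diff_2}. Because $p\in W_0^{1,r}(\Omega)\setminus H_0^1(\Omega)$ for $r<d/(d-1)$, this second step is not direct and must be justified by approximating $p$ with a sequence $\{p_n\}\subset C_0^\infty(\Omega)$ converging to $p$ in $W_0^{1,r}(\Omega)$, exactly along the lines of the density argument already used in the proof of Theorem \ref{thm:optimality_cond}. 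Combining the two identities delivers \eqref{eq:charac_j2}.

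For the Lipschitz estimate \eqref{eq:continuity_of_j2}, set $y_i=\mathcal{S}u_i$, $z_i=\mathcal{S}'(u_i)v$, and let $p_i$ denote the associated adjoint state. The $\alpha$-contribution cancels, and \eqref{eq:charac_j2} reduces the task to bounding $\sum_{t\in\mathcal{D}}(z_1^2-z_2^2)(t)$ and the difference $(\tfrac{\partial^2 a}{\partial y^2}(\cdot,y_1)z_1^2,p_1)_{L^2(\Omega)}-(\tfrac{\partial^2 a}{\partial y^2}(\cdot,y_2)z_2^2,p_2)_{L^2(\Omega)}$. I would first derive, via Theorem \ref{thm:lipschitz_property} applied to the linearised equation for $z_i$ and to the equation satisfied by $z_1-z_2$ (whose source $(\tfrac{\partial a}{\partial y}(\cdot,y_2)-\tfrac{\partial a}{\partial y}(\cdot,y_1))z_2$ is controlled using \ref{A3} and $\|y_1-y_2\|_{L^\infty(\Omega)}\lesssim\|u_1-u_2\|_{L^2(\Omega)}$), the bounds
\begin{equation*}
\|z_i\|_{L^\infty(\Omega)}\lesssim\|v\|_{L^2(\Omega)},\qquad\|z_1-z_2\|_{L^\infty(\Omega)}\lesssim\|u_1-u_2\|_{L^2(\Omega)}\|v\|_{L^2(\Omega)}.
\end{equation*}
An analogous analysis of the Dirac-forcing problem satisfied by $p_1-p_2$, based on the regularity theory of \cite{MR812624}, yields $\|p_i\|_{W_0^{1,r}(\Omega)}\lesssim 1$ and $\|p_1-p_2\|_{W_0^{1,r}(\Omega)}\lesssim\|u_1-u_2\|_{L^2(\Omega)}$, whence $L^1(\Omega)$-control of $p_i$ and $p_1-p_2$. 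The point-evaluation term is then handled via $|z_1(t)^2-z_2(t)^2|\leq\|z_1+z_2\|_{L^\infty(\Omega)}\|z_1-z_2\|_{L^\infty(\Omega)}$, while the weighted $L^2$-product is split into three contributions---involving respectively $\tfrac{\partial^2 a}{\partial y^2}(\cdot,y_1)-\tfrac{\partial^2 a}{\partial y^2}(\cdot,y_2)$ (bounded by \ref{A3}), $z_1^2-z_2^2$, and $p_1-p_2$---each bounded by pairing an $L^\infty$-type factor against the $L^1$-norm of the adjoint factor. Collecting these contributions yields \eqref{eq:continuity_of_j2}.

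I expect the main obstacle to be the density/duality justification underpinning the derivation of \eqref{eq:charac_j2}, because $p$ is only $W_0^{1,r}$-regular with $r<d/(d-1)$ and cannot be inserted as a test function in the $H_0^1$-framework directly. A parallel difficulty arises in the Lipschitz estimate for $p_1-p_2$, whose governing problem carries Dirac-measure forcing on the right-hand side and must therefore be treated in the weak setting of \cite{MR812624} rather than in the standard $H^1$-theory.
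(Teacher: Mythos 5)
Your proposal is correct and follows essentially the same route as the paper: the chain rule plus Theorem \ref{thm:properties_C_to_S} for the $C^2$ property, the density argument of Theorem \ref{thm:optimality_cond} to justify testing \eqref{eq:aux_adjoint_diff_2} with $p$ and thereby convert the $\mathcal{S}''$-term into the $\tfrac{\partial^2 a}{\partial y^2}$-integral, and a three-way splitting of the weighted product (difference of $\tfrac{\partial^2 a}{\partial y^2}$, of the adjoints, and of the linearised states) combined with the $L^\infty$-bounds on $z_i$ and $z_1-z_2$ and $W^{1,r}$-stability of the adjoint differences. The only cosmetic deviation is your $L^\infty$--$L^1$ H\"older pairing where the paper uses $L^4$--$L^4$--$L^2$ via the embedding $H_0^1(\Omega)\hookrightarrow L^4(\Omega)$ and the restriction $r\in[2d/(d+2),d/(d-1))$; both work.
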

\begin{proof}
The fact that $j$ is of class $C^2$ is an immediate consequence of the differentiability properties of the control to state map $\mathcal{S}$ given in Theorem \ref{thm:properties_C_to_S}. It thus suffices to derive \eqref{eq:charac_j2} and \eqref{eq:continuity_of_j2}. To accomplish this task, we begin with a basic computation, which reveals that, for every $u,v_1,v_2\in L^2(\Omega)$, we have 
\begin{equation}\label{eq:charac_j2_prev}
j''(u)(v_1,v_2)
=
\alpha(v_1,v_2)_{L^2(\Omega)}
+
\sum_{t\in \mathcal{D}}
\left[(\mathfrak{z}(t)\cdot(\mathcal{S}u(t)-y_t)+z_{v_1}(t)z_{v_2}(t) \right],
\end{equation}
where $\mathfrak{z},z_{v_1},z_{v_2} \in H^{2}(\Omega)\cap H_0^1(\Omega)$ are as in the statement of Theorem \ref{thm:properties_C_to_S}. Set $w=  \mathfrak{z}$ in \eqref{eq:adj_eq}  and invoke a similar approximation argument to that used in the proof of Theorem \ref{thm:optimality_cond}, that essentially allows us to set $w = p$ in \eqref{eq:aux_adjoint_diff_2}, to obtain
\[
\sum_{t\in \mathcal{D}}\mathfrak{z}(t)\cdot(\mathcal{S}u(t)-y_t)=-\left(\tfrac{\partial^2 a}{\partial y^2}(\cdot,y)z_{v_1}z_{v_2},p\right)_{L^2(\Omega)}.
\]
Replacing the previous identity into \eqref{eq:charac_j2_prev} yields \eqref{eq:charac_j2}.

Let $u_1,u_2,v\in L^2(\Omega)$. Define $\chi = \mathcal{S}'(u_1)v$ and $\psi = \mathcal{S}'(u_2)v$; $\chi$ and $\psi$ correspond to the solutions to \eqref{eq:aux_adjoint} with $y = y_{u_{1}} := \mathcal{S}u_{1}$ and  $y = y_{u_{2}} := \mathcal{S}u_{2}$, respectively. In view of 
the identity \eqref{eq:charac_j2} we obtain
\begin{equation}
\label{eq:identity_ju1_ju2}
\begin{aligned}
j''(u_1)v^2-j''(u_2)v^2
& =
\left(\tfrac{\partial^2 a}{\partial y^2}(\cdot,y_{u_{2}})\psi^2,p_{u_{2}}\right)_{L^2(\Omega)}-\left(\tfrac{\partial^2 a}{\partial y^2}(\cdot,y_{u_{1}})\chi^2,p_{u_{1}}\right)_{L^2(\Omega)}
\\
& + \sum_{t\in \mathcal{D}}( \chi^2(t)-\psi^2(t))
=: \mathbf{I}+\sum_{t\in\mathcal{D}}\mathbf{II}_{t}.
\end{aligned}
\end{equation}
Here, $i \in \{ 1,2\}$ and $p_{u_{i}} \in W_0^{1,r}(\Omega)$, with $r < d/(d-1)$, denotes the solution to  \eqref{eq:adj_eq} with $y$ replaced by $y_{u_{i}}$.  To estimate $\mathbf{I}$, we first rewrite it as follows:
\begin{multline*}
\mathbf{I}=\left(\left[\tfrac{\partial^2 a}{\partial y^2}(\cdot,y_{u_{2}})-\tfrac{\partial^2 a}{\partial y^2}(\cdot,y_{u_{1}})\right]\psi^2,p_{u_{2}}\right)_{L^2(\Omega)}+\left(\tfrac{\partial^2 a}{\partial y^2}(\cdot,y_{u_{1}})\psi^2,p_{u_{2}}-p_{u_{1}}\right)_{L^2(\Omega)}\\
+ \left(\tfrac{\partial^2 a}{\partial y^2}(\cdot,y_{u_{1}})[\psi^2-\chi^2],p_{u_{1}}\right)_{L^2(\Omega)}=:\mathbf{I}_{1}+\mathbf{I}_{2}+ \mathbf{I}_{3}.
\end{multline*}
Invoke \ref{A3}, a generalized H\"{o}lder inequality, the Sobolev embedding $H_0^1(\Omega)\hookrightarrow L^4(\Omega)$, the well-posedness of problem \eqref{eq:aux_adjoint}, and the Lipschitz property \eqref{eq:lipschitz_prop}, to obtain
\begin{equation}\label{eq:estimate_I_a}
\mathbf{I}_{1}
\lesssim
\|y_{u_{1}}-y_{u_{2}}\|_{L^\infty(\Omega)}\|\nabla\psi\|_{L^2(\Omega)}^2\|p_{u_{2}}\|_{L^2(\Omega)}
\lesssim 
\|u_{1}-u_{2}\|_{L^2(\Omega)}\|v\|_{L^2(\Omega)}^2,
\end{equation}
where we have also used the stability estimate
\begin{equation}\label{eq:stab_state_eq}
\|p_{u_{2}}\|_{L^2(\Omega)}
\lesssim
\|\nabla p_{u_{2}}\|_{L^{r}(\Omega)}
\lesssim
\|y_{u_{2}}\|_{L^\infty(\Omega)}+\sum_{t\in\mathcal{D}}|y_{t}|
\lesssim C +\sum_{t\in\mathcal{D}}|y_{t}|.
\end{equation}
Theorem \ref{thm:well_posedness_semilinear} and the assumption on $u_2$ yield $\|y_{u_{2}}\|_{L^\infty(\Omega)} \lesssim \| u_2 -a(\cdot,0) \|_{L^2(\Omega)} \leq C$. To guarantee that $p_{u_{2}} \in L^2(\Omega)$, and thus the first estimate in \eqref{eq:stab_state_eq}, we further restrict the exponent $r$ to belong to $[2d/(d+2),d/(d-1))$ \cite[Theorem 4.12]{MR2424078}. To control $\mathbf{I}_2$, we invoke similar arguments to the ones that lead to \eqref{eq:estimate_I_a}. In fact, we have
\begin{multline*}
\mathbf{I}_{2}
\leq
C_{\mathsf{m}} \| \psi \|^2_{L^4(\Omega)} \|p_{u_{1}}-p_{u_{2}}\|_{L^{2}(\Omega)}
\lesssim 
\|\nabla\psi\|_{L^{2}(\Omega)}^2
\|\nabla(p_{u_{1}}-p_{u_{2}})\|_{L^{r}(\Omega)}
\\
\lesssim 
\|v\|_{L^2(\Omega)}^2\|y_{u_1}-y_{u_2}\|_{L^\infty(\Omega)}
\lesssim
\|v\|_{L^2(\Omega)}^2\|u_{1}-u_{2}\|_{L^2(\Omega)}.
\end{multline*}
Finally, to estimate $\mathbf{I}_3$, we notice that $\psi-\chi \in H_0^1(\Omega) \cap L^{\infty}(\Omega)$ solves
\begin{equation*}
(\nabla (\psi-\chi), \nabla w)+ \left(\tfrac{\partial a}{\partial y}(\cdot,y_{u_2})(\psi-\chi),w\right)_{L^2(\Omega)}=\left(\left[\tfrac{\partial a}{\partial y}(\cdot,y_{u_1})-\tfrac{\partial a}{\partial y}(\cdot,y_{u_2})\right]\chi,w\right)_{L^2(\Omega)}
\end{equation*}
for all $w\in H_0^1(\Omega)$. The estimate
$
\|\psi-\chi\|_{L^\infty(\Omega)} \lesssim \| [ \tfrac{\partial a}{\partial y}(\cdot,y_{u_1})-\tfrac{\partial a}{\partial y}(\cdot,y_{u_2})]\chi  \|_{L^2(\Omega)},
$
combined with \ref{A3} and the Lipschitz property \eqref{eq:lipschitz_prop}, allows us to conclude that
\begin{equation}\label{eq:estimate_tilde_hat_z}
\|\psi-\chi\|_{L^\infty(\Omega)}
\lesssim
\|y_{u_1}-y_{u_2}\|_{L^\infty(\Omega)}\|\chi\|_{L^2(\Omega)}
\lesssim
\|u_{1}-u_{2}\|_{L^2(\Omega)}\|v\|_{L^2(\Omega)}.
\end{equation}
Utilizing \ref{A3}, again, the well-posedness of problem \eqref{eq:aux_adjoint}, and \eqref{eq:estimate_tilde_hat_z} we obtain
$
\mathbf{I}_{3}
\lesssim
\|p_{u_{1}}\|_{L^2(\Omega)}
\|\psi-\chi\|_{L^\infty(\Omega)}(\|\psi\|_{L^2(\Omega)}+\|\chi\|_{L^2(\Omega)})
\lesssim
\|u_{1}-u_{2}\|_{L^2(\Omega)}\|v\|_{L^2(\Omega)}^2,
$
where we have also used the stability bound $\|\psi\|_{L^2(\Omega)}+\|\chi\|_{L^2(\Omega)} \lesssim \|v\|_{L^2(\Omega)}$ and an estimate for $\|p_{u_{1}}\|_{L^2(\Omega)}$ which is similar to the one derived in \eqref{eq:stab_state_eq}.

A collection of the previous bounds yield
$
\mathbf{I}
=
\mathbf{I}_{1}+\mathbf{I}_{2}+\mathbf{I}_{3}
\lesssim
\|u_{1}-u_{2}\|_{L^2(\Omega)}\|v\|_{L^2(\Omega)}^2.
$

Let $t\in\mathcal{D}$. We now estimate $\mathbf{II}_{t}$ in \eqref{eq:identity_ju1_ju2}. Combining the estimate \eqref{eq:estimate_tilde_hat_z} with a stability estimate for \eqref{eq:aux_adjoint}, it immediately follows that
$
\mathbf{II}_{t}
\lesssim
\|\psi-\chi\|_{L^\infty(\Omega)}(\|\psi\|_{L^\infty(\Omega)}+\|\chi\|_{L^\infty(\Omega)})
\lesssim
\|u_{1}-u_{2}\|_{L^2(\Omega)}\|v\|_{L^2(\Omega)}^2.
$

We conclude \eqref{eq:continuity_of_j2} by replacing the obtained estimates for $\mathbf{I}$ and $\mathbf{II}_{t}$ into \eqref{eq:identity_ju1_ju2}.
\end{proof}

Let $\bar{u} \in \mathbb{U}_{ad}$ satisfy the first order optimality conditions \eqref{eq:weak_st_eq}, \eqref{eq:adj_eq}, and \eqref{eq:var_ineq}. Define $\bar{\mathfrak{p}} :=  \bar p + \alpha \bar u$. The variational inequality \eqref{eq:var_ineq} immediately yields, {a.e.~$x\in \Omega$,
\begin{equation}\label{eq:derivative_j}
\bar{\mathfrak{p}}(x) 
= 0  \text{ if } \texttt{a}< \bar{u} < \texttt{b}, 
\qquad
\bar{\mathfrak{p}}(x) \geq  0  \text{ if }\bar{u}=\texttt{a}, 
\qquad
\bar{\mathfrak{p}}(x) \leq  0 \text{ if } \bar{u}=\texttt{b}.
\end{equation}

To formulate second order conditions, we introduce the \emph{cone of critical directions}
\begin{equation}\label{def:critical_cone}
C_{\bar{u}}:=\{v\in L^2(\Omega) \text{ satisfying } \eqref{eq:sign_cond} \text{ and } v(x) = 0 \text{ if } \bar{\mathfrak{p}}(x) \neq 0\},
\end{equation}
where condition \eqref{eq:sign_cond} reads as follows:
\begin{equation}
\label{eq:sign_cond}
v(x)
\geq 0 \text{ a.e.}~x\in\Omega \text{ if } \bar{u}(x)=\texttt{a},
\qquad
v(x)
\leq 0 \text{ a.e.}~x\in\Omega \text{ if } \bar{u}(x)=\texttt{b}.
\end{equation}
From now on, we will restrict $r$ to belong to $[2d/(d+2),d/(d-1))$ so that $p$, the solution to \eqref{eq:adj_eq}, belongs to $L^2(\Omega)$ \cite[Theorem 4.12]{MR2424078}. This implies that $\bar{\mathfrak{p}}\in L^2(\Omega)$.

We are now in position to present second order necessary and sufficient optimality conditions. While it is fair to say that for distributed semilinear optimal control problems such a theory is well-understood, our main source of difficulty here is that, \EO{under the \emph{most likely scenario} where $\mathcal{E} \neq \emptyset $,} $\bar p \notin H_0^1(\Omega) \cap C(\bar \Omega)$: $\bar p \in W_0^{1,r}(\Omega) \setminus H_0^1(\Omega) \cap C(\bar \Omega)$ with $r\in [2d/(d+2),d/(d-1))$.

\begin{theorem}[second order necessary optimality condition]
\label{thm:nec_opt_cond}
If $\bar{u}\in \mathbb{U}_{ad}$ denotes a locally optimal control for problem \eqref{def:weak_ocp}--\eqref{eq:weak_st_eq}, then
$
j''(\bar{u})v^2 \geq 0 
$
for all $v\in C_{\bar{u}}$.
\end{theorem}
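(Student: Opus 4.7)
The plan is the standard localization/Taylor argument, adapted to the critical cone $C_{\bar u}$ in which directions $v$ are required to vanish on $\{\bar{\mathfrak{p}}\neq 0\}$. The key reduction is that, for any $v\in C_{\bar u}$, the first order term $j'(\bar u)v=(\bar{\mathfrak{p}},v)_{L^2(\Omega)}$ vanishes identically, so the local optimality of $\bar u$ will translate directly into a sign condition on $j''(\bar u)v^2$ once we find a sequence of feasible perturbations.

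First I would construct a suitable approximating sequence. Given $v\in C_{\bar u}$, for each $k\in\mathbb{N}$ I define
\begin{equation*}
v_k(x):=\begin{cases} 0 & \text{if } \bar u(x)\in [\texttt{a},\texttt{a}+1/k)\cup(\texttt{b}-1/k,\texttt{b}], \\ \Pi_{[-k,k]}(v(x)) & \text{otherwise.} \end{cases}
\end{equation*}
By dominated convergence (using $|v_k|\leq |v|\in L^2(\Omega)$ and $v_k\to v$ pointwise a.e.), we have $v_k\to v$ in $L^2(\Omega)$. The sign conditions \eqref{eq:sign_cond} force $v_k=v=0$ wherever $\bar u(x)=\texttt{a}$ or $\bar u(x)=\texttt{b}$, and since $v=0$ on $\{\bar{\mathfrak{p}}\neq 0\}$ we also have $v_k=0$ there. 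Moreover, whenever $v_k(x)\neq 0$ we have $\bar u(x)\in[\texttt{a}+1/k,\texttt{b}-1/k]$ and $|v_k(x)|\leq k$, so choosing $\rho_k\in(0,k^{-2})$ guarantees $\bar u+\rho v_k\in \mathbb{U}_{ad}$ for every $\rho\in[0,\rho_k]$.

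Next, I would apply the second order Taylor expansion of $j$ around $\bar u$. Since Theorem \ref{thm:diff_properties_j} provides $j\in C^2(L^2(\Omega))$, for each $\rho\in(0,\rho_k)$ there exists $\theta=\theta(\rho,k)\in(0,\rho)$ with
\begin{equation*}
j(\bar u+\rho v_k)=j(\bar u)+\rho\, j'(\bar u)v_k+\frac{\rho^2}{2}j''(\bar u+\theta v_k)v_k^2.
\end{equation*}
By construction $v_k$ vanishes on $\{\bar{\mathfrak{p}}\neq 0\}$, hence $j'(\bar u)v_k=(\bar{\mathfrak{p}},v_k)_{L^2(\Omega)}=0$. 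Local optimality of $\bar u$ (for $\rho$ sufficiently small so that $\rho\|v_k\|_{L^2(\Omega)}\leq\delta$) yields $j(\bar u+\rho v_k)-j(\bar u)\geq 0$, which delivers $j''(\bar u+\theta v_k)v_k^2\geq 0$. Letting $\rho\downarrow 0$ and invoking the local Lipschitz estimate \eqref{eq:continuity_of_j2} gives $j''(\bar u)v_k^2\geq 0$ for every $k$.

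Finally, I would pass to the limit $k\to\infty$ in $j''(\bar u)v_k^2\geq 0$. Writing $j''(\bar u)v_k^2-j''(\bar u)v^2=j''(\bar u)(v_k-v,v_k+v)$ and using the explicit bilinear representation \eqref{eq:charac_j2}, this amounts to checking three terms. The first, $\alpha(v_k-v,v_k+v)_{L^2(\Omega)}$, vanishes in the limit by the $L^2$-convergence. For the other two I invoke the linearity and continuity of $\mathcal{S}'(\bar u):L^2(\Omega)\to H^2(\Omega)\cap H_0^1(\Omega)\hookrightarrow C(\bar\Omega)$, so that $z_{v_k-v}\to 0$ in $L^\infty(\Omega)$; combined with \ref{A3}, the pointwise sums $\sum_{t\in\mathcal{D}}z_{v_k-v}(t)z_{v_k+v}(t)$ vanish, and the term involving $\bar p$ is controlled via
\begin{equation*}
\bigl|\bigl(\tfrac{\partial^2 a}{\partial y^2}(\cdot,\bar y)\,z_{v_k-v}z_{v_k+v},\bar p\bigr)_{L^2(\Omega)}\bigr|\lesssim \|z_{v_k-v}\|_{L^\infty(\Omega)}\|z_{v_k+v}\|_{L^2(\Omega)}\|\bar p\|_{L^2(\Omega)}\to 0,
\end{equation*}
where the finiteness of $\|\bar p\|_{L^2(\Omega)}$ requires the already-imposed restriction $r\in[2d/(d+2),d/(d-1))$. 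The main subtlety lies precisely here: because $\bar p\notin H_0^1(\Omega)$, one cannot invoke Sobolev pairings directly and must rely on the low-regularity integrability of $\bar p$ and on the higher regularity ($L^\infty$) of the linearized states $z_{v_k-v}$; this is what forces the choice of the exponent $r$ made above.
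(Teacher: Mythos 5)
Your argument is the same localization/Taylor argument the paper uses, and apart from one point it reproduces the paper's proof step by step: same truncated directions $v_k$, same feasibility radius $\rho\leq k^{-2}$, same use of $j'(\bar u)v_k=0$, and the same two-stage limit ($\rho\downarrow 0$ via the Lipschitz continuity \eqref{eq:continuity_of_j2} of $j''$, then $k\uparrow\infty$ via the representation \eqref{eq:charac_j2}). Your treatment of the final limit is in fact slightly more detailed than the paper's.

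There is, however, a genuine flaw in your construction of $v_k$. You set $v_k(x)=0$ on the \emph{closed} sets where $\bar u(x)\in[\texttt{a},\texttt{a}+1/k)$ or $\bar u(x)\in(\texttt{b}-1/k,\texttt{b}]$, and you justify the pointwise convergence $v_k\to v$ by asserting that ``the sign conditions \eqref{eq:sign_cond} force $v_k=v=0$ wherever $\bar u(x)=\texttt{a}$ or $\bar u(x)=\texttt{b}$.'' That assertion is false: on $\{\bar u=\texttt{a}\}$ condition \eqref{eq:sign_cond} only forces $v\geq 0$, and membership in $C_{\bar u}$ only forces $v=0$ where $\bar{\mathfrak{p}}\neq 0$; by \eqref{eq:derivative_j} the set $\{\bar u=\texttt{a}\}\cap\{\bar{\mathfrak{p}}=0\}$ can have positive measure, and there $v$ may be strictly positive. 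On that set your $v_k$ is identically zero for every $k$, so $v_k\not\to v$ pointwise, and in $L^2$ your sequence converges to $v\cdot\mathbb{1}_{\{\texttt{a}<\bar u<\texttt{b}\}}$ rather than to $v$; the conclusion $j''(\bar u)v^2\geq 0$ then does not follow for the given $v$. The repair is exactly the paper's definition: use \emph{strict} inequalities $\texttt{a}<\bar u(x)<\texttt{a}+1/k$ and $\texttt{b}-1/k<\bar u(x)<\texttt{b}$, so that on the exact active sets $v_k=\Pi_{[-k,k]}(v)$. Feasibility of $\bar u+\rho v_k$ there is still guaranteed, since the sign condition gives $v_k\geq 0$ on $\{\bar u=\texttt{a}\}$ (resp.\ $v_k\leq 0$ on $\{\bar u=\texttt{b}\}$) and $\rho|v_k|\leq k^{-2}\cdot k\leq \texttt{b}-\texttt{a}$ for $k$ large. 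With that one-line change the rest of your argument goes through.
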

\begin{proof}
The proof essentially follows the same arguments as those elaborated in the proof of \cite[Theorem 23]{MR3586845}. For brevity, we skip details.
\end{proof}

We now derive, in the spirit of \cite[Theorem 23]{MR3586845}, a sufficient optimality condition with a minimal gap with respect to the necessary one stated in Theorem \ref{thm:nec_opt_cond}.

\begin{theorem}[second order sufficient optimality condition]\label{thm:optimal_solution}
Let $(\bar{y},\bar{p},\bar{u})$ be a triplet satisfying the first order optimality conditions \eqref{eq:weak_st_eq}, \eqref{eq:adj_eq}, and \eqref{eq:var_ineq}. If $j''(\bar{u})v^2 > 0$ for all $ v\in C_{\bar{u}}\setminus \{0\}$, then there exist $\mu > 0$ and $\sigma > 0$ such that
\begin{equation}\label{eq:optimal_minimum}
j(u)\geq j(\bar{u})+\tfrac{\mu}{2}\|u-\bar{u}\|_{L^2(\Omega)}^2\quad \forall u\in \mathbb{U}_{ad}: \|u-\bar{u}\|_{L^2(\Omega)}\leq \sigma.
\end{equation}
In particular, $\bar{u}$ is a locally optimal control in the sense of $L^2(\Omega)$.
\end{theorem}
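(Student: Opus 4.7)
The plan is to argue by contradiction, which is the standard strategy for quadratic growth conditions of this type. Assume that the conclusion fails: then for every $k \in \mathbb{N}$ there exists $u_k \in \mathbb{U}_{ad}$ with $\rho_k := \|u_k - \bar u\|_{L^2(\Omega)} \leq 1/k$ and
\[
j(u_k) < j(\bar u) + \tfrac{1}{2k}\rho_k^2.
\]
Introduce $v_k := (u_k - \bar u)/\rho_k$, so $\|v_k\|_{L^2(\Omega)} = 1$. Extract a subsequence (not relabeled) with $v_k \rightharpoonup v$ weakly in $L^2(\Omega)$.

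The first key step will be to show that $v \in C_{\bar u}$. The sign condition \eqref{eq:sign_cond} is inherited because it is preserved under weak $L^2$-limits: on $\{\bar u = \texttt{a}\}$ we have $v_k \geq 0$ a.e., so $v \geq 0$ a.e.\ there, and similarly on $\{\bar u = \texttt{b}\}$. To obtain $v = 0$ on $\{\bar{\mathfrak{p}} \neq 0\}$, I will combine two facts. From \eqref{eq:var_ineq} and the definition of $\bar{\mathfrak p}$, $j'(\bar u)(u_k - \bar u) = (\bar{\mathfrak p}, u_k - \bar u)_{L^2(\Omega)} \geq 0$; since $\bar{\mathfrak p}\in L^2(\Omega)$ (using the restriction $r \in [2d/(d+2),d/(d-1))$), weak convergence gives $(\bar{\mathfrak p},v)_{L^2(\Omega)} \geq 0$. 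Applying Taylor's theorem to $j$ at $\bar u$ and dividing by $\rho_k$ yields
\[
\tfrac{1}{\rho_k}(\bar{\mathfrak p},u_k-\bar u)_{L^2(\Omega)} + \tfrac{\rho_k}{2} j''(\hat u_k)v_k^2 < \tfrac{\rho_k}{2k},
\]
for some $\hat u_k$ between $\bar u$ and $u_k$; letting $k\uparrow \infty$ forces $(\bar{\mathfrak p},v)_{L^2(\Omega)} \leq 0$, hence $(\bar{\mathfrak p},v)_{L^2(\Omega)}=0$. Since \eqref{eq:derivative_j} combined with the sign condition on $v$ implies $\bar{\mathfrak p}(x)\,v(x)\geq 0$ a.e., we conclude $\bar{\mathfrak p}\,v = 0$ a.e., giving $v\in C_{\bar u}$.

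The second step is the limit analysis. Dropping the nonnegative term $\tfrac{1}{\rho_k^2}j'(\bar u)(u_k-\bar u)$ from the Taylor expansion divided by $\rho_k^2$ gives $j''(\hat u_k) v_k^2 < 1/k$. The Lipschitz estimate \eqref{eq:continuity_of_j2} then yields $j''(\bar u) v_k^2 < 1/k + C_{\mathpzc{m}}\rho_k$, so $\limsup_{k\uparrow\infty} j''(\bar u)v_k^2 \leq 0$. On the other hand, using the representation \eqref{eq:charac_j2}, I will pass to the limit term-by-term: the map $L^2(\Omega)\ni v\mapsto z_v = \mathcal{S}'(\bar u)v \in H^2(\Omega)\cap H_0^1(\Omega) \hookrightarrow C(\bar\Omega)$ is linear and compact, so $z_{v_k}\to z_v$ strongly in $C(\bar\Omega)$; consequently $\sum_{t\in\mathcal{D}} z_{v_k}(t)^2 \to \sum_{t\in\mathcal{D}} z_v(t)^2$ and $(\partial^2_y a(\cdot,\bar y)z_{v_k}^2, \bar p)_{L^2(\Omega)} \to (\partial^2_y a(\cdot,\bar y)z_v^2,\bar p)_{L^2(\Omega)}$. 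Only the quadratic term $\alpha\|v_k\|_{L^2(\Omega)}^2$ is merely weakly lower semicontinuous, giving
\[
\liminf_{k\uparrow\infty} j''(\bar u) v_k^2 \geq j''(\bar u) v^2 + \alpha\bigl(1 - \|v\|_{L^2(\Omega)}^2\bigr).
\]

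The third step closes the argument. Combining the two one-sided limits yields $j''(\bar u)v^2 + \alpha(1-\|v\|_{L^2(\Omega)}^2) \leq 0$. Since $\|v\|_{L^2(\Omega)}\leq 1$ and $\alpha > 0$, the parenthesis is nonnegative. If $v = 0$, then the inequality reduces to $\alpha \leq 0$, a contradiction. If $v \neq 0$, then $v \in C_{\bar u}\setminus\{0\}$ and the hypothesis gives $j''(\bar u)v^2 > 0$, again a contradiction. Hence \eqref{eq:optimal_minimum} holds, and local $L^2$-optimality of $\bar u$ follows immediately from the quadratic growth. The main obstacle I expect is the compactness argument that transfers weak $L^2$-convergence of $v_k$ to strong $C(\bar\Omega)$-convergence of $z_{v_k}$, which is precisely what makes the point-evaluation and semilinear terms behave continuously and renders the loss of norm in $\alpha\|v_k\|_{L^2(\Omega)}^2$ harmless.
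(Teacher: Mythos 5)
Your proposal is correct and follows essentially the same contradiction argument as the paper: normalize the differences, extract a weak limit, show it lies in the critical cone via the variational inequality and a Taylor expansion, pass to the limit in $j''$ using the compactness of $v\mapsto \mathcal{S}'(\bar u)v$ into $C(\bar\Omega)$, and exploit $\alpha>0$ to rule out $v=0$. The only (harmless) deviations are that you anchor Taylor's theorem at $\bar u$ and invoke the Lipschitz estimate \eqref{eq:continuity_of_j2} to replace $j''(\hat u_k)$ by $j''(\bar u)$ — which requires the easily verified uniform boundedness of $j''(\hat u_k)v_k^2$ that you leave implicit — whereas the paper works with the intermediate points $\tilde u_k,\hat u_k$ directly and proves convergence of the corresponding states and adjoint states.
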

\begin{proof}
We follow \cite[Theorem 23]{MR3586845} and proceed by contradiction. Assume that \eqref{eq:optimal_minimum}  does not hold. Hence, for any $k\in\mathbb{N}$, there is an element $u_{k}\in \mathbb{U}_{ad}$ such that
\begin{equation}\label{eq:contradic_I}
\|\bar{u}-u_{k}\|_{L^2(\Omega)}< \tfrac{1}{k},
\qquad 
j(u_{k}) < j(\bar{u}) + \tfrac{1}{2k}\|\bar{u}-u_{k}\|_{L^2(\Omega)}^2.
\end{equation}
Define 
$
\rho_{k}:=\|u_{k}-\bar{u}\|_{L^2(\Omega)}
$
and
$
v_{k}:=(u_{k}-\bar{u})/\rho_{k}.
$
We assume that (up to a subsequence if necessary) $v_{k}\rightharpoonup v$ in $L^2(\Omega)$. In what follows we prove that $v\in C_{\bar{u}}$ and that $v=0$.

Since the set of elements satisfying condition \eqref{eq:sign_cond} is closed and convex in $L^2(\Omega)$, it is weakly closed. Consequently, $v$ satisfies \eqref{eq:sign_cond} as well. To verify the remaining condition in \eqref{def:critical_cone}, we invoke the mean value theorem and \eqref{eq:contradic_I} to arrive at 
\begin{equation}\label{eq:contradic_II}
 j'(\tilde{u}_{k})v_{k}  =\tfrac{1}{\rho_k}(j(u_{k}) - j(\bar{u}))< \tfrac{\rho_{k}}{2k} \to 0,
\quad
k \uparrow \infty,
\end{equation}
where $\tilde{u}_{k}=\bar{u}+ \theta_{k}(u_{k} - \bar{u})$ and $\theta_{k} \in (0,1)$. Define $\tilde{y}_{k}:= \mathcal{S}\tilde{u}_{k}$ and $\tilde{p}_{k}$ as the unique solution to \eqref{eq:adj_eq} with $y=\tilde{y}_{k}$. Since $\tilde u_k \rightarrow \bar u$ in $L^2(\Omega)$ as $k \uparrow \infty$, an application of Theorem \ref{thm:lipschitz_property} yields $\tilde{y}_{k} \to \bar{y}$ in $H_0^1(\Omega) \cap C(\bar \Omega)$ as $k \uparrow \infty$. This, in view of \cite[Theorem 1]{MR812624}, implies that $\tilde{p}_{k}\to \bar{p}$ in $W_0^{1,r}(\Omega)$, for every $r < d/(d-1)$, as $k \uparrow\infty$. In particular, we have $\tilde{p}_{k}\to \bar{p}$ in $L^2(\Omega)$ as $k \uparrow\infty$. Consequently, since $\tilde{\mathfrak{p}}_k:= \tilde p_k + \alpha \tilde u_k \rightarrow \bar{\mathfrak{p}} = \bar p + \alpha \bar u$ in $L^2(\Omega)$ and $v_{k}\rightharpoonup v$ in $L^2(\Omega)$, as $k \uparrow\infty$, we invoke \eqref{eq:contradic_II} to obtain
\begin{equation*}
j'(\bar{u})v  = \int_{\Omega} \bar{\mathfrak{p}}(x) v(x) \mathrm{d}x = \lim_{k \uparrow \infty}
\int_{\Omega} \tilde{\mathfrak{p}}_k(x) v_k(x) \mathrm{d}x = \lim_{k \uparrow \infty}j'(\tilde{u}_{k})v_{k} \leq 0.
\end{equation*}
On the other hand, in view of \eqref{eq:var_ineq} we obtain $\int_{\Omega} \bar{\mathfrak{p}}(x) v_k(x) = \rho_k^{-1} \int_{\Omega} \bar{\mathfrak{p}}(x) (u_k(x) - \bar{u}(x)) \mathrm{d}x \geq 0$. This implies $\int_{\Omega} \bar{\mathfrak{p}}(x) v(x) \mathrm{d}x \geq 0$. Consequently, $\int_{\Omega} \bar{\mathfrak{p}}(x) v(x) \mathrm{d}x = 0$. Proceeding as in the proof of \cite[Theorem 23]{MR3586845}, we can thus conclude that $v\in C_{\bar{u}}$.

We now prove that $v=0$. To accomplish this task, we invoke Taylor's theorem, the inequality in \eqref{eq:contradic_I}, and $j'(\bar{u})(u_{k}-\bar{u})\geq 0$, for every $k\in\mathbb{N}$, to arrive at
\begin{equation*}
\tfrac{\rho_{k}^2}{2}j''(\hat{u}_{k})v_{k}^2
=
j(u_k) - j(\bar u)-j'(\bar u)(u_k - \bar u)
\leq 
j({u}_{k})-j(\bar{u}) < \tfrac{\rho_{k}^2}{2k},
\end{equation*}
where, for $k\in\mathbb{N}$, $\hat{u}_{k}=\bar{u}+\theta_{k}(u_{k} - \bar{u})$ with $\theta_{k}\in (0,1)$.
Thus, $j''(\hat{u}_{k})v_{k}^2 < k^{-1} \rightarrow 0$ as $k \uparrow \infty$. We now prove that $j''(\bar u )v^2 \leq \liminf_{k} j''(\hat{u}_{k})v_{k}^2$. Let $\hat{z}_{v_k}$ and $z_v$ be the solutions to \eqref{eq:aux_adjoint} with forcing terms $v_k$ and $v$, respectively. Invoke \eqref{eq:charac_j2} and write
\[
j''(\hat{u}_{k})v_{k}^2
=
\alpha \| v_k \|^2_{L^2(\Omega)}
-
\left(\tfrac{\partial^2 a}{\partial y^2}(\cdot,\hat{y}_k)\hat{z}^2_{v_k},\hat{p}_k\right)_{L^2(\Omega)}
+
\sum_{t\in \mathcal{D}}\hat{z}^2_{v_k}(t).
\]
Since $v_k \rightharpoonup v$ in $L^2(\Omega)$ implies that $\hat{z}_{v_k} \rightharpoonup z_v$ in $H^2(\Omega) \cap H_0^1(\Omega)$ as $k \uparrow \infty$, the compact embedding $H^2(\Omega) \hookrightarrow C(\bar \Omega)$ yields $\sum_{t \in \mathcal{D}}\hat{z}^2_{v_k}(t) \rightarrow \sum_{t \in \mathcal{D}}z^2_{v}(t)$. In addition, we have
\begin{multline}
\left | \int_{\Omega} \left( \tfrac{\partial^2 a}{\partial y^2}(x,\bar y) z^2_{v} \bar p - \tfrac{\partial^2 a}{\partial y^2}(x,\hat{y}_k)\hat{z}^2_{v_k} \hat{p}_k \right) \mathrm{d}x \right|
\leq  C_{\mathsf{m}} \| z_{v} \|^2_{L^{\infty}(\Omega)} \EO{\|  \bar p -  \hat{p}_k\|_{L^1(\Omega)}}
\\
+ C_{\mathsf{m}} \| \hat{p}_k\|_{L^1(\Omega)} \left( 
\|  \bar y -  \hat{y}_k\|_{L^{\infty}(\Omega)} \| z_{v} \|^2_{L^{\infty}(\Omega)}
+
 \| z_{v} + \hat{z}_{v_k}\|_{L^{\infty}(\Omega)}  \| z_{v} - \hat{z}_{v_k}\|_{L^{\infty}(\Omega)}
 \right) \rightarrow 0
\label{eq:jpp_aux_1}
\end{multline}
as $k \uparrow \infty$. To obtain \eqref{eq:jpp_aux_1}, we used \ref{A3}, $\tilde{p}_{k}\to \bar{p}$ in $W_0^{1,r}(\Omega)$, for every $r < d/(d-1)$, and  $\tilde{y}_{k} \to \bar{y}$ in $H_0^1(\Omega) \cap C(\bar \Omega)$ as $k \uparrow \infty$. Since the square of $\| v \|_{L^2(\Omega)}$ is weakly lower semicontinuous in $L^2(\Omega)$, we can thus deduce that $j''(\bar u )v^2 \leq \liminf_{k} j''(\hat{u}_{k})v_{k}^2$. Therefore, since $j''(\bar u )v^2 \leq \liminf_{k}j''(\hat{u}_{k})v_{k}^2 \leq 0$ and $v \in C_{\bar u}$, the second order optimality condition $j''(\bar u)v^2 >0$ for all $v \in C_{\bar u} \setminus \{ 0 \}$ immediately yields $v=0$.

Finally, since $v = 0$, we have $\hat{z}_{v_k} \rightharpoonup 0$ in $H^2(\Omega) \cap H_0^1(\Omega)$ as $k \uparrow \infty$. In view of
\[
\alpha = \alpha \| v_k \|^2_{L^2(\Omega)} = j''(\hat{u}_{k})v_{k}^2 
+
\left(\tfrac{\partial^2 a}{\partial y^2}(\cdot,\hat{y}_k)\hat{z}^2_{v_k},\hat{p}_k\right)_{L^2(\Omega)}
-
\sum_{t\in \mathcal{D}}\hat{z}^2_{v_k}(t),
\]
$\liminf_{k}j''(\hat{u}_{k})v_{k}^2 \leq 0$ yields $\alpha \leq 0$. This is a contradiction and concludes the proof.
\end{proof}

Let us introduce
$
C_{\bar{u}}^\tau:=\{v\in L^2(\Omega) \textnormal{ satisfying \eqref{eq:sign_cond} and } v(x)=0 \textnormal{ if } |\bar{\mathfrak{p}}(x)|>\tau \}
$
and present the following result.

\begin{theorem}[equivalent optimality conditions]\label{thm:equivalent_opt_cond}
If $(\bar{y},\bar{p},\bar{u})$ denotes a triplet satisfying \eqref{eq:weak_st_eq}, \eqref{eq:adj_eq}, and \eqref{eq:var_ineq}, then the following statements are equivalent:
\begin{equation}\label{eq:second_order_2_2}
j''(\bar{u})v^2 > 0 \quad \forall v \in C_{\bar{u}}\setminus \{0\}
\end{equation}
and
\begin{equation}\label{eq:second_order_equivalent}
\exists \mu, \tau >0: \quad j''(\bar{u})v^2 \geq \mu \|v\|_{L^2(\Omega)}^2 \quad \forall v \in C_{\bar{u}}^\tau.
\end{equation}
\end{theorem}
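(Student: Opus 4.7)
\medskip

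\noindent\textbf{Proof proposal.} The implication \eqref{eq:second_order_equivalent} $\Rightarrow$ \eqref{eq:second_order_2_2} is immediate from the inclusion $C_{\bar u}\subset C_{\bar u}^{\tau}$, valid for every $\tau>0$: indeed, $v\in C_{\bar u}$ forces $v(x)=0$ on $\{x:\bar{\mathfrak{p}}(x)\neq 0\}\supset\{x:|\bar{\mathfrak{p}}(x)|>\tau\}$, and $v$ already satisfies \eqref{eq:sign_cond}. The coercivity estimate then gives $j''(\bar u)v^2\geq\mu\|v\|_{L^2(\Omega)}^2>0$ whenever $v\neq 0$.

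For the nontrivial direction \eqref{eq:second_order_2_2} $\Rightarrow$ \eqref{eq:second_order_equivalent} I would argue by contradiction. Assuming that no pair $(\mu,\tau)$ fulfills \eqref{eq:second_order_equivalent}, a standard extraction produces a sequence $\{v_k\}_{k\in\mathbb{N}}$ with $v_k\in C_{\bar u}^{1/k}$, $\|v_k\|_{L^2(\Omega)}=1$, and $j''(\bar u)v_k^2<1/k$. After passing to a subsequence, $v_k\rightharpoonup v$ in $L^2(\Omega)$.

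The first key step is to check that the weak limit $v$ actually lives in $C_{\bar u}$. The sign condition \eqref{eq:sign_cond} is inherited because the set of functions satisfying it is closed and convex in $L^2(\Omega)$, hence weakly closed. To verify the vanishing condition in \eqref{def:critical_cone}, fix an arbitrary $\tau>0$; for all $k$ with $1/k<\tau$ one has $v_k\in C_{\bar u}^{1/k}\subset C_{\bar u}^{\tau}$, and the latter cone, being defined by the linear constraint $\mathbf{1}_{\{|\bar{\mathfrak{p}}|>\tau\}}v=0$ together with \eqref{eq:sign_cond}, is weakly closed in $L^2(\Omega)$. Thus $v\in C_{\bar u}^{\tau}$ for every $\tau>0$, and letting $\tau\downarrow 0$ yields $v(x)=0$ a.e.\ on $\{x:\bar{\mathfrak{p}}(x)\neq 0\}$, so that $v\in C_{\bar u}$.

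The second step uses the representation \eqref{eq:charac_j2} to pass to the limit. Because $v_k\rightharpoonup v$ in $L^2(\Omega)$, linearity and well-posedness of \eqref{eq:aux_adjoint} give $z_{v_k}\rightharpoonup z_{v}$ in $H^2(\Omega)\cap H_0^1(\Omega)$; the compact embedding $H^2(\Omega)\hookrightarrow C(\bar\Omega)$ upgrades this to uniform convergence on $\bar\Omega$. Hence $\sum_{t\in\mathcal{D}}z_{v_k}(t)^2\to\sum_{t\in\mathcal{D}}z_{v}(t)^2$, and, since the restriction $r\in[2d/(d+2),d/(d-1))$ guarantees $\bar p\in L^2(\Omega)$, assumption \ref{A3} together with the uniform bound on $z_{v_k}$ yields $\bigl(\tfrac{\partial^2 a}{\partial y^2}(\cdot,\bar y)z_{v_k}^2,\bar p\bigr)_{L^2(\Omega)}\to\bigl(\tfrac{\partial^2 a}{\partial y^2}(\cdot,\bar y)z_{v}^2,\bar p\bigr)_{L^2(\Omega)}$. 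Combining these with the weak lower semicontinuity of $\|\cdot\|_{L^2(\Omega)}^2$ produces $j''(\bar u)v^2\leq\liminf_k j''(\bar u)v_k^2\leq 0$. The hypothesis \eqref{eq:second_order_2_2} together with $v\in C_{\bar u}$ then forces $v=0$.

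Finally, since $v=0$ gives $z_{v_k}\to 0$ in $C(\bar\Omega)$, both non-$L^2$ summands in \eqref{eq:charac_j2} evaluated at $v_k$ tend to zero, so $\lim_k j''(\bar u)v_k^2=\alpha\lim_k\|v_k\|_{L^2(\Omega)}^2=\alpha>0$, contradicting $j''(\bar u)v_k^2<1/k\to 0$. The main obstacle is the first step: the sets $C_{\bar u}^{\tau}$ vary with $\tau$ in a monotone way opposite to the inclusion one wants for passing to a single weak limit, so one must carefully pass through the family and only then let $\tau\downarrow 0$.
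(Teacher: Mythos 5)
Your proof is correct and follows the same overall contradiction strategy as the paper: normalize a sequence violating the coercivity estimate, extract a weak limit $v$, show $v\in C_{\bar u}$, pass to the limit in $j''(\bar u)v_k^2$ via the representation \eqref{eq:charac_j2} to get $j''(\bar u)v^2\leq 0$, conclude $v=0$ from \eqref{eq:second_order_2_2}, and then contradict $j''(\bar u)v_k^2\to 0$ with the limit $\alpha>0$. The one place where you genuinely diverge is the verification that the weak limit vanishes where $\bar{\mathfrak{p}}\neq 0$. The paper pairs $v_\tau$ against $\bar{\mathfrak{p}}\in L^2(\Omega)$ and exploits \eqref{eq:derivative_j} together with the sign condition to obtain $0\leq \int_\Omega\bar{\mathfrak{p}}v\,\mathrm{d}x=\lim_{\tau\downarrow 0}\int_{\{|\bar{\mathfrak{p}}|\leq\tau\}}\bar{\mathfrak{p}}v_\tau\,\mathrm{d}x\leq\lim_{\tau\downarrow 0}\tau\sqrt{|\Omega|}=0$, whence $\int_\Omega|\bar{\mathfrak{p}}v|\,\mathrm{d}x=0$. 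You instead observe that for each fixed $\tau>0$ the tail of the sequence lies in $C_{\bar u}^\tau$, that each $C_{\bar u}^\tau$ is closed and convex in $L^2(\Omega)$ (hence weakly closed), and that $\bigcap_{\tau>0}C_{\bar u}^\tau=C_{\bar u}$; this is a legitimate and slightly more economical route, since it bypasses \eqref{eq:derivative_j} and the integral estimate at this step. The paper's integral computation has the advantage that it is the same argument reused in Theorem \ref{thm:optimal_solution}, where the multipliers $\tilde{\mathfrak{p}}_k$ vary along the sequence and a purely set-theoretic membership argument is not available. Everything else --- the weak closedness of the sign-condition set, the upgrade of $z_{v_k}\rightharpoonup z_v$ in $H^2(\Omega)\cap H_0^1(\Omega)$ to uniform convergence, the weak lower semicontinuity of the quadratic term, and the final contradiction with $\alpha>0$ --- matches the paper's proof.
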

\begin{proof}
The proof essentially follows the same arguments as those elaborated in the proof of \cite[Theorem 25]{MR3586845}. For brevity, we skip details.
\end{proof}

%%%%%%%%%%%%%%%%%%%%%%%%%%%%%%%%%%%%%%%%%%%%%%%%%%%%%%%%%%%%%%%%%%
%%%%%%%%%%%%%%%%%%%%%%%%%%%%%%%%%%%%%%%%%%%%%%%%%%%%%%%%%%%%%%%%%%
%%%%%%%%%%%%%%%%%%%%%%%%%%%%%%%%%%%%%%%%%%%%%%%%%%%%%%%%%%%%%%%%%%
%%%%%%%%%%%%%%%%%%%%%%%%%%%%%%%%%%%%%%%%%%%%%%%%%%%%%%%%%%%%%%%%%%
%%%%%%%%%%%%%%%%%%%%%%%%%%%%%%%%%%%%%%%%%%%%%%%%%%%%%%%%%%%%%%%%%%
%%%%%%%%%%%%%%%%%%%%%%%%%%%%%%%%%%%%%%%%%%%%%%%%%%%%%%%%%%%%%%%%%%
%%%%%%%%%%%%%%%%%%%%%%%%%%%%%%%%%%%%%%%%%%%%%%%%%%%%%%%%%%%%%%%%%%
%%%%%%%%%%%%%%%%%%%%%%%%%%%%%%%%%%%%%%%%%%%%%%%%%%%%%%%%%%%%%%%%%%

\section{Finite element approximation}\label{sec:fem}

We now introduce the discrete setting in which we will operate \cite{MR2373954,CiarletBook,Guermond-Ern}. We denote by $\mathscr{T}_h = \{ T\}$ a conforming partition, or mesh, of $\bar{\Omega}$ into closed simplices $T$ with size $h_T = \text{diam}(T)$. Define $h:=\max_{ T \in \mathscr{T}_h} h_T$. We denote by $\mathbb{T} = \{\mathscr{T}_h \}_{h>0}$ a collection of conforming and quasi-uniform meshes $\mathscr{T}_h$.

Given a mesh $\mathscr{T}_{h} \in \mathbb{T}$, we define the finite element space 
\begin{equation}\label{def:piecewise_linear_set}
\mathbb{V}_{h}:=\{v_{h}\in C(\bar{\Omega}): v_{h}|_T\in \mathbb{P}_{1}(T) \ \forall T\in \T_{h}\}\cap H_0^1(\Omega).
\end{equation}

In the following sections we present finite element approximations of the state and adjoint equations and the optimal control problem \eqref{def:weak_ocp}--\eqref{eq:weak_st_eq}.

\subsection{Discrete state equation}\label{sec:fem_state}

Let $f \in L^2(\Omega)$. We define the Galerkin approximation of the solution $\mathsf{y}$ to problem \eqref{eq:weak_semilinear_pde} by
\begin{equation}\label{eq:discrete_semilinear}
\mathsf{y}_h\in\mathbb{V}_h : 
\quad
(\nabla \mathsf{y}_h, \nabla v_{h})_{L^2(\Omega)} + (a(\cdot,\mathsf{y}_h),v_{h})_{L^2(\Omega)} = (f,v_{h})_{L^2(\Omega)} 
\quad \forall v_h \in \mathbb{V}_h.
\end{equation}
Let $a = a(x,y) : \Omega \times \mathbb{R} \rightarrow \mathbb{R}$ be a Carath\'eodory function that is monotone increasing and locally Lipschitz in $y$, a.e.~in $\Omega$, with $a(\cdot,0) \in L^2(\Omega)$. An application of Brouwer's fixed point theorem yields the existence of a unique solution to \eqref{eq:discrete_semilinear}. In addition, we have $\| \nabla \mathsf{y}_h \|_{L^2(\Omega)} \lesssim \| f- a(\cdot,0) \|_{L^2(\Omega)}$; see \cite[Theorem 3.2]{MR3333667} and \cite[section 7]{MR3586845}.

We now provide a local regularity result for the solution $\mathsf{y}$ of problem \eqref{eq:weak_semilinear_pde} that will be of importance to derive error estimates. Let $\Omega_1 \Subset \Omega_0 \Subset \Omega$ with $\Omega_0$ smooth. Let $f \in L^2(\Omega) \cap L^t(\Omega_0)$, where $t \in [2,\infty)$. Since $\mathsf{y}$ can be seen as the solution to
\[
\mathsf{y} \in H_0^1(\Omega): \quad (\nabla \mathsf{y}, \nabla v)_{L^2(\Omega)} = (f - a(\cdot,\mathsf{y}),v)_{L^2(\Omega)} \quad \forall v \in H_0^1(\Omega),
\]
we can invoke \cite[Lemma 4.2]{MR3973329} to deduce that
\begin{equation}
\| \mathsf{y} \|_{W^{2,t}(\Omega_1)} \leq C_t \left( \| f - a(\cdot,\mathsf{y}) \|_{L^t(\Omega_0)}
+
\| f - a(\cdot,\mathsf{y})   \|_{L^{2}(\Omega)} \right),
\label{eq:reg_W2t}
\end{equation}
where $C_t$ behaves as $C t$, with $C>0$, as $t \uparrow \infty$. Notice that we have further assumed that $a$ satisfies $a(\cdot,0) \in L^{t}(\Omega_0)$, which, since $a = a(x,y)$ is locally Lipschitz in $y$, implies that $\| a(\cdot,\mathsf{y}) \|_{L^{t}(\Omega_0)} \lesssim \| f \EO{-a(\cdot,0)} \|_{L^2(\Omega)} + \| a(\cdot,0) \|_{L^{t}(\Omega_0)}$.

\begin{theorem}[a priori error estimates]\label{thm:error_estimates_state}
Let $\Omega \subset \mathbb{R}^d$ be an open, bounded, and convex polytope. Let $a = a(x,y) : \Omega \times \mathbb{R} \rightarrow \mathbb{R}$ be a Carath\'eodory function that is monotone increasing and locally Lipschitz in $y$ with $a(\cdot,0) \in L^2(\Omega)$. Let $\mathsf{y} \in H_0^1(\Omega)\cap H^2(\Omega)$ and $\mathsf{y}_{h}\in\mathbb{V}_h$ be the solutions to \eqref{eq:weak_semilinear_pde} and \eqref{eq:discrete_semilinear}, respectively, with $f \in L^2(\Omega)$. If $h$ is sufficiently small, we thus have the following error estimates:
\begin{equation}\label{eq:global_estimate_state}
\|\mathsf{y}-\mathsf{y}_{h}\|_{L^2(\Omega)}\lesssim h^2 \|f-a(\cdot,0)\|_{L^2(\Omega)}, 
\end{equation}
and
\begin{equation}\label{eq:global_estimate_state_II}
\|\mathsf{y}-\mathsf{y}_{h}\|_{L^\infty(\Omega)} \lesssim h^{2-\frac{d}{2}} \|f-a(\cdot,0)\|_{L^2(\Omega)}.
\end{equation}
Let $\Omega_1 \Subset \Omega_0 \Subset \Omega$ with $\Omega_0$ smooth. If, in addition, $f, a(\cdot,0) \in L^{\infty}(\Omega_0)$, then
\begin{equation}\label{eq:local_estimate_state}
\|\mathsf{y}-\mathsf{y}_{h}\|_{L^\infty(\Omega_1)}\lesssim h^2|\log h|^{2}.
\end{equation}
In all estimates the hidden constant is independent of $h$.
\end{theorem}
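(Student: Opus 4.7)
The plan is to reduce the semilinear problem to a linear elliptic one by linearization, and then invoke standard FEM theory for the resulting linear problem combined with the local regularity estimate \eqref{eq:reg_W2t}.

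\emph{Linearization reduction.} Setting $e := \mathsf{y} - \mathsf{y}_h$, I would introduce
\[
b(x) := \frac{a(x,\mathsf{y}(x)) - a(x,\mathsf{y}_h(x))}{\mathsf{y}(x) - \mathsf{y}_h(x)} \quad \text{if } \mathsf{y}(x) \neq \mathsf{y}_h(x),
\]
extended by $0$ otherwise. Monotonicity of $a$ gives $b \geq 0$ a.e., and the local Lipschitz property of $a$ together with a uniform $L^\infty$-bound on $\mathsf{y}_h$ (which requires a separate argument, e.g.\ a discrete Stampacchia estimate for $\mathsf{y}_h$ in the spirit of Theorem \ref{thm:well_posedness_semilinear}, and which is where ``$h$ sufficiently small'' enters) yields $\|b\|_{L^\infty(\Omega)}$ bounded uniformly in $h$. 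Subtracting the weak formulations \eqref{eq:weak_semilinear_pde} and \eqref{eq:discrete_semilinear} then produces the linear Galerkin orthogonality
\[
(\nabla e, \nabla v_h)_{L^2(\Omega)} + (b\, e, v_h)_{L^2(\Omega)} = 0 \qquad \forall v_h \in \mathbb{V}_h,
\]
so $\mathsf{y}_h$ is the Ritz--Galerkin projection of $\mathsf{y}$ for the coercive, symmetric bilinear form $\mathscr{B}(u,v):=(\nabla u,\nabla v)_{L^2(\Omega)}+(bu,v)_{L^2(\Omega)}$. From this point on, the analysis is that of a linear elliptic problem with a bounded, nonnegative zeroth-order coefficient.

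\emph{Global estimates \eqref{eq:global_estimate_state} and \eqref{eq:global_estimate_state_II}.} Since $\Omega$ is a convex polytope, Theorem \ref{thm:well_posedness_semilinear} supplies $\|\mathsf{y}\|_{H^2(\Omega)} \lesssim \|f - a(\cdot,0)\|_{L^2(\Omega)}$. Combined with a textbook C\'ea estimate for $\mathscr{B}$ and a standard Aubin--Nitsche duality argument (whose dual problem is again of the form $-\Delta + b$ and inherits $H^2$ regularity), this yields \eqref{eq:global_estimate_state}. For \eqref{eq:global_estimate_state_II} I would split $e = (\mathsf{y} - \Pi_h \mathsf{y}) + (\Pi_h \mathsf{y} - \mathsf{y}_h)$, where $\Pi_h$ is a standard Lagrange interpolant; the Sobolev embedding $H^2(\Omega) \hookrightarrow C^{0,2-d/2}(\bar\Omega)$ (valid for $d \in \{2,3\}$) bounds the interpolation error by $h^{2-d/2}\|\mathsf{y}\|_{H^2(\Omega)}$, and the quasi-uniform inverse inequality $\|\cdot\|_{L^\infty(\Omega)} \lesssim h^{-d/2}\|\cdot\|_{L^2(\Omega)}$ together with \eqref{eq:global_estimate_state} controls the discrete difference.

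\emph{Local maximum-norm estimate \eqref{eq:local_estimate_state}.} Here I would apply a Schatz--Wahlbin type local $L^\infty$ error estimate for the Ritz--Galerkin projection associated with $\mathscr{B}$, on an intermediate subdomain $\Omega_1 \Subset \Omega_1' \Subset \Omega_0$, of the form
\[
\|e\|_{L^\infty(\Omega_1)} \lesssim |\log h|\, \inf_{v_h \in \mathbb{V}_h}\|\mathsf{y}-v_h\|_{L^\infty(\Omega_1')} + \|e\|_{L^2(\Omega)}.
\]
The residual $\|e\|_{L^2(\Omega)}$ is already $\mathcal{O}(h^2)$ by \eqref{eq:global_estimate_state}. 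For the best-approximation term I would invoke the local regularity estimate \eqref{eq:reg_W2t}, which gives $\|\mathsf{y}\|_{W^{2,t}(\Omega_1')} \lesssim t$ for all $t \in [2,\infty)$ with a constant depending only on $\|f\|_{L^\infty(\Omega_0)}$, $\|f\|_{L^2(\Omega)}$ and $\|a(\cdot,0)\|_{L^\infty(\Omega)}$; combined with the elementwise interpolation estimate $\inf_{v_h}\|\mathsf{y} - v_h\|_{L^\infty(\Omega_1')} \lesssim h^{2-d/t}\|\mathsf{y}\|_{W^{2,t}(\Omega_1')}$, the classical balancing choice $t \sim |\log(1/h)|$ makes $h^{-d/t}$ bounded and yields $\inf_{v_h}\|\mathsf{y}-v_h\|_{L^\infty(\Omega_1')} \lesssim h^2|\log h|$; multiplication by the $|\log h|$ prefactor then gives \eqref{eq:local_estimate_state}. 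The main obstacles will be (i) proving the uniform-in-$h$ $L^\infty$-bound on $\mathsf{y}_h$ that makes $b$ well controlled, and (ii) locating a Schatz--Wahlbin estimate that handles the nontrivial, nonnegative zeroth-order coefficient $b$ on quasi-uniform meshes over convex polytopes; once both are in hand, the balancing of $t$ is routine.
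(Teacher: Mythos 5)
Your overall strategy — Galerkin orthogonality for a linearized form, $H^2$/local $W^{2,t}$ regularity, a Schatz--Wahlbin interior estimate, and the balancing choice $t\sim|\log h|$ — is the right skeleton, and the final step coincides with what the paper does. But the two obstacles you flag at the end are genuine gaps, not routine loose ends, and the second one is precisely the difficulty the paper's proof is engineered to avoid. Your plan requires an interior maximum-norm estimate for the Ritz projection associated with $\mathscr{B}(u,v)=(\nabla u,\nabla v)_{L^2(\Omega)}+(bu,v)_{L^2(\Omega)}$, where $b$ is merely bounded, nonnegative, and (worse) depends on $h$ through $\mathsf{y}_h$. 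The Schatz--Wahlbin results you would want to cite are stated for operators with smooth coefficients, and even granting an extension to rough coefficients you would still have to track uniformity of the constants in $h$. The paper sidesteps this entirely: it introduces the auxiliary discrete function $\mathfrak{y}_h$ solving \eqref{eq:discrete_semilinear} with $a(\cdot,\mathsf{y}_h)$ replaced by the \emph{frozen} datum $a(\cdot,\mathsf{y})$, so that $\mathsf{y}-\mathfrak{y}_h$ satisfies exact Galerkin orthogonality for the pure Laplacian and \cite[Corollary 5.1]{MR431753} applies verbatim; the leftover piece $\mathfrak{y}_h-\mathsf{y}_h$ is the finite element approximation of a Poisson problem with right-hand side $a(\cdot,\mathsf{y}_h)-a(\cdot,\mathsf{y})$, and is controlled globally in $L^\infty$ by $\|a(\cdot,\mathsf{y}_h)-a(\cdot,\mathsf{y})\|_{L^2(\Omega)}\lesssim\|\mathsf{y}-\mathsf{y}_h\|_{L^2(\Omega)}\lesssim h^2$ via \eqref{eq:global_estimate_state}. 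In effect the zeroth-order term is moved to the data of a constant-coefficient problem rather than kept in the operator; if you want to salvage your route, this is the decomposition to adopt.

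The first obstacle — a uniform-in-$h$ bound on $\|\mathsf{y}_h\|_{L^\infty(\Omega)}$ so that the local Lipschitz constant of $a$ controls $b$ — also creates a circularity in your write-up: you need it to define $b$ before running the duality argument that produces \eqref{eq:global_estimate_state}, yet the only bound available a priori is the $H^1$ stability estimate, which does not embed into $L^\infty$ for $d\in\{2,3\}$. The paper does not prove the global estimates \eqref{eq:global_estimate_state} and \eqref{eq:global_estimate_state_II} at all; it imports them from \cite[Lemmas 4 and Theorem 1]{MR2009948}, where exactly this uniform boundedness (and the attendant ``$h$ sufficiently small'' hypothesis) is established. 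If you intend to give a self-contained proof of the global estimates, you must supply the discrete $L^\infty$ bound first — e.g.\ by the uniform convergence argument of that reference — rather than deferring it.
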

\begin{proof}
We refer the reader to \cite[Lemma 4]{MR2009948} and \cite[Theorem 1]{MR2009948} for a proof of the estimates \eqref{eq:global_estimate_state} and \eqref{eq:global_estimate_state_II}, respectively; see also \cite[Theorem 2]{MR2009948}. We provide a proof of \eqref{eq:local_estimate_state} that is inspired in the arguments developed in \cite[Theorem 3.5]{MR3333667} and \cite[Lemma 4.4]{MR3973329}. We begin with a simple application of the triangle inequality and write
\[
\|\mathsf{y}-\mathsf{y}_{h}\|_{L^\infty(\Omega_1)} \leq \|\mathsf{y}- \mathfrak{y}_{h} \|_{L^\infty(\Omega_1)} + \|\mathfrak{y}_{h} -\mathsf{y}_h \|_{L^\infty(\Omega_1)},
\]
where $\mathfrak{y}_h$ solves \eqref{eq:discrete_semilinear} with $a(\cdot,\mathsf{y}_h)$ replaced by $a(\cdot,\mathsf{y})$. Let $\Lambda_1$ be a smooth domain such that $\Omega_1 \Subset \Lambda_1 \Subset \Omega_0$. Since $(\nabla(\mathsf{y}-\mathfrak{y}_{h}),\nabla v_h)_{L^2(\Omega)} = 0$ for all $v_h \in \mathbb{V}_h$, we invoke \cite[Corollary 5.1]{MR431753} to obtain the existence of $h_0 \in (0,1)$ such that
\[
\|\mathsf{y} - \mathfrak{y}_{h} \|_{L^\infty(\Omega_1)} \lesssim   |\log h| \| \mathsf{y} - v_h \|_{L^{\infty}(\Lambda_1)} + {\ell}^{-d/2} \| \mathsf{y} - \mathfrak{y}_{h}  \|_{L^2(\Omega)},
\quad
v_h \in \mathbb{V}_h,
\]
for every  $h \leq h_0$. Here, $\ell$ is such that $\mathrm{dist}(\Omega_1,\partial \Lambda_1) \geq \ell$, $\mathrm{dist}(\Lambda_1,\partial \Omega_{\EO{0}}) \geq \ell$, and $C h \leq \ell$, where $C>0$. Since $f, a(\cdot,0) \in L^{\infty}(\Omega_0) \cap L^2(\Omega)$, the regularity estimate  \eqref{eq:reg_W2t} implies that $\mathsf{y} \in W^{2,t}(\Lambda_1) \cap H^2(\Omega)$ for every $t < \infty$. Thus
\begin{multline*}
\|\mathsf{y}- \mathfrak{y}_{h} \|_{L^\infty(\Omega_1)} \leq C_1 |\log h|  t h^{2-\frac{d}{t}} \left[\| f - a(\cdot,\mathsf{y}) \|_{L^{\infty}(\Omega_0)} + \| f - a(\cdot,\mathsf{y})   \|_{L^{2}(\Omega)} \right]
\\
+ C_2 h^2  \| f - a(\cdot,\mathsf{y})   \|_{L^{2}(\Omega)}, \quad C_1, C_2 >0.
\end{multline*}
Inspired by \cite[page 3]{MR637283}, we thus set $t = |\log h|$ to arrive at the local estimate $\|\mathsf{y}- \mathfrak{y}_{h} \|_{L^\infty(\Omega_1)} \lesssim h^2| \log h|^2( \| f - a(\cdot,\mathsf{y}) \|_{L^{\infty}(\Omega_0)}+ \| f - a(\cdot,\mathsf{y})   \|_{L^{2}(\Omega)} )$. To control $\|\mathfrak{y}_{h} -\mathsf{y}_h \|_{L^\infty(\Omega_1)}$ 
we observe that
\[
\mathfrak{y}_{h} -\mathsf{y}_h \in \mathbb{V}_h:
\quad
(\nabla (\mathfrak{y}_{h} -\mathsf{y}_h), \nabla v_h)_{L^2(\Omega)} = ( a(\cdot,\mathsf{y}_h) - a(\cdot,\mathsf{y}),v_h)_{L^2(\Omega)} 
\quad
\forall v_h \in \mathbb{V}_h.
\]
Define $\mathfrak{y} \in H_0^1(\Omega)$ as the solution to $(\nabla \mathfrak{y}, \nabla v)_{L^2(\Omega)} = ( a(\cdot,\mathsf{y}_h) - a(\cdot,\mathsf{y}),v)_{L^2(\Omega)}$ for all $v \in H_0^1(\Omega)$. Observe that $\mathfrak{y}_{h} -\mathsf{y}_h$ can be seen as the finite element approximation of $\mathfrak{y}$ within $\mathbb{V}_h$. We thus proceed as follows: $\| \mathfrak{y}_{h} - \mathsf{y}_h \|_{L^\infty(\Omega_1)} \leq \| \mathfrak{y} - (\mathfrak{y}_{h} - \mathsf{y}_h) \|_{L^\infty(\Omega_1)} + \|  \mathfrak{y} \|_{L^\infty(\Omega_1)}$. Invoke a stability estimate for the problem that $\mathfrak{y}$ solves, a basic error estimate, and the Lipschitz property of $a=a(x,y)$ in $y$ to obtain
%\FF{, which holds because 
%\[
%\|\mathsf{y}_h\|_{L^\infty(\Omega)}
%\leq 
%\|\mathsf{y} - \mathsf{y}_h\|_{L^\infty(\Omega)} + \|\mathsf{y}\|_{L^\infty(\Omega)} 
%\lesssim
%(h^{2-\frac{d}{2}} +1) \|f - a(\cdot,0)\|_{L^2(\Omega)}.
%\]
%These arguments yield}
\begin{align*}
\| \mathfrak{y}_{h} - \mathsf{y}_h \|_{L^\infty(\Omega_1)} 
&\lesssim \| a(\cdot,\mathsf{y}_h) - a(\cdot,\mathsf{y})\|_{L^2(\Omega)}
\\
&\lesssim \| \mathsf{y}_h - \mathsf{y} \|_{L^2(\Omega)} \lesssim h^2 \left( \| f \|_{L^2(\Omega)} + \| a(\cdot, \mathsf{y}) \|_{L^2(\Omega)} \right),
\end{align*}
upon using \eqref{eq:global_estimate_state}. \EO{Observe that $\mathsf{y}_h$ is uniformly bounded. In fact, \eqref{eq:global_estimate_state_II} yields
\[
\|\mathsf{y}_h\|_{L^\infty(\Omega)}
\leq 
\|\mathsf{y} - \mathsf{y}_h\|_{L^\infty(\Omega)} + \|\mathsf{y}\|_{L^\infty(\Omega)} 
\lesssim
(h^{2-\frac{d}{2}} +1) \|f - a(\cdot,0)\|_{L^2(\Omega)}.
\]
}This concludes the proof.
\end{proof}

%%%%%%%%%%%%%%%%%%%%%%%%%%%%%%%%%%%%%%%%%%%%%%%%%%%%%%%%%%%%
%%%%%%%%%%%%%%%%%%%%%%%%%%%%%%%%%%%%%%%%%%%%%%%%%%%%%%%%%%%%
%%%%%%%%%%%%%%%%%%%%%%%%%%%%%%%%%%%%%%%%%%%%%%%%%%%%%%%%%%%%
%%%%%%%%%%%%%%%%%%%%%%%%%%%%%%%%%%%%%%%%%%%%%%%%%%%%%%%%%%%%

\subsection{Discrete adjoint equation}\label{sec:fem_adjoint}

Let $u\in\mathbb{U}_{ad}$ and $\{y_{t}\}_{t\in\mathcal{D}}\subset \mathbb{R}$. We define the Galerkin approximation to the adjoint equation \eqref{eq:adj_eq} by
\begin{equation}\label{eq:discrete_adjoint_problem}
q_{h}\in\mathbb{V}_{h}: \ (\nabla w_{h},\nabla q_{h})_{L^2(\Omega)}+\left(\tfrac{\partial a}{\partial y}(\cdot,y)q_{h},w_{h}\right)_{L^2(\Omega)} =  \sum_{t\in\mathcal{D}}\langle(y(t)-y_{t})\delta_{t}, w_h\rangle
\end{equation}
for all $w^{}_{h}\in \mathbb{V}_{h}$, where $y\in H_{0}^1(\Omega)\cap L^\infty(\Omega)$ denotes the solution to \eqref{eq:weak_st_eq} with $u\in\mathbb{U}_{ad}$, i.e., $y=\mathcal{S}u$. Standard results yield the existence and uniqueness of $q_{h}\in\mathbb{V}_{h}$.

We present the following error estimates.

\begin{theorem}[error estimates]\label{thm:error_estimate_fund}
Let $a = a(x,y) : \Omega \times \mathbb{R} \rightarrow \mathbb{R}$ be a Carath\'eodory function of class $C^1$ with respect to the second variable such that $a(\cdot,0) \in L^2(\Omega)$. Assume that \textnormal{\ref{A2}} holds and that, for all $\mathsf{m}>0$, $|\partial a/\partial y (x,y)| \leq C_{\mathsf{m}}$ for a.e.~$x \in \Omega$ and $y \in [-\mathsf{m},\mathsf{m}]$. Let $p\in W_{0}^{1,r}(\Omega)$, with $r \in [2d/(d+2),d/(d-1))$, and $q_h\in\mathbb{V}_h$ be the solutions to \eqref{eq:adj_eq} and \eqref{eq:discrete_adjoint_problem}, respectively. Then
\begin{equation}\label{eq:error_estimate_fundamental_II}
\|p-q_h\|_{L^2(\Omega)}
\lesssim h^{2-\frac{d}{2}} 
\sum_{t \in \mathcal{D}} |y(t) - y_t | 
\lesssim
h^{2-\frac{d}{2}}
\left[ \| u - a(\cdot,0) \|_{L^2(\Omega)} + \sum_{t\in\mathcal{D}}|y_{t}| \right]
\end{equation}
and
\begin{equation}\label{eq:error_estimate_fundamental}
\|p-q_h\|_{L^1(\Omega)}\lesssim h^2|\log h|^2.
\end{equation}
In both estimates the hidden constants are independent of $h$.
\end{theorem}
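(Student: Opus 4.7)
My plan is to prove both bounds by an Aubin--Nitsche duality argument tailored to the measure structure of the right-hand side in \eqref{eq:adj_eq}. Given $\phi$ in either $L^2(\Omega)$ (for the $L^2$-bound) or $L^\infty(\Omega)$ (for the $L^1$-bound), I would introduce the dual problem: find $w\in H_0^1(\Omega)$ such that
\begin{equation*}
(\nabla w,\nabla v)_{L^2(\Omega)} + \left(\tfrac{\partial a}{\partial y}(\cdot,y)w,v\right)_{L^2(\Omega)} = (\phi,v)_{L^2(\Omega)}\qquad\forall v\in H_0^1(\Omega),
\end{equation*}
and its Galerkin approximation $w_h\in\mathbb{V}_h$. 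The operator is formally self-adjoint with a bounded nonnegative zero-order term, and $\Omega$ is convex, so $w\in H_0^1(\Omega)\cap H^2(\Omega)$ with $\|w\|_{H^2(\Omega)}\lesssim\|\phi\|_{L^2(\Omega)}$.

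The crux is the identity
\begin{equation*}
(\phi,p-q_h)_{L^2(\Omega)} = \sum_{t\in\mathcal{D}}(y(t)-y_t)\bigl(w(t)-w_h(t)\bigr).
\end{equation*}
Testing the $w_h$-equation with $q_h\in\mathbb{V}_h$ and the discrete adjoint equation \eqref{eq:discrete_adjoint_problem} with $w_h\in\mathbb{V}_h$ yields $(\phi,q_h)_{L^2(\Omega)}=\sum_t(y(t)-y_t)w_h(t)$. The analogue $(\phi,p)_{L^2(\Omega)}=\sum_t(y(t)-y_t)w(t)$ requires care because $p\notin H_0^1(\Omega)$ in general. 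I would reuse the density argument from the proof of Theorem~\ref{thm:optimality_cond}: approximate $p$ by $p_n\in C_0^\infty(\Omega)$ in $W^{1,r}_0(\Omega)$ with $r\in[2d/(d+2),d/(d-1))$, test the $w$-equation with $p_n$, and pass to the limit using $\nabla w\in L^{r'}(\Omega)$ (which holds because $H^2\hookrightarrow W^{1,r'}$ for $d\le 3$), $\tfrac{\partial a}{\partial y}(\cdot,y)w\in L^\infty(\Omega)$, and $p\in L^2(\Omega)$. Testing the continuous adjoint equation \eqref{eq:adj_eq} with $w\in W^{1,r'}_0(\Omega)$, which is legal since $w\in H^2\cap H_0^1$, and invoking the symmetry $(\tfrac{\partial a}{\partial y}(\cdot,y)p,w)_{L^2(\Omega)}=(\tfrac{\partial a}{\partial y}(\cdot,y)w,p)_{L^2(\Omega)}$ delivers the claim.

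To obtain \eqref{eq:error_estimate_fundamental_II} I would estimate $\|w-w_h\|_{L^\infty(\Omega)}$ via the triangle inequality with the Lagrange interpolant $I_hw$: the interpolation error satisfies $\|w-I_hw\|_{L^\infty(\Omega)}\lesssim h^{2-d/2}\|w\|_{H^2(\Omega)}$, while an inverse estimate combined with the standard $L^2$-error bound $\|w-w_h\|_{L^2(\Omega)}\lesssim h^2\|w\|_{H^2(\Omega)}$ gives $\|I_hw-w_h\|_{L^\infty(\Omega)}\lesssim h^{2-d/2}\|w\|_{H^2(\Omega)}$. Hence $\|w-w_h\|_{L^\infty(\Omega)}\lesssim h^{2-d/2}\|\phi\|_{L^2(\Omega)}$, and the duality identity, after taking the supremum over $\phi$, produces the first inequality in \eqref{eq:error_estimate_fundamental_II}. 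The second inequality is immediate from Theorem~\ref{thm:well_posedness_semilinear} and the embedding $H^2\hookrightarrow L^\infty$: $|y(t)|\le\|y\|_{L^\infty(\Omega)}\lesssim\|y\|_{H^2(\Omega)}\lesssim\|u-a(\cdot,0)\|_{L^2(\Omega)}$.

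For \eqref{eq:error_estimate_fundamental} I would repeat the duality identity now with $\phi\in L^\infty(\Omega)$, $\|\phi\|_{L^\infty(\Omega)}\le 1$. Since $\mathcal{D}$ is finite I can fix smooth subdomains $\mathcal{D}\subset\Omega_1\Subset\Lambda_1\Subset\Omega_0\Subset\Omega$. Because $\phi-\tfrac{\partial a}{\partial y}(\cdot,y)w\in L^\infty(\Omega_0)\cap L^2(\Omega)$, the local regularity result \eqref{eq:reg_W2t} applied to the $w$-equation yields $\|w\|_{W^{2,t}(\Lambda_1)}\lesssim t$ uniformly in $t$. Replaying the Schatz--Wahlbin argument used to prove \eqref{eq:local_estimate_state} in Theorem~\ref{thm:error_estimates_state} (the estimate carries over to the bilinear form with the bounded zero-order term $\tfrac{\partial a}{\partial y}(\cdot,y)$) and selecting $t=|\log h|$ gives $\|w-w_h\|_{L^\infty(\Omega_1)}\lesssim h^2|\log h|^2$. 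Since $t\in\Omega_1$ for every $t\in\mathcal{D}$, substituting this bound into the duality identity yields \eqref{eq:error_estimate_fundamental}. I expect the main technical obstacle to be the $L^1$ part: propagating the Schatz--Wahlbin local pointwise estimate through the bilinear form containing the lower-order term and verifying that the $t=|\log h|$ trick still yields the claimed $h^2|\log h|^2$ rate uniformly in $h$.
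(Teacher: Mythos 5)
Your proof is correct and follows essentially the same route as the paper: for the $L^1$ bound the paper uses exactly your duality argument with $\mathfrak{f}=\mathrm{sgn}(p-q_h)$, the symmetry of the bilinear form, and the interior Schatz--Wahlbin estimate with $t=|\log h|$ borrowed from the proof of \eqref{eq:local_estimate_state}. The only difference is that for \eqref{eq:error_estimate_fundamental_II} the paper simply cites \cite[Theorem 3]{MR812624} (whose proof is the $L^2$--duality argument you reconstruct via interpolation plus inverse estimates), so your write-up is merely more self-contained at that point.
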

\begin{proof}
Define $\mathfrak{a}(x) = \partial a/\partial y (x,y(x))$, where $y = \mathcal{S}u$ and $u \in \mathbb{U}_{ad}$. Since $\mathfrak{a} \in L^{\infty}(\Omega)$ and $\mathfrak{a}(x) \geq 0$ for a.e.~$x \in\Omega$, we can apply  \cite[Theorem 3]{MR812624} in combination with Theorem \ref{thm:well_posedness_semilinear} to deduce \eqref{eq:error_estimate_fundamental_II}. The proof of the estimate \eqref{eq:error_estimate_fundamental} follows similar arguments as the ones developed in \cite{MR0471370} and \cite[Lemma 5.3]{MR3973329}. Let $\mathfrak{w}$ be the solution to
\[
\mathfrak{B}(\mathfrak{w},v):=
(\nabla \mathfrak{w},\nabla v)_{L^2(\Omega)} + \left(\tfrac{\partial a}{\partial y}(\cdot,y)  \mathfrak{w},v \right)_{L^2(\Omega)} = (\mathfrak{f},v)_{L^2(\Omega)} \quad \forall v \in H_0^1(\Omega).
\]
Let $\mathfrak{w}_h$ be the Ritz projection of $\mathfrak{w}$ within $\mathbb{V}_h$, i.e., $\mathfrak{w}_h \in \mathbb{V}_h$ is such that $\mathfrak{B}(\mathfrak{w}_h,v_h) = \mathfrak{B}(\mathfrak{w},v_h) $ for all $v_h \in \mathbb{V}_h$. Let $\mathfrak{f}= \mathrm{sgn}(p-q_h)$.  Thus,
\[
\|p-q_h\|_{L^1(\Omega)} = \int_{\Omega} \mathfrak{f}(p-q_h) \mathrm{d} x = \mathfrak{B}(\mathfrak{w},p) - \mathfrak{B}(\mathfrak{w}_h,q_h) = \sum_{t \in \mathcal{D}} (y(t) - y_t) (\mathfrak{m}- \mathfrak{m}_h)(t),
\]
where we have used that $p$ and $q_h$ solve \eqref{eq:adj_eq} and \eqref{eq:discrete_adjoint_problem}, respectively. Since $\mathcal{D} \Subset \Omega$,  similar arguments to the ones used in the proof of \eqref{eq:local_estimate_state} yield 
\[
\|p-q_h\|_{L^1(\Omega)} \lesssim h^2 |\log h|^2 \left(\| y \|_{L^{\infty}(\Omega)} + \sum_{t \in \mathcal{D}} |y_t | \right) \|  \mathfrak{f} \|_{L^{\infty}(\Omega)}.
\]
This concludes the proof.
\end{proof}

The next result will be of importance to obtain an error estimate for a solution technique of \eqref{def:weak_ocp}--\eqref{eq:weak_st_eq} based on the variational discretization approach.

\begin{theorem}[auxiliary estimate]\label{thm:error_estimate_adj}
Assume that \textnormal{\ref{A1}}, \textnormal{\ref{A2}}, and \textnormal{\ref{A3}} hold. Let $p\in W_{0}^{1,r}(\Omega)$ and $q_h\in\mathbb{V}_h$ be the solutions to \eqref{eq:adj_eq} and \eqref{eq:discrete_adjoint_problem}, respectively. Then
\begin{equation}\label{eq:error_estimate_fundamental_III}
\|p-q_h\|_{L^2(\Omega\setminus \cup_{t\in\mathcal{D}}\bar{B_{t}})}
\lesssim h^{2}|\log h|^2 \qquad \forall h<h_{*}.
\end{equation}
Here, $B_{t}\subset\Omega$ denotes a suitable open ball centered at $t\in\mathcal{D}$ of strictly positive radius.
\end{theorem}
\begin{proof}
The proof of \eqref{eq:error_estimate_fundamental_III} follows from a combination of the arguments elaborated in the proof of Theorem \ref{thm:error_estimate_fund} and \cite[Lemma 5.5]{MR3973329}. For brevity, we skip details.
\end{proof}

Let $y_{h}\in\mathbb{V}_{h}$ be the unique solution to the discrete problem \eqref{eq:discrete_semilinear} with $f=u_{h}\in \mathbb{U}_{ad} \subset L^\infty(\Omega)$ and define  the discrete variable $p_{h}\in\mathbb{V}_{h}$ as the unique solution to
\begin{equation}
\label{eq:discrete_adjoint_equation}
(\nabla w^{}_{h},\nabla p_{h})_{L^2(\Omega)}\!+\!\left(\tfrac{\partial a}{\partial y}(\cdot,y_{h})p_{h},w^{}_{h}\right)_{L^2(\Omega)}    \!=\!  \sum_{t\in\mathcal{D}}\langle(y_h(t)-y_{t})\delta_{t}, w^{}_{h}\rangle \quad \forall w^{}_{h} \in \mathbb{V}_{h}. \hspace{-0.2cm}
\end{equation}

We present the following error estimate, which will be of importance to perform an a priori error analysis for a suitable discretization of our optimal control problem.

\begin{theorem}[auxiliary error estimate]\label{thm:error_estimates_adjoint_aux}
Let  $a$ be as in the statement of Theorem \ref{thm:error_estimate_fund}. Let $u,{u}_{h}\in L^{2}(\Omega)$ be such that $\|u\|_{L^2(\Omega)} \leq C$ and $\|u_{h}\|_{L^2(\Omega)}  \leq C$, for every $h>0$, where $C>0$.
Let $p$ and ${p}_{h}$ be the solutions to \eqref{eq:adj_eq} and \eqref{eq:discrete_adjoint_equation} with $y=y(u)$ and $y_{h}=y_{h}(u_{h})$, respectively. Then, we have the following error estimate:
\begin{equation}\label{eq:estimate_adj_discrete_continuous}
\|p-p_h\|_{L^2(\Omega)}\lesssim \|u - u_h\|_{L^2(\Omega)} + h^{2-\frac{d}{2}},
\end{equation}
with a hidden constant that is independent of $h$.
\end{theorem}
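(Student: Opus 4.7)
The plan is to insert a continuous auxiliary adjoint and split the error. Define $\tilde{p} \in W_0^{1,r}(\Omega)$, with $r \in [2d/(d+2), d/(d-1))$, as the unique solution to the continuous adjoint problem \eqref{eq:adj_eq} in which $y$ is replaced by the discrete state $y_h = y_h(u_h)$ (both inside the coefficient $\partial a/\partial y(\cdot,\cdot)$ and in the point-evaluation source on the right-hand side). The triangle inequality then gives
\[
\|p - p_h\|_{L^2(\Omega)} \leq \|p - \tilde{p}\|_{L^2(\Omega)} + \|\tilde{p} - p_h\|_{L^2(\Omega)},
\]
and I would estimate each piece using the machinery already developed.

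The second piece is the cleaner one: $p_h$ is precisely the Galerkin approximation of $\tilde{p}$ in $\mathbb{V}_h$, driven by the coefficient $\partial a/\partial y(\cdot, y_h) \in L^\infty(\Omega)$ (nonnegative by \ref{A2}) and the source $\sum_{t}(y_h(t) - y_t)\delta_t$. A direct application of the estimate \eqref{eq:error_estimate_fundamental_II} to this pair yields
\[
\|\tilde{p} - p_h\|_{L^2(\Omega)} \lesssim h^{2-\frac{d}{2}} \sum_{t\in\mathcal{D}}|y_h(t) - y_t| \lesssim h^{2-\frac{d}{2}},
\]
where the second inequality uses the uniform bound on $\|y_h\|_{L^\infty(\Omega)}$ implied by $\|u_h\|_{L^2(\Omega)} \leq C$, Theorem \ref{thm:well_posedness_semilinear}, and the maximum-norm estimate \eqref{eq:global_estimate_state_II}, provided $h$ is small enough.

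For the first piece, subtracting the continuous equations for $p$ and $\tilde{p}$ and adding and subtracting $\partial a/\partial y(\cdot, y)\tilde{p}$ shows that $p - \tilde{p} \in W_0^{1,r}(\Omega)$ solves, for every $w \in W_0^{1,r'}(\Omega)$,
\begin{multline*}
(\nabla w, \nabla(p - \tilde{p}))_{L^2(\Omega)} + \bigl(\tfrac{\partial a}{\partial y}(\cdot, y)(p - \tilde{p}), w\bigr)_{L^2(\Omega)} \\
= \sum_{t\in\mathcal{D}} (y(t) - y_h(t))\, w(t) + \bigl(\bigl[\tfrac{\partial a}{\partial y}(\cdot, y_h) - \tfrac{\partial a}{\partial y}(\cdot, y)\bigr]\tilde{p}, w\bigr)_{L^2(\Omega)}.
\end{multline*}
The well-posedness result of \cite[Theorem 1]{MR812624}, combined with the embedding $W^{1,r}(\Omega) \hookrightarrow L^2(\Omega)$ (valid since $r \geq 2d/(d+2)$), bounds $\|p - \tilde{p}\|_{L^2(\Omega)}$ by the two source contributions. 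The Dirac part is controlled by $\#\mathcal{D}\cdot\|y - y_h\|_{L^\infty(\Omega)}$, and the $L^r$ contribution by the local Lipschitz behavior of $\partial a/\partial y(x,\cdot)$ on the range of $y$ and $y_h$ multiplied by $\|\tilde{p}\|_{L^r(\Omega)}$, the latter being uniformly bounded by the well-posedness of the problem defining $\tilde{p}$.

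It thus remains to bound $\|y - y_h\|_{L^\infty(\Omega)}$, which I would split as $\|y(u) - y(u_h)\|_{L^\infty(\Omega)} + \|y(u_h) - y_h(u_h)\|_{L^\infty(\Omega)}$. Theorem \ref{thm:lipschitz_property} handles the first term by $\lesssim \|u - u_h\|_{L^2(\Omega)}$, while Theorem \ref{thm:error_estimates_state} (specifically the global maximum-norm estimate \eqref{eq:global_estimate_state_II}) handles the second by $\lesssim h^{2-d/2}$. Assembling these estimates yields $\|p - \tilde{p}\|_{L^2(\Omega)} \lesssim \|u - u_h\|_{L^2(\Omega)} + h^{2-d/2}$, and combining with the bound on $\|\tilde{p} - p_h\|_{L^2(\Omega)}$ produces \eqref{eq:estimate_adj_discrete_continuous}. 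The main technical step in the argument is the quantitative $L^\infty$ control of the coefficient difference $\partial a/\partial y(\cdot, y) - \partial a/\partial y(\cdot, y_h)$, for which I would rely on the local Lipschitz regularity of $\partial a/\partial y$ in its second argument that follows from the stronger assumption \ref{A3} used throughout the paper.
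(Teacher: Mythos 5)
Your proposal is correct and follows essentially the same route as the paper's proof: the same intermediate continuous adjoint $\tilde p$ (the paper's $\hat p$) with $y$ replaced by $y_h$, the same Galerkin-orthogonality argument via \eqref{eq:error_estimate_fundamental_II} for the discrete piece, the same perturbation equation for $p-\tilde p$ handled with \cite[Theorem 1]{MR812624} and the embedding $W_0^{1,r}(\Omega)\hookrightarrow L^2(\Omega)$, and the same final splitting of $\|y-y_h\|_{L^\infty(\Omega)}$ through the exact state $y(u_h)$. The paper likewise invokes the local Lipschitz continuity of $\partial a/\partial y$ in $y$ at the same step, so your reliance on \ref{A3} there matches the paper's own argument.
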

\begin{proof}
Let $\hat p\in W_0^{1,r}(\Omega)$, with $r\in [2d/(d+2),d/(d-1))$, be such that
\begin{equation}\label{eq:hat_p}
(\nabla w,\nabla \hat{p})_{L^2(\Omega)}+\left(\tfrac{\partial a}{\partial y}(\cdot,y_{h})\hat{p},w\right)_{L^2(\Omega)} \!  = \! \sum_{t\in\mathcal{D}}\langle(y_h(t)-y_{t})\delta_{t}, w\rangle \quad \forall w \in W_0^{1,r'}(\Omega).
\end{equation}
With this variable at hand, we estimate
$
\|p-p_h\|_{L^2(\Omega)} 
\leq
\|p-\hat{p}\|_{L^2(\Omega)} + \|\hat{p}-p_{h}\|_{L^2(\Omega)}.
$

Let us first bound $\|\hat{p}-p_{h}\|_{L^2(\Omega)}$. Since $p_{h}$ corresponds to the finite element approximation of the auxiliary variable $\hat{p}$ within $\mathbb{V}_{h}$, estimate \eqref{eq:error_estimate_fundamental_II} allows us to obtain
\begin{equation}\label{eq:aux_estimate_vi}
\|\hat{p}-p_{h}\|_{L^2(\Omega)} 
\lesssim 
h^{2-\frac{d}{2}}\sum_{t\in\mathcal{D}}|y_{h}(t)-y_{t}| 
\lesssim
h^{2-\frac{d}{2}}\left(\|y_{h}\|_{L^\infty(\Omega)} + \sum_{t\in\mathcal{D}}|y_{t}|\right).
\end{equation}
Observe that  $\|y_{h}\|_{L^\infty(\Omega)}$ is uniformly bounded. In fact, let  $\hat{y}\in H_0^1(\Omega)\cap L^\infty(\Omega)$ be the solution to \eqref{eq:weak_st_eq} with $u=u_{h}$. Since $y_{h}$ corresponds to the Galerkin approximation of $\hat{y}$ within $\mathbb{V}_h$, the error estimate \eqref{eq:global_estimate_state_II} and Theorem \ref{thm:well_posedness_semilinear} reveal that
\begin{equation}\label{eq:aux_estimate_iii}
\|{y}_{h}\|_{L^\infty(\Omega)}
\leq 
\|\hat{y} - {y}_{h}\|_{L^\infty(\Omega)} + \|\hat{y}\|_{L^\infty(\Omega)} 
\lesssim
(h^{2-\frac{d}{2}} +1) \|u_{h} - a(\cdot,0)\|_{L^2(\Omega)}.
\end{equation}
The assumption on $u_{h}$ thus yields $\|{y}_{h}\|_{L^\infty(\Omega)} \leq C +\| a(\cdot,0)\|_{L^2(\Omega)}$. Replace  this bound into \eqref{eq:aux_estimate_vi}, and the obtained estimate into the one derived for $\|p-p_h\|_{L^2(\Omega)}$ to obtain
\begin{equation}\label{eq:aux_estimate_new}
\|p-p_h\|_{L^2(\Omega)} 
\lesssim
\|p-\hat{p}\|_{L^2(\Omega)} + h^{2-\frac{d}{2}}.
\end{equation}

We now bound $\|p-\hat{p}\|_{L^2(\Omega)}$ in \eqref{eq:aux_estimate_new}. To accomplish this task, we introduce 
\begin{multline*}
\phi:= p - \hat{p}\in W_0^{1,r}(\Omega):
\quad
(\nabla w,\nabla\phi)_{L^2(\Omega)} + \left(\tfrac{\partial a}{\partial y}(\cdot,y)\phi,w\right)_{L^2(\Omega)} \\
= \sum_{t\in\mathcal{D}}\langle (y(t)-{y}_{h}(t) )\delta_{t},w\rangle + \left(\left[\tfrac{\partial a}{\partial y}(\cdot,y_{h})-\tfrac{\partial a}{\partial y}(\cdot,y)\right]\hat{p},w\right)_{L^2(\Omega)}
\quad \forall w\in W_0^{1,r'}(\Omega).
\end{multline*}
An inf-sup condition, which follows from \cite[Theorem 1]{MR812624}, yields the estimate 
\begin{align}\label{eq:stab_phi}
\|\nabla \phi \|_{L^r(\Omega)} 
&\lesssim
\sum_{t\in\mathcal{D}}|y(t)-{y}_{h}(t) | 
+ \left\|\left[\tfrac{\partial a}{\partial y}(\cdot,y_{h})-\tfrac{\partial a}{\partial y}(\cdot,y)\right]\hat{p}\right\|_{L^2(\Omega)} 
\\
&\lesssim \|y-{y}_{h}\|_{L^\infty(\Omega)}(1+\|\hat{p}\|_{L^2(\Omega)}),
\nonumber
\end{align}
upon utilizing that $\partial a/\partial y$ is locally Lipschitz in $y$ \FF{and \eqref{eq:aux_estimate_iii}} . Since $W_{0}^{1,r}(\Omega)\hookrightarrow L^2(\Omega)$ for $r\in [2d/(d+2),d/(d-1))$, a stability estimate for the problem that $\hat{p}$ solves yields
\begin{equation*}
\|\hat{p}\|_{L^2(\Omega)}
\lesssim
\|\nabla \hat{p}\|_{L^r(\Omega)}
\lesssim
\|y_h\|_{L^\infty(\Omega)}+\sum_{t\in\mathcal{D}}|y_{t}|.
\end{equation*}
Replace the estimate for $\|y_h\|_{L^\infty(\Omega)}$ obtained in \eqref{eq:aux_estimate_iii} into this bound and the obtained one into \eqref{eq:stab_phi} to obtain
$
\|\nabla \phi \|_{L^{r}(\Omega)} \lesssim \|y-y_{h}\|_{L^\infty(\Omega)}.
$
Thus, 
\begin{equation}\label{eq:aux_estimate_iv}
\|p-\hat{p}\|_{L^2(\Omega)} = \|\phi\|_{L^2(\Omega)} 
\lesssim \| \nabla \phi \|_{L^r(\Omega)}
 \lesssim \|y-y_{h}\|_{L^\infty(\Omega)},
 \quad
 r \in \left[\tfrac{2d}{d+2},\tfrac{d}{d-1}\right),
\end{equation}
with a hidden constant that is independent of the involved continuous and discrete variables and $h$ but depends on $C$, $a$, and $\{ y_t \}_{t \in \mathcal{D}}$. It thus suffices to bound the term $\|y-y_{h}\|_{L^\infty(\Omega)}$. Invoke $\hat{y}$ and write
$
\|y-y_{h}\|_{L^\infty(\Omega)}\leq \|y-\hat{y}\|_{L^\infty(\Omega)}+\|\hat{y}-y_{h}\|_{L^\infty(\Omega)}.
$
In view of the Lipschitz property \eqref{eq:lipschitz_prop} and the estimate \eqref{eq:global_estimate_state_II}, we arrive at the bound
\begin{equation}\label{eq:aux_estimate_v}
\|y-y_{h}\|_{L^\infty(\Omega)}
\lesssim 
\|u - u_{h}\|_{L^2(\Omega)} + h^{2-\frac{d}{2}}\|u_{h} - a(\cdot,0)\|_{L^2(\Omega)}.
\end{equation}
Replacing the estimate \eqref{eq:aux_estimate_v} into \eqref{eq:aux_estimate_iv} and the obtained one into \eqref{eq:aux_estimate_new}, and taking into account the assumption on $u_{h}$, we conclude the desired estimate \eqref{eq:estimate_adj_discrete_continuous}.
\end{proof}

%%%%%%%%%%%%%%%%%%%%%%%%%%%%%%%%%%%%%%%%%%%%%%%%%%%%%%%%%%%%
%%%%%%%%%%%%%%%%%%%%%%%%%%%%%%%%%%%%%%%%%%%%%%%%%%%%%%%%%%%%
%%%%%%%%%%%%%%%%%%%%%%%%%%%%%%%%%%%%%%%%%%%%%%%%%%%%%%%%%%%%
%%%%%%%%%%%%%%%%%%%%%%%%%%%%%%%%%%%%%%%%%%%%%%%%%%%%%%%%%%%%

\subsection{Discretization of the control problem}\label{sec:fem_control_problem}
We propose two strategies to discretize \eqref{def:weak_ocp}--\eqref{eq:weak_st_eq}:  a semidiscrete approach where the control is not  discretized -- the  so-called  variational  discretization  approach  --  and  a  fully  discrete approach where the control variable is discretized with piecewise constant functions.

\subsubsection{A fully discretization scheme}
\label{sec:fully_discrete}
We propose the following fully discrete approximation of problem \eqref{def:weak_ocp}--\eqref{eq:weak_st_eq}: Find $\min J(y_{h},u_{h})$ subject to 
\begin{equation}
\label{eq:discrete_state_equation}
y_h \in \mathbb{V}_h: \quad
(\nabla y_h,\nabla v_h)_{L^2(\Omega)}+(a(\cdot,y_h),v_h)_{L^2(\Omega)}  =  (u_h,v_h)^{}_{L^2(\Omega)} \quad \forall v_{h}\in \mathbb{V}_{h},
\end{equation}
and the discrete constraints $u_{h} \in \mathbb{U}_{ad,h}$. Here, 
$
\mathbb{U}_{ad,h}:=\mathbb{U}_{h}\cap \mathbb{U}_{ad},
$
where $\mathbb{U}_{h}=\{ u_h \in L^\infty(\Omega): u_{h}|^{}_T\in \mathbb{P}_0(T) \ \forall T\in \T_{h}\}$. We recall that $\mathbb{V}_h$ is defined as in \eqref{def:piecewise_linear_set}. 

The existence of a solution follows from the compactness of $\mathbb{U}_{ad,h}$ and the continuity of $J$. Let us introduce the discrete control to state map $\mathcal{S}_{h}:  \mathbb{U}_{h} \ni u_h \mapsto y_h \in \mathbb{V}_{h}$, where $y_h$ solves \eqref{eq:discrete_state_equation}, and the reduced cost functional $j_{h}(u_{h}):=J(\mathcal{S}_{h}u_{h},u_{h})$. With these ingredients at hand, first order optimality conditions for the fully discrete optimal control problem reads as follows: If $\bar{u}_{h}$ denotes a local solution, then
\begin{equation}
\label{eq:discrete_var_ineq}
j_{h}^{\prime}(\bar{u}_{h})(u_{h}-\bar{u}_{h}) = (\bar{p}_{h}+\alpha\bar{u}_{h},u_{h}-\bar{u}_{h})_{L^2(\Omega)}  \geq  0 \quad \forall u_{h} \in \mathbb{U}_{ad,h},
\end{equation}
where $\bar{p}_{h} \in \mathbb{V}_{h}$ solves the discrete problem \eqref{eq:discrete_adjoint_equation} with $y_{h}$ replaced by $\bar{y}_{h}:=\mathcal{S}_{h}\bar{u}_{h}$.

\subsubsection{The variational discretization approach}
\label{sec:variational_discretization}

In this section, we propose a semidiscrete scheme that is based on the so-called variational discretization approach \cite{MR2122182}. This scheme discretizes only the state space (the control space is not discretized) and induces a discretization of optimal controls by projecting the optimal discrete adjoint state into the admissible control set $\mathbb{U}_{ad}$.

The scheme is defined as follows: Find $\min J(y_h,\mathfrak{u})$ subject to 
\begin{equation}
\label{eq:discrete_state_equation_var}
y_h \in \mathbb{V}_h: \quad
(\nabla y_h,\nabla v_h)_{L^2(\Omega)}+(a(\cdot,y_h),v_h)_{L^2(\Omega)}  =  (\mathfrak{u},v_h)^{}_{L^2(\Omega)} \quad \forall v_{h}\in \mathbb{V}_{h},
\end{equation}
and the control constraints $\mathfrak{u}\in \mathbb{U}_{ad}$. The existence of a discrete solution and first order optimality conditions for the semidiscrete scheme follow standard arguments. In particular, if $\bar{\mathfrak{u}}$ denotes a local solution, then
\begin{equation}
\label{eq:discrete_var_ineq_variational}
j_{h}^{\prime}(\bar{\mathfrak{u}})(u-\bar{\mathfrak{u}}) = (\bar{p}_{h}+\alpha\bar{\mathfrak{u}},u-\bar{\mathfrak{u}})_{L^2(\Omega)}  \geq  0 \quad \forall u \in \mathbb{U}_{ad},
\end{equation}
where $\bar{p}_{h} \in \mathbb{V}_{h}$ solves the discrete problem \eqref{eq:discrete_adjoint_equation} with $y_{h}$ replaced by $\bar{y}_h$. Here, $\bar{y}_h$ denotes the solution to \eqref{eq:discrete_state_equation_var} with $\mathfrak{u}$ replaced by $\bar{\mathfrak{u}}$. Notice that, in view of the variational inequality \eqref{eq:discrete_var_ineq_variational}, the following projection formula holds \cite[section 4.6]{Troltzsch}:
\begin{equation}\label{eq:projection_control_var} 
\bar{\mathfrak{u}}(x):=\Pi_{[\texttt{a},\texttt{b}]}(-\alpha^{-1}\bar{p}_h(x)) \textrm{ a.e. } x \in \Omega.
\end{equation}
Since $\bar{\mathfrak{u}}$ implicitly depends on $h$, in what follows we will adopt the notation $\bar{\mathfrak{u}}_h$.

\section{Convergence of discretizations}\label{sec:conver_of_disc}
We begin with the following error bounds, the proofs of which can be found  in \cite[Lemmas 37 and 38]{MR3586845}.

\begin{theorem}[auxiliary error estimate]\label{thm:error_estimates_state_aux}
Let $\Omega$ be a convex polytope. Assume that \textnormal{\ref{A1}}, \textnormal{\ref{A2}}, and \textnormal{\ref{A3}} hold. Let $u \in \mathbb{U}_{ad}$ and ${u}_{h} \in \mathbb{U}_{ad,h} \subset \mathbb{U}_{ad}$.
Let $y = y(u)$ be the solution to \eqref{eq:weak_st_eq} and let ${y}_{h} = y_h(u_h)$ be the solution to  \eqref{eq:discrete_state_equation}. Then, 
\begin{equation*}
\| \nabla( y-y_h)\|_{L^2(\Omega)} \lesssim h + \|u - u_h\|_{L^2(\Omega)},
\quad
\|y-y_h\|_{L^\infty(\Omega)} \lesssim h^{2-\frac{d}{2}} + \|u - u_h\|_{L^2(\Omega)}.
\end{equation*}
In addition, if $u_h\rightharpoonup u$ in $L^s(\Omega)$ as $h \downarrow 0$, with $s>d/2$, then $y_{h}\rightarrow y$ in $H_0^1(\Omega)\cap C(\bar{\Omega})$ as $h \downarrow 0$ and $j(u) \leq \liminf_{h \downarrow 0} j_h(u_h)$.
\end{theorem}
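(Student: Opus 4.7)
The two error bounds come from a triangle-inequality split that separates the data perturbation from the pure discretization error. I introduce the auxiliary continuous state $\tilde{y}:=\mathcal{S}u_{h}\in H^{2}(\Omega)\cap H_{0}^{1}(\Omega)$, the solution of \eqref{eq:weak_st_eq} with $u$ replaced by $u_{h}$, and write $y-y_{h}=(y-\tilde{y})+(\tilde{y}-y_{h})$. Theorem \ref{thm:lipschitz_property} applied with $f_{1}=u$, $f_{2}=u_{h}$ handles the first piece: $\|\nabla(y-\tilde{y})\|_{L^{2}(\Omega)}+\|y-\tilde{y}\|_{L^{\infty}(\Omega)}\lesssim\|u-u_{h}\|_{L^{2}(\Omega)}$, with constants uniform in $h$ because $\mathbb{U}_{ad}$ is $L^{\infty}$-bounded. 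For the second piece, $y_{h}$ is precisely the Galerkin approximation of $\tilde{y}$ on $\mathbb{V}_{h}$: estimate \eqref{eq:global_estimate_state_II} of Theorem \ref{thm:error_estimates_state} gives $\|\tilde{y}-y_{h}\|_{L^{\infty}(\Omega)}\lesssim h^{2-d/2}\|u_{h}-a(\cdot,0)\|_{L^{2}(\Omega)}\lesssim h^{2-d/2}$, and a standard C\'ea-type argument (Galerkin orthogonality, testing with $y_{h}-\Pi_{h}\tilde{y}$ for a stable interpolant $\Pi_{h}$, and monotonicity \ref{A2}) delivers $\|\nabla(\tilde{y}-y_{h})\|_{L^{2}(\Omega)}\lesssim h\,\|\tilde{y}\|_{H^{2}(\Omega)}\lesssim h$.

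\textbf{Weak limit and $H_{0}^{1}$-convergence.} Under $u_{h}\rightharpoonup u$ in $L^{s}(\Omega)$ with $s>d/2$, the uniform $L^{\infty}$-bound on $\mathbb{U}_{ad}$ combined with energy testing of \eqref{eq:discrete_state_equation} (using $(a(\cdot,y_{h})-a(\cdot,0))\,y_{h}\geq 0$ from \ref{A2}) and a discrete Stampacchia truncation argument gives $\{y_{h}\}$ bounded in $H_{0}^{1}(\Omega)\cap L^{\infty}(\Omega)$. Extract a subsequence with $y_{h}\rightharpoonup y^{\star}$ in $H_{0}^{1}(\Omega)$ and $y_{h}\to y^{\star}$ in $L^{2}(\Omega)$; local Lipschitzianity of $a$ lifts this to $a(\cdot,y_{h})\to a(\cdot,y^{\star})$ in $L^{2}(\Omega)$. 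Inserting the interpolant of a test $v\in C_{0}^{\infty}(\Omega)$ into \eqref{eq:discrete_state_equation} and passing to the limit yields $y^{\star}=\mathcal{S}u=y$ by uniqueness, so the entire sequence converges. To promote this to strong $H_{0}^{1}$-convergence, set $\tilde{y}_{h}:=\mathcal{S}_{h}u$; Theorem \ref{thm:error_estimates_state} gives $\tilde{y}_{h}\to y$ in $H_{0}^{1}(\Omega)$, and the defect $e_{h}:=y_{h}-\tilde{y}_{h}\in\mathbb{V}_{h}$ satisfies
\begin{equation*}
(\nabla e_{h},\nabla v_{h})_{L^{2}(\Omega)}+(a(\cdot,y_{h})-a(\cdot,\tilde{y}_{h}),v_{h})_{L^{2}(\Omega)}=(u_{h}-u,v_{h})_{L^{2}(\Omega)}\quad\forall v_{h}\in\mathbb{V}_{h}.
\end{equation*}
Testing with $v_{h}=e_{h}$ and using \ref{A2} yields $\|\nabla e_{h}\|_{L^{2}(\Omega)}^{2}\leq(u_{h}-u,e_{h})_{L^{2}(\Omega)}$. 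Since $s>d/2$ forces its H\"older conjugate $s'<d/(d-2)$, the compact embedding $H_{0}^{1}(\Omega)\hookrightarrow L^{s'}(\Omega)$ together with weak convergence of $u_{h}-u$ in $L^{s}(\Omega)$ sends the right-hand side to zero, so $e_{h}\to 0$ in $H_{0}^{1}(\Omega)$.

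\textbf{$C(\bar\Omega)$-convergence and liminf.} For uniform convergence, introduce the continuous auxiliary states $\hat{y}_{h}:=\mathcal{S}u_{h}$. Estimate \eqref{eq:global_estimate_state_II} yields $\|\hat{y}_{h}-y_{h}\|_{L^{\infty}(\Omega)}\lesssim h^{2-d/2}\to 0$, while Theorem \ref{thm:well_posedness_semilinear} gives the uniform bound $\|\hat{y}_{h}\|_{H^{2}(\Omega)}\lesssim\|u_{h}-a(\cdot,0)\|_{L^{2}(\Omega)}\lesssim 1$. The Sobolev embedding $H^{2}(\Omega)\hookrightarrow C^{0,\alpha}(\bar{\Omega})$ for some $\alpha>0$ (valid on the convex $\Omega$) together with Arzel\`a--Ascoli makes $\{\hat{y}_{h}\}$ precompact in $C(\bar{\Omega})$, and any cluster point solves \eqref{eq:weak_st_eq} with datum $u$ (by weak convergence of the source and uniform convergence of the nonlinearity), so equals $y$ by uniqueness. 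The liminf property follows at once: $y_{h}(t)\to y(t)$ for every $t\in\mathcal{D}$ yields convergence of the pointwise-tracking terms, and the uniform $L^{\infty}$-bound on $u_{h}$ promotes the weak convergence in $L^{s}(\Omega)$ to weak convergence in $L^{2}(\Omega)$, whence weak lower semicontinuity of $\|\cdot\|_{L^{2}(\Omega)}^{2}$ gives $j(u)\leq\liminf_{h\downarrow 0}j_{h}(u_{h})$.

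\textbf{Main obstacle.} The genuinely delicate step is the $C(\bar{\Omega})$-convergence of $y_{h}$ to $y$: the $L^{\infty}$ error bound carries the residue $\|u-u_{h}\|_{L^{2}(\Omega)}$, which need not vanish under weak convergence alone, and Theorem \ref{thm:lipschitz_property} cannot be applied directly. The remedy is to transfer the analysis to the continuous auxiliary states $\hat{y}_{h}=\mathcal{S}u_{h}$ and exploit their uniform $H^{2}$-regularity---that is, the compactness of the continuous solution operator with respect to weak convergence of its source---as the structural ingredient that closes the argument.
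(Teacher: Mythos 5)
Your proof is correct. Note that the paper offers no proof of this theorem at all: it simply cites \cite[Lemmas 37 and 38]{MR3586845}, so there is no internal argument to compare against. Your self-contained argument follows the standard route used in that literature: the splitting $y-y_h=(y-\mathcal{S}u_h)+(\mathcal{S}u_h-y_h)$ isolates the data perturbation (handled by the Lipschitz stability of Theorem \ref{thm:lipschitz_property}) from the pure Galerkin error (handled by Theorem \ref{thm:error_estimates_state} and a monotone C\'ea argument), and the uniform $L^\infty$-bound on $\mathbb{U}_{ad}$ makes all constants $h$-independent. The passage to the weak-convergence statement is also sound; in particular your identification of the key difficulty is apt --- the quantitative bounds carry the residue $\|u-u_h\|_{L^2(\Omega)}$, which does not vanish under weak convergence, so one must route the $C(\bar\Omega)$-convergence through the compactness of the continuous solution map (uniform $H^2$-bounds on $\mathcal{S}u_h$ plus $H^2(\Omega)\hookrightarrow\hookrightarrow C(\bar\Omega)$ for $d\le 3$), exactly as is done in the cited reference. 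Two small remarks: the theorem as stated in the paper assumes only \textnormal{\ref{A1}}--\textnormal{\ref{A2}}, whereas your appeal to Theorem \ref{thm:lipschitz_property} formally requires the uniform local bound $|\partial a/\partial y|\le C_{\mathpzc{m}}$ from \textnormal{\ref{A3}}; this mismatch is inherited from the paper's own hypotheses rather than introduced by you, but it is worth acknowledging. Also, Theorem \ref{thm:error_estimates_state} only records $L^2$ and $L^\infty$ error bounds, so the claim ``Theorem \ref{thm:error_estimates_state} gives $\tilde y_h\to y$ in $H_0^1(\Omega)$'' should instead point to the C\'ea-type estimate you derive yourself earlier in the proof; with that attribution fixed the argument is complete.
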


\subsection{The fully discrete scheme: convergence of discretizations}

In what follows we provide a convergence result that, in essence, guarantees that a sequence of discrete global solutions $\{ \bar u_h \}_{h>0}$ contains subsequences that converges, as $h \downarrow 0$, to global solutions of problem \eqref{def:weak_ocp}--\eqref{eq:weak_st_eq}.

\begin{theorem}[convergence of global solutions]
\label{thm:convergence_discrete_sol}
Assume that \textnormal{\ref{A1}}, \textnormal{\ref{A2}}, and \textnormal{\ref{A3}} hold. Let $h>0$ and $\bar{u}_h\in\mathbb{U}_{ad,h}$ be a global solution of the fully discrete optimal control problem. Then, there exist nonrelabeled subsequences of $\{\bar{u}_{h}\}_{h>0}$ such that $\bar u_h \mathrel{\ensurestackMath{\stackon[1pt]{\rightharpoonup}{\scriptstyle\ast}}} \bar u$ in the weak$^\star$ topology of $L^\infty(\Omega)$, as $h \downarrow 0$, and $\bar u$ corresponds to a global solution of the optimal control problem \eqref{def:weak_ocp}--\eqref{eq:weak_st_eq}. 
In addition, we have
\begin{equation}\label{eq:discrete_cont_convergence}
\lim_{h\to 0}\|\bar{u}-\bar{u}_{h}\|_{L^2(\Omega)}  = 0, \qquad
\lim_{h\to 0}j_{h}(\bar{u}_{h}) = j(\bar{u}).
\end{equation}
\end{theorem}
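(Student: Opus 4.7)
I will follow a standard three–step variational argument for convergence of discrete minimizers of a nonconvex optimal control problem.

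\emph{Step 1: Extraction of a weak-$\ast$ limit.} Since each $\bar u_h$ satisfies $\texttt{a} \leq \bar u_h \leq \texttt{b}$ a.e., the family $\{\bar u_h\}_{h>0}$ is uniformly bounded in $L^\infty(\Omega)$. The Banach--Alaoglu theorem gives a (nonrelabeled) subsequence and $\bar u \in L^\infty(\Omega)$ with $\bar u_h \rightharpoonup^{\ast} \bar u$ in $L^\infty(\Omega)$; the convexity and $L^2$-closedness of $\mathbb{U}_{ad}$ yield $\bar u \in \mathbb{U}_{ad}$. Since $\Omega$ is bounded, this weak-$\ast$ convergence implies $\bar u_h \rightharpoonup \bar u$ in every $L^s(\Omega)$ with $s \in (1,\infty)$, and in particular for $s = 2 > d/2$.

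\emph{Step 2: Optimality of $\bar u$ and $\lim_h j_h(\bar u_h) = j(\bar u)$.} Fix a global minimizer $u^\star$ of \eqref{def:weak_ocp}--\eqref{eq:weak_st_eq} (whose existence is guaranteed by Theorem \ref{thm:existence_control}) and let $u_h^\star \in \mathbb{U}_h$ be its $L^2$-projection onto $\mathbb{U}_h$. Cell averaging preserves the pointwise bounds $[\texttt{a},\texttt{b}]$, so $u_h^\star \in \mathbb{U}_{ad,h}$, and classical approximation theory yields $u_h^\star \to u^\star$ strongly in $L^2(\Omega)$. Applying Theorem \ref{thm:error_estimates_state_aux} with $s=2$ we obtain both $\mathcal{S}_h u_h^\star \to \mathcal{S} u^\star$ and $\bar y_h := \mathcal{S}_h \bar u_h \to \bar y := \mathcal{S}\bar u$ in $C(\bar\Omega)$; the second convergence combined with the lower semicontinuity statement in the same theorem gives $j(\bar u) \leq \liminf_{h} j_h(\bar u_h)$, whereas the first one, together with the strong $L^2$-convergence $u_h^\star \to u^\star$, yields $j_h(u_h^\star) \to j(u^\star)$. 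Discrete global optimality $j_h(\bar u_h) \leq j_h(u_h^\star)$ and continuous global optimality $j(u^\star) \leq j(\bar u)$ (valid since $\bar u \in \mathbb{U}_{ad}$) close the chain
\begin{equation*}
j(\bar u) \leq \liminf_{h \downarrow 0} j_h(\bar u_h) \leq \limsup_{h \downarrow 0} j_h(\bar u_h) \leq \lim_{h \downarrow 0} j_h(u_h^\star) = j(u^\star) \leq j(\bar u).
\end{equation*}
Hence $\bar u$ is itself a global (a fortiori local) solution of \eqref{def:weak_ocp}--\eqref{eq:weak_st_eq}, and the second limit in \eqref{eq:discrete_cont_convergence} follows.

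\emph{Step 3: Strong $L^2$-convergence of $\bar u_h$.} From
\begin{equation*}
j_h(\bar u_h) = \tfrac12 \sum_{t\in \mathcal{D}} (\bar y_h(t) - y_t)^2 + \tfrac{\alpha}{2} \|\bar u_h\|^2_{L^2(\Omega)},
\end{equation*}
the uniform convergence $\bar y_h \to \bar y$ makes the tracking sum pass to $\tfrac12 \sum_t (\bar y(t) - y_t)^2$, while $j_h(\bar u_h) \to j(\bar u)$ from Step 2 then forces $\|\bar u_h\|_{L^2(\Omega)} \to \|\bar u\|_{L^2(\Omega)}$. Combining norm convergence with the weak convergence $\bar u_h \rightharpoonup \bar u$ in $L^2(\Omega)$ (Radon--Riesz property) gives $\bar u_h \to \bar u$ strongly in $L^2(\Omega)$, which is the first limit in \eqref{eq:discrete_cont_convergence}.

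\emph{Main obstacle.} The only subtle point is establishing the upper bound $\limsup_h j_h(\bar u_h) \leq j(\bar u)$; this cannot follow from the weak convergence of $\bar u_h$ alone, since $\|\cdot\|_{L^2(\Omega)}^2$ is only weakly lower semicontinuous. The construction of a recovery sequence $u_h^\star \to u^\star$ around a continuous global minimizer circumvents this by providing the matching upper bound $j_h(u_h^\star) \to j(u^\star) \leq j(\bar u)$, which both identifies $\bar u$ as optimal and fuels the Radon--Riesz upgrade to strong convergence in Step 3.
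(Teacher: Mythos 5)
Your proposal is correct and follows essentially the same route as the paper: extraction of a weak-$\ast$ limit, a recovery sequence built from the $L^2$-projection of a continuous global minimizer to close the chain of inequalities via Theorem \ref{thm:error_estimates_state_aux}, and a Radon--Riesz argument using the uniform convergence of the discrete states to upgrade to strong $L^2$-convergence. The only cosmetic difference is that the paper justifies $\Pi_{L^2}\tilde u \to \tilde u$ in $L^2(\Omega)$ by invoking the $H^1$-regularity of the optimal control (Theorem \ref{thm:extra_regul_control}), whereas you appeal to generic approximation properties of the $L^2$-projection, which suffices here.
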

\begin{proof}
Since, for every $h>0$, $\bar{u}_{h}\in \mathbb{U}_{ad,h}\subset \mathbb{U}_{ad}$, the sequence $\{\bar{u}_{h}\}_{h>0}$ is uniformly bounded in $L^\infty(\Omega)$. Then, there exists a nonrelabeled subsequence such that $\bar{u}_{h} \mathrel{\ensurestackMath{\stackon[1pt]{\rightharpoonup}{\scriptstyle\ast}}} \bar{u}$ in $L^\infty(\Omega)$ as $h\downarrow 0$. In what follows, we prove that $\bar{u}\in \mathbb{U}_{ad}$ is a solution to the optimal control problem \eqref{def:weak_ocp}--\eqref{eq:weak_st_eq} and that the convergence results in \eqref{eq:discrete_cont_convergence} hold.

Let $\tilde{u}\in \mathbb{U}_{ad}$ be a global solution to \eqref{def:weak_ocp}--\eqref{eq:weak_st_eq}. Let $\Pi_{L^2}:L^2(\Omega)\to \mathbb{U}_{h}$ be the orthogonal projection operator into piecewise constant functions over $\T_h$. Notice that, for $v \in L^2(\Omega)$ and $T\in\T$, we have $\Pi_{L^2}v|_T:= \int_{T} v  \mathrm{d}x/ |T|$. Define $\tilde u_h:= \Pi_{L^2}\tilde{u}\in \mathbb{U}_{ad,h}$. Since Theorem \ref{thm:extra_regul_control} guarantees that $\tilde{u}\in H^1(\Omega)$, we immediately conclude that $\|\tilde{u}-\tilde u_h\|_{L^2(\Omega)}\to 0$ as $h\downarrow 0$. We thus invoke, the global optimality of $\tilde{u}$, Theorem \ref{thm:error_estimates_state_aux}, the global optimality of $\bar{u}_{h}$, and the convergence result $\tilde{u}_{h} \rightarrow \tilde{u}$ in $L^2(\Omega)$ to obtain
\[
j(\tilde{u}) \leq j(\bar{u}) \leq \liminf_{h \downarrow 0} j_{h}(\bar{u}_{h}) \leq \limsup_{h \downarrow 0} j_{h}(\bar{u}_{h}) \leq \limsup_{h \downarrow 0} j_{h}(\tilde u_h) =  j(\tilde{u}).
\]
This proves that $\bar{u}$ is a global solution of \eqref{def:weak_ocp}--\eqref{eq:weak_st_eq} and that $\lim_{h\downarrow 0}j_{h}(\bar{u}_{h})= j(\bar{u})$.

Finally, observe that Theorem \ref{thm:error_estimates_state_aux} yields $\bar{y}_{h} \to \bar{y}$ in $C(\bar \Omega)$ and that this implies
\[
\sum_{t \in \mathcal{D}} (\bar{y}_h(t) - y_t)^2 \rightarrow \sum_{t \in \mathcal{D}} (\bar{y}(t) - y_t)^2,
\quad h \downarrow 0.
\]
Since $j_{h}(\bar{u}_{h}) \rightarrow j(\bar{u})$, we can thus conclude that
$
\| \bar u_h \|^2_{L^2(\Omega)} \rightarrow \| \bar u \|^2_{L^2(\Omega)}
$
as $h \downarrow 0$. The weak convergence $\bar u_h \rightharpoonup \bar u$ in $L^2(\Omega)$, as $h \downarrow 0$, allows us to conclude.
\end{proof}

In what follows, we prove that strict local solutions of problem \eqref{def:weak_ocp}--\eqref{eq:weak_st_eq}  can be approximated by local solutions of the fully discrete optimal control problems.

\begin{theorem}[convergence of local solutions]
\label{thm:convergence_discrete_sol_local}
Assume that \textnormal{\ref{A1}}, \textnormal{\ref{A2}}, and \textnormal{\ref{A3}} hold. Let $\bar{u}\in\mathbb{U}_{ad}$ be a strict local minimum of \eqref{def:weak_ocp}--\eqref{eq:weak_st_eq}. Then, there exists a sequence of local minima $\{\bar{u}_h\}_{h>0}$ of the fully discrete scheme satisfying \eqref{eq:discrete_cont_convergence}.
\end{theorem}
\begin{proof}
Since $\bar{u}$ is a strict local minimum of \eqref{def:weak_ocp}--\eqref{eq:weak_st_eq}, there exists $\varepsilon > 0$ such that $\min\{ j(u): u \in\mathbb{U}_{ad}\cap B_{\varepsilon}(\bar{u})\}$ admits $\bar u$ as a unique solution. Here, $B_{\varepsilon}(\bar{u}):=\{ u \in L^2(\Omega): \|\bar{u}-u\|_{L^2(\Omega)}\leq \varepsilon\}$. On the other hand, let us consider, for $h>0$, the discrete problem: Find $\min\{j_{h}(u_h): u_h\in\mathbb{U}_{ad,h}\cap B_{\varepsilon}(\bar{u})\}$. There exists $h_{\varepsilon}$ such that this problem admits a solution for $h \leq h_{\varepsilon}$. In fact, $\Pi_{L^2} \bar u \in \mathbb{U}_{ad,h}\cap B_{\varepsilon}(\bar{u})$ for $h$ sufficiently small. As a result, $ \mathbb{U}_{ad,h}\cap B_{\varepsilon}(\bar{u})$ is closed, bounded, and nonempty in $L^2(\Omega)$.

Let $h \leq h_{\varepsilon}$ and let $\bar u_h$ be a global solution of $\min\{j_{h}(u_h): u_h\in\mathbb{U}_{ad,h}\cap B_{\varepsilon}(\bar{u})\}$. We proceed as in the proof of Theorem \ref{thm:convergence_discrete_sol} to conclude the existence of a subsequence of $\{\bar{u}_h\}_{h\leq h_\varepsilon}$ such that it converges strongly in $L^2(\Omega)$ to a solution of $\min\{ j(u): u \in\mathbb{U}_{ad}\cap B_{\varepsilon}(\bar{u})\}$. Since the latter problem admits a unique solution $\bar{u}$, we must have $\bar u_h \rightarrow \bar u$ in $L^2(\Omega)$ as $h \downarrow 0$. This implies that the constraint $\bar{u}_{h} \in B_{\varepsilon}(\bar{u})$ is not active for small $h$. As a result, $\bar{u}_h$ is a local solution of the fully discrete scheme and the convergence properties in \eqref{eq:discrete_cont_convergence} hold.
\end{proof}

%%%%%%%%%%%%%%%%%%%%%%%%%%%%%%%%%%%%%%%%%%%%%%%%%%%%%%%%%%%%%%%%%%
%%%%%%%%%%%%%%%%%%%%%%%%%%%%%%%%%%%%%%%%%%%%%%%%%%%%%%%%%%%%%%%%%%
%%%%%%%%%%%%%%%%%%%%%%%%%%%%%%%%%%%%%%%%%%%%%%%%%%%%%%%%%%%%%%%%%%
%%%%%%%%%%%%%%%%%%%%%%%%%%%%%%%%%%%%%%%%%%%%%%%%%%%%%%%%%%%%%%%%%%
%%%%%%%%%%%%%%%%%%%%%%%%%%%%%%%%%%%%%%%%%%%%%%%%%%%%%%%%%%%%%%%%%%
%%%%%%%%%%%%%%%%%%%%%%%%%%%%%%%%%%%%%%%%%%%%%%%%%%%%%%%%%%%%%%%%%%
%%%%%%%%%%%%%%%%%%%%%%%%%%%%%%%%%%%%%%%%%%%%%%%%%%%%%%%%%%%%%%%%%%
%%%%%%%%%%%%%%%%%%%%%%%%%%%%%%%%%%%%%%%%%%%%%%%%%%%%%%%%%%%%%%%%%%

\section{Error estimates}\label{sec:error_estimates}

%%%%%%%%%%%%%%%%%%%%%%%%%%%%%%%%%%%%%%%%%%%%%%%%%%%%%%%%%%%%%%%%%%
%%%%%%%%%%%%%%%%%%%%%%%%%%%%%%%%%%%%%%%%%%%%%%%%%%%%%%%%%%%%%%%%%%
%%%%%%%%%%%%%%%%%%%%%%%%%%%%%%%%%%%%%%%%%%%%%%%%%%%%%%%%%%%%%%%%%%
%%%%%%%%%%%%%%%%%%%%%%%%%%%%%%%%%%%%%%%%%%%%%%%%%%%%%%%%%%%%%%%%%%

In this section, we derive error estimates for the fully and semidiscrete schemes introduced in sections \ref{sec:fully_discrete} and \ref{sec:variational_discretization}, respectively.

\subsection{Error estimates for the fully discrete scheme}

Let $ \{ \bar{u}_h \}_{h>0} \subset \mathbb{U}_{ad,h}$ be a sequence of local minima of the fully discrete control problems such that $\bar{u}_h \to \bar{u}$ in $L^2(\Omega)$, as $h \downarrow 0$, where $\bar{u}\in\mathbb{U}_{ad}$ is a local solution of \eqref{def:weak_ocp}--\eqref{eq:weak_st_eq}; see Theorems \ref{thm:convergence_discrete_sol} and \ref{thm:convergence_discrete_sol_local}. The main goal of this section is to derive an estimate for $\bar u - \bar u_h$ in $L^2(\Omega)$.

\begin{theorem}[error estimate]\label{thm:control_estimate}
Assume that \textnormal{\ref{A1}}, \textnormal{\ref{A2}}, and \textnormal{\ref{A3}} hold. Let $a(\cdot, 0)\in L^\infty(\Omega)$. If $\bar{u}\in\mathbb{U}_{ad}$ satisfies the sufficient second order optimality condition \eqref{eq:second_order_2_2}, then there exists $h_{\ddagger} > 0$ such that the following estimate holds:
\begin{equation}\label{eq:control_error_estimate}
\|\bar{u}-\bar{u}_h\|_{L^2(\Omega)}\lesssim h|\log h| 
\qquad \forall h < h_{\ddagger},
\end{equation}
where the hidden constant is independent of $h$.
\end{theorem}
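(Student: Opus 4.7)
The plan is to combine the second-order sufficient condition of Theorem~\ref{thm:equivalent_opt_cond}, a mean value theorem applied to $j'$, the continuous and discrete variational inequalities \eqref{eq:var_ineq} and \eqref{eq:discrete_var_ineq}, and the consistency estimates of Theorems~\ref{thm:error_estimate_fund} and \ref{thm:error_estimates_adjoint_aux}. The structure is classical for second-order-sufficient-based rates, but the argument must work around the fact that $\bar p$ lies only in $W_0^{1,r}(\Omega)$ with $r<d/(d-1)$, so $L^\infty$-convergence of the discrete adjoint is unavailable.

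First I would invoke Theorem~\ref{thm:convergence_discrete_sol} to fix a sequence with $\bar u_h \to \bar u$ in $L^2(\Omega)$, set $v_h := \bar u_h - \bar u$, and extract the constants $\mu,\tau>0$ from Theorem~\ref{thm:equivalent_opt_cond}. Since $\bar u, \bar u_h \in \mathbb{U}_{ad}$, the function $v_h$ satisfies the sign condition \eqref{eq:sign_cond} automatically; the only obstruction to $v_h\in C_{\bar u}^\tau$ is the vanishing requirement on $\Omega^\tau := \{|\bar{\mathfrak p}|>\tau\}$, where \eqref{eq:projection_control} forces $\bar u \in \{\texttt a, \texttt b\}$. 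Using the elementwise projection formula implied by \eqref{eq:discrete_var_ineq} (obtained by averaging $\bar p_h$ over each simplex) together with the $L^2$-convergence of $\bar p_h$ to $\bar p$ from Theorem~\ref{thm:error_estimates_adjoint_aux}, I would argue that the restriction $v_h|_{\Omega^\tau}$ is small, so that $v_h$ differs from an element of $C_{\bar u}^\tau$ by a term absorbable on the right-hand side. Combined with the local Lipschitz property \eqref{eq:continuity_of_j2} of $j''$ applied at an intermediate point, this yields, for $h$ sufficiently small,
\begin{equation*}
\tfrac{\mu}{2}\|v_h\|_{L^2(\Omega)}^2 \leq j''(\hat u_h)v_h^2 + o(1)\,\|v_h\|_{L^2(\Omega)}^2, \qquad \hat u_h = \bar u + \theta_h v_h,\ \theta_h\in(0,1).
\end{equation*}

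Next, the mean value theorem gives $j''(\hat u_h)v_h^2 = (j'(\bar u_h)-j'(\bar u))(v_h)$. The continuous inequality \eqref{eq:var_ineq} with $u=\bar u_h\in\mathbb{U}_{ad}$ furnishes $j'(\bar u)(v_h)\geq 0$, while the $L^2$-projection $\Pi_{L^2}\bar u$ belongs to $\mathbb{U}_{ad,h}$ (because $\texttt a\leq\bar u\leq \texttt b$ is preserved by averaging), so \eqref{eq:discrete_var_ineq} gives $j_h'(\bar u_h)(\Pi_{L^2}\bar u-\bar u_h)\geq 0$. Adding and regrouping,
\begin{equation*}
j''(\hat u_h)v_h^2 \leq \bigl(j'(\bar u_h)-j_h'(\bar u_h)\bigr)(v_h) + j_h'(\bar u_h)(\Pi_{L^2}\bar u-\bar u).
\end{equation*}
The first summand equals $(p_{\bar u_h}-\bar p_h,v_h)_{L^2(\Omega)}$, where $p_{\bar u_h}$ is the continuous adjoint associated with $\bar u_h$; an $L^1$-duality argument in the spirit of \eqref{eq:error_estimate_fundamental}, applied to the difference of continuous and discrete adjoints with linearization $\bar y_h$, gives $\|p_{\bar u_h}-\bar p_h\|_{L^1(\Omega)}\lesssim h^2|\log h|^2$, so the uniform bound $\|v_h\|_{L^\infty(\Omega)}\leq \texttt b-\texttt a$ controls this term by $h^2|\log h|^2$. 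For the second summand, piecewise-constancy of $\bar u_h$ together with $L^2$-orthogonality of $\Pi_{L^2}\bar u-\bar u$ to $\mathbb{U}_h$ erases the $\alpha\bar u_h$-contribution, leaving $(\bar p_h,\Pi_{L^2}\bar u-\bar u)_{L^2(\Omega)}$; splitting $\bar p_h = (\bar p_h-\bar p) + (\bar p - \Pi_{L^2}\bar p) + \Pi_{L^2}\bar p$, invoking orthogonality and Theorem~\ref{thm:error_estimates_adjoint_aux}, and using the $C^{0,1}$-regularity of $\bar u$ from Theorem~\ref{thm:extra_regul_control}, I would obtain a further bound $\lesssim h^2|\log h|^2$. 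Chaining these estimates into the coercivity inequality and absorbing the $o(1)\|v_h\|^2$ term yields $\|v_h\|_{L^2(\Omega)}^2 \lesssim h^2|\log h|^2$, i.e., \eqref{eq:control_error_estimate}.

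The main obstacle is the coercivity step: since $\bar p \notin L^\infty(\Omega)$, the standard strategy of placing $v_h$ into the extended critical cone $C_{\bar u}^\tau$ via pointwise or $L^\infty$-convergence of $\bar p_h$ is unavailable. One must instead exploit the saturation of $\bar u$ on $\Omega^\tau$ together with the $L^2$-estimate of Theorem~\ref{thm:error_estimates_adjoint_aux} and the elementwise discrete projection formula. This is precisely the ``subtle intertwining'' of $L^\infty$-/$W^{1,p}$-estimates, variational inequalities, and second-order optimality conditions flagged in the introduction as the principal technical contribution of the paper.
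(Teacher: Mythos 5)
Your overall architecture (second-order coercivity, mean value theorem for $j'$, the two variational inequalities, and the consistency estimates) matches the paper's, but two of your key steps contain genuine gaps, and in both places the paper uses a different device precisely to avoid the difficulty you run into.

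First, the coercivity step. You propose to show directly that $v_h=\bar u_h-\bar u$ is ``close to'' $C_{\bar u}^\tau$ by arguing that $v_h|_{\{|\bar{\mathfrak p}|>\tau\}}$ is small via the elementwise discrete projection formula and the $L^2$-convergence $\bar p_h\to\bar p$. This is left entirely qualitative: to absorb the deficit you need a quantitative bound of the form $o(1)\|v_h\|_{L^2(\Omega)}^2+Ch^2|\log h|^2$ for the contribution of $v_h$ on $\{|\bar{\mathfrak p}|>\tau\}$, and nothing in Theorems \ref{thm:error_estimates_adjoint_aux} or \ref{thm:equivalent_opt_cond} delivers that; the discrete control is driven to the bounds only where the \emph{elementwise average} of $\bar{\mathfrak p}_h$ has a definite sign, and controlling the exceptional set costs more than you can pay. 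The paper sidesteps this by arguing by contradiction (Lemma \ref{lemma:aux_thm_1}): assuming \eqref{eq:control_error_estimate} fails yields $h/\|\bar u-\bar u_h\|_{L^2(\Omega)}\to 0$, which is exactly what is needed to show that the weak limit of the \emph{normalized} differences lies in $C_{\bar u}$, after which \eqref{eq:charac_j2} and \eqref{eq:second_order_equivalent} give $j''(\hat u_h)v_h^2\geq \tfrac12\min\{\mu,\alpha\}\|v_h\|^{-2}_{L^2(\Omega)}\cdot\|v_h\|^2_{L^2(\Omega)}$ for small $h$. Your direct route is not salvageable as written.

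Second, your choice of discrete test function. Taking $u_h=\Pi_{L^2}\bar u$ in \eqref{eq:discrete_var_ineq} leaves you with the term $(\bar p_h,\Pi_{L^2}\bar u-\bar u)_{L^2(\Omega)}$, and your proposed splitting uses $\|\bar p-\Pi_{L^2}\bar p\|_{L^2(\Omega)}\lesssim h$, which requires $\bar p\in H^1(\Omega)$. But $\bar p\in W_0^{1,r}(\Omega)$ only for $r<d/(d-1)$, so this estimate is false; the best elementwise scaling gives $\|\bar p-\Pi_{L^2}\bar p\|_{L^2(\Omega)}\lesssim h^{2-d/2}\|\nabla\bar p\|_{L^r(\Omega)}$, and the resulting bound $h^{3-d/2}$ for this term destroys the rate when $d=3$ (and is already short of $h^2|\log h|^2$ when $d=2$). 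Likewise $\|\bar p_h-\bar p\|_{L^2(\Omega)}\|\Pi_{L^2}\bar u-\bar u\|_{L^2(\Omega)}$ contributes an unabsorbable $h^{3-d/2}$ via Theorem \ref{thm:error_estimates_adjoint_aux}. The paper's Lemma \ref{lemma:aux_var} is designed exactly to kill this term: it builds $u_h^*\in\mathbb{U}_{ad,h}$ as a $\bar{\mathfrak p}$-weighted elementwise average of $\bar u$, so that $j'(\bar u)(u_h^*-\bar u)=0$ \emph{exactly} while still $\|\bar u-u_h^*\|_{L^2(\Omega)}\lesssim h$; the problematic pairing then never appears, and only the consistency estimates \eqref{eq:aux_estimate_2}--\eqref{eq:aux_estimate_3} (which you do identify correctly) are needed. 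You should adopt both devices — the contradiction argument for coercivity and the weighted quasi-interpolant $u_h^*$ — to close the proof.
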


We follow \cite{MR2272157,MR2350349} and proceed by contradiction. Let us assume that $\{ \bar u_h \}_{h>0}$ converges to $\bar u$ as $h \downarrow 0$ and that \eqref{eq:control_error_estimate} does not hold. We thus have, for every $k \in \mathbb{N}$, $h_k>0$ such that
$
\|\bar{u}-\bar{u}_{h_k}\|_{L^2(\Omega)} > k h_k|\log {h_k}|,
$
and thus  $\{h_{k}\}_{k\in \mathbb{N}} \subset \mathbb{R}^{+}$ such that
\begin{equation}\label{eq:estimate_contradiction}
\lim_{ h_{k}\downarrow 0}\|\bar{u}-\bar{u}_{h_k}\|^{}_{L^2(\Omega)}\to 0, 
\qquad
\lim_{h_{k} \downarrow 0}\frac{\|\bar{u}-\bar{u}_{h_{k}}\|_{L^2(\Omega)}}{h_{k}|\log {h}_{k}|}=+\infty.
\end{equation}

To prove \eqref{eq:control_error_estimate} we need some preparatory results.

\begin{lemma}[auxiliary result]\label{lemma:aux_thm_1}
Assume that \textnormal{\ref{A1}}, \textnormal{\ref{A2}}, and \textnormal{\ref{A3}} hold. Let us assume that, in addition, \eqref{eq:control_error_estimate} is false. Let $\bar{u}\in\mathbb{U}_{ad}$ satisfies the second order optimality condition \eqref{eq:second_order_2_2}. Then, there exists $h_{\dagger} > 0$ such that 
\begin{equation}\label{eq:aux_estimate}
\textgoth{C}
\|\bar{u}-\bar{u}_h\|_{L^2(\Omega)}^2\leq [j'(\bar{u}_h)-j'(\bar{u})](\bar{u}_h-\bar{u}) \quad \forall h < h_{\dagger},
\quad
\textgoth{C} = 2^{-1} \min\{\mu,\alpha\},
\end{equation}
where $\alpha$ is the regularization parameter and $\mu$ denotes the constant appearing in  \eqref{eq:second_order_equivalent}.
\end{lemma}
\begin{proof}
Since \eqref{eq:control_error_estimate} is false, there exists a sequence $\{ h_k \}_{k \in \mathbb{N}}$ such that the limits in \eqref{eq:estimate_contradiction} hold. In an attempt to simplify the exposition of the material, in what follows, we will omit the subindex $k$, i.e., we denote $u_{h_k} = u_{h}$. Observe that $h \downarrow 0$ as $k \uparrow \infty$.

Define 
$
v_h:= (\bar{u}_h-\bar{u})/\|\bar{u}_h-\bar{u}\|_{L^2(\Omega)}.
$
We assume that (up to a subsequence if necessary) $v_h \rightharpoonup v$ in $L^2(\Omega)$ as $h\downarrow 0$. In what follows, we prove that $v\in C_{\bar{u}}$, where $C_{\bar{u}}$ is defined in \eqref{def:critical_cone}. Since $\bar{u}_h\in \mathbb{U}_{ad,h}\subset \mathbb{U}_{ad}$, it is clear that $v_h$ satisfies the sign conditions in \eqref{eq:sign_cond}. The fact that $v_h \rightharpoonup v$ in $L^2(\Omega)$, as $h\downarrow 0$, implies that $v$ satisfies \eqref{eq:sign_cond} as well. To show that $v(x) = 0$ if $\bar{\mathfrak{p}}(x)\neq 0$ for a.e.~$x\in\Omega$, we introduce 
$
\bar{\mathfrak{p}}_h:= \bar{p}_h+\alpha \bar{u}_h.
$
In view of $\|\bar{u}-\bar{u}_{h}\|_{L^2(\Omega)}\to 0$, as $h\downarrow 0$, Theorem \ref{thm:error_estimates_adjoint_aux} yields $\bar{\mathfrak{p}}_h \to\bar{\mathfrak{p}}$ in $L^2(\Omega)$ as $h\downarrow 0$. We can thus obtain
\begin{multline*}
\int_\Omega \bar{\mathfrak{p}}(x)v(x) \mathrm{d}x
=\lim_{h\to 0}\int_\Omega\bar{\mathfrak{p}}_h(x)v_h(x) \mathrm{d}x
=\lim_{h\to 0}\frac{1}{\|\bar{u}_h-\bar{u}\|_{L^2(\Omega)}}
\\
\cdot \left(\int_\Omega\bar{\mathfrak{p}}_h (\Pi_{L^2}\bar{u}-\bar{u})\mathrm{d}x 
+ \int_\Omega\bar{\mathfrak{p}}_h(\bar{u}_h - \Pi_{L^2}\bar{u}) \mathrm{d}x\right)
 =: \lim_{h\to 0}\frac{\mathbf{I}+\mathbf{II}}{\|\bar{u}_h-\bar{u}\|_{L^2(\Omega)}},
\end{multline*}
where $\Pi_{L^2}$ denotes the $L^2$-orthogonal projection operator into piecewise constant functions over $\T_h$. The discrete variational inequality \eqref{eq:discrete_var_ineq} immediately yields $\mathbf{II}\leq 0$. On the other hand, $|\mathbf{I}| \leq \|\bar{\mathfrak{p}}_{h}\|_{L^2(\Omega)} \|  \bar{u} - \Pi_{L^2}\bar{u} \|_{L^2(\Omega)} \lesssim h \| \nabla \bar u \|_{L^2(\Omega)}$, upon noticing that $\|\bar{\mathfrak{p}}_{h}\|_{L^2(\Omega)}$ is uniformly bounded by problem data: $\|\bar{\mathfrak{p}}_{h}\|_{L^2(\Omega)}\leq \|\bar{\mathfrak{p}}_{h}-\bar{\mathfrak{p}}\|_{L^2(\Omega)}+\|\bar{\mathfrak{p}}\|_{L^2(\Omega)} \lesssim 1$. On the basis of \eqref{eq:estimate_contradiction}, the previous inequalities yield
\begin{equation*}
\int_\Omega \bar{\mathfrak{p}}(x)v(x) \mathrm{d}x
\lesssim
\lim_{h\to 0}\frac{h}{\|\bar{u}_h-\bar{u}\|_{L^2(\Omega)}}
\lesssim 
\lim_{h\to 0}\frac{h|\log h|}{\|\bar{u}_h-\bar{u}\|_{L^2(\Omega)}}
=0.
\end{equation*}
Since $v$ satisfies the sign condition \eqref{eq:sign_cond}, then $\bar{\mathfrak{p}}(x)v(x) \geq 0$. Therefore the previous inequality yields $\int_{\Omega} |\bar{\mathfrak{p}}(x)v(x)| \mathrm{d}x = \int_{\Omega} \bar{\mathfrak{p}}(x)v(x) \mathrm{d}x\leq 0$. Consequently, if $\bar{\mathfrak{p}}(x)\neq 0$, then $v(x) = 0$ for a.e.~$x\in\Omega$. This allows us to conclude that $v \in C_{\bar{u}}$.

We now invoke the mean value theorem to deduce that
\begin{equation}\label{eq:difference_of_j}
[j'(\bar{u}_h)-j'(\bar{u})](\bar{u}_h-\bar{u})= j''(\hat{u}_h)(\bar{u}_h-\bar{u})^2, \quad \hat{u}_h = \bar{u} + \theta_h (\bar{u}_h - \bar{u}),
\end{equation}
where $\theta_h \in (0,1)$. Let $y_{\hat{u}_h}$ be unique solution to \eqref{eq:weak_st_eq} with $u=\hat{u}_{h}$ and $p_{\hat{u}_h}$ be the unique solution to \eqref{eq:adj_eq} with $y=y_{\hat{u}_h}$. Since $\bar u_h \rightarrow \bar u$ in $L^2(\Omega)$ as $h \downarrow 0$, we have $y_{\hat{u}_h} \rightarrow \bar y$ in $H_0^1(\Omega) \cap C(\bar \Omega)$ and $p_{\hat{u}_h} \rightarrow \bar p$ in $W_{0}^{1,r}(\Omega)$ as $h \downarrow 0$. Here $r<d/(d-1)$. Similarly, $v_h \rightharpoonup v$ in $L^2(\Omega)$ implies that $z_{v_h} \rightharpoonup z_v$ in $H^2(\Omega) \cap H_0^1(\Omega)$ as $h \downarrow 0$. Hence, invoke \eqref{eq:charac_j2}, the definition of $v_h$, and the second order condition \eqref{eq:second_order_equivalent} to obtain
\begin{multline}
\lim_{h \downarrow 0}j''(\hat{u}_h)v_h^2 
 = 
\lim_{h \downarrow 0}\left(\alpha-\left(\tfrac{\partial^2 a}{\partial y^2}(\cdot,y_{\hat{u}_h})z_{v_h}^2,p_{\hat{u}_h}\right)_{L^2(\Omega)}+\sum_{t\in \mathcal{D}}z_{v_h}^2(t)\right)
= \alpha
\\
-\left(\tfrac{\partial^2 a}{\partial y^2}(\cdot,\bar{y})z_{v}^2,\bar{p}\right)_{L^2(\Omega)}
+\sum_{t\in \mathcal{D}}z_{v}^2(t)
= 
\alpha + j''(\bar{u})v^2-\alpha\|v\|_{L^2(\Omega)}^2
\geq \alpha +(\mu-\alpha) \|v\|_{L^2(\Omega)}^2.
\end{multline}
Therefore, since $\|v\|_{L^2(\Omega)}\leq 1$, we arrive at
$
\lim_{h \downarrow 0}j''(\hat{u}_h)v_{h}^2 \geq \min\{\mu,\alpha\}>0,
$
which proves the existence of $h_{\dagger} > 0$ such that
\[
j''(\hat{u}_h)v_{h}^2 \geq 2^{-1}\min\{\mu,\alpha\} \quad \forall h< h_{\dagger}.
\]
This, in light of the definition of $v_h$ and the identity \eqref{eq:difference_of_j}, allows us to conclude.
\end{proof}

\begin{lemma}[auxiliary result]\label{lemma:aux_thm_2}
Assume that \textnormal{\ref{A1}}, \textnormal{\ref{A2}}, and \textnormal{\ref{A3}} hold, and that $a(\cdot, 0)\in L^\infty(\Omega)$. Let $u_1,u_2\in \mathbb{U}_{ad}$ and $v\in L^\infty(\Omega)$. Thus, we have
\begin{align}
\label{eq:aux_estimate_2}
|{j}^{\prime}(u_1)v- j^{\prime}_h(u_1)v|
& \lesssim
h^2|\log h|^2\|v\|_{L^\infty(\Omega)},
\\
\label{eq:aux_estimate_3}
|{j}^{\prime}_h(u_1)v-{j}^{\prime}_h(u_2)v|
& \lesssim
\|u_1-u_2\|_{L^2(\Omega)}\|v\|_{L^2(\Omega)}.
\end{align}
\end{lemma}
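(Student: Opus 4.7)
The plan is to express both derivatives via the adjoint formulation: for $u \in \mathbb{U}_{ad}$ and $v \in L^2(\Omega)$, one has $j'(u)v = (p + \alpha u, v)_{L^2(\Omega)}$ and $j_h'(u)v = (p_h + \alpha u, v)_{L^2(\Omega)}$, where $p$ solves \eqref{eq:adj_eq} with $y = \mathcal{S}u$ and $p_h$ solves \eqref{eq:discrete_adjoint_equation} with $y_h = \mathcal{S}_h u$. Consequently, \eqref{eq:aux_estimate_2} reduces to $\|p - p_h\|_{L^1(\Omega)} \lesssim h^2|\log h|^2$, and \eqref{eq:aux_estimate_3} -- after isolating the trivial term $\alpha(u_1-u_2,v)_{L^2}$ -- reduces to $\|p_h(u_1) - p_h(u_2)\|_{L^2(\Omega)} \lesssim \|u_1-u_2\|_{L^2(\Omega)}$ with an $h$-independent constant.

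For \eqref{eq:aux_estimate_2}, I would introduce the auxiliary $q_h \in \mathbb{V}_h$ solving \eqref{eq:discrete_adjoint_problem} with the \emph{continuous} state $y=\mathcal{S}u_1$, and decompose $p-p_h = (p-q_h) + (q_h-p_h)$. Theorem \ref{thm:error_estimate_fund} directly yields $\|p - q_h\|_{L^1(\Omega)} \lesssim h^2 |\log h|^2$. For $e_h := q_h - p_h \in \mathbb{V}_h$, subtracting the two discrete adjoint equations yields, for every $v_h\in\mathbb{V}_h$,
\begin{equation*}
(\nabla v_h, \nabla e_h)_{L^2(\Omega)} + \left(\tfrac{\partial a}{\partial y}(\cdot,y) e_h, v_h\right)_{L^2(\Omega)} = \sum_{t\in\mathcal{D}} (y(t)-y_h(t))v_h(t) - \left(\left[\tfrac{\partial a}{\partial y}(\cdot,y) - \tfrac{\partial a}{\partial y}(\cdot,y_h)\right]p_h, v_h\right)_{L^2(\Omega)}.
\end{equation*}
To extract an $L^1$-bound I would use $L^1$--$L^\infty$ duality: for $\phi \in L^\infty(\Omega)$ with $\|\phi\|_{L^\infty(\Omega)}\le 1$, let $\omega \in H_0^1(\Omega)$ solve $-\Delta \omega + \tfrac{\partial a}{\partial y}(\cdot,y)\omega = \phi$, so that $\omega \in W^{2,s}(\Omega) \cap L^\infty(\Omega)$ for every $s<\infty$, with $\|\omega\|_{L^\infty(\Omega)}\lesssim 1$. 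Testing the equation for $e_h$ with the Ritz projection $\omega_h$ of $\omega$ and invoking Galerkin orthogonality (since $e_h \in \mathbb{V}_h$), one obtains
\begin{equation*}
\int_\Omega \phi\, e_h\,\mathrm{d}x = \sum_{t\in\mathcal{D}} (y(t)-y_h(t))\omega_h(t) - \left(\left[\tfrac{\partial a}{\partial y}(\cdot,y) - \tfrac{\partial a}{\partial y}(\cdot,y_h)\right]p_h, \omega_h\right)_{L^2(\Omega)}.
\end{equation*}
The first sum is bounded by the \emph{local} maximum-norm estimate \eqref{eq:local_estimate_state}, which applies because $\mathcal{D}\Subset\Omega$ and the added hypothesis $a(\cdot,0)\in L^\infty(\Omega)$, together with $u_1\in\mathbb{U}_{ad}\subset L^\infty(\Omega)$, provides $L^\infty$-data on a neighborhood of $\mathcal{D}$; this gives $|y(t)-y_h(t)|\lesssim h^2|\log h|^2$ and hence, with $\|\omega_h\|_{L^\infty(\Omega)}\lesssim 1$, a contribution of order $h^2|\log h|^2$. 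The second term, by \ref{A3} and H\"older, is bounded by $\|y-y_h\|_{L^2(\Omega)}\|p_h\|_{L^2(\Omega)}\|\omega_h\|_{L^\infty(\Omega)}\lesssim h^2$, using \eqref{eq:global_estimate_state} for $\|y-y_h\|_{L^2}$ and the uniform $L^2$-stability of $p_h$ (cf.~Theorem \ref{thm:error_estimates_adjoint_aux}). Taking the supremum over $\phi$ delivers \eqref{eq:aux_estimate_2}.

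For \eqref{eq:aux_estimate_3}, set $e := p_h(u_1)-p_h(u_2)$, $y^i := \mathcal{S}_h u_i$, $p^i := p_h(u_i)$. Subtracting the two discrete adjoint equations gives, for all $v_h \in \mathbb{V}_h$,
\begin{equation*}
(\nabla v_h, \nabla e)_{L^2(\Omega)} + \left(\tfrac{\partial a}{\partial y}(\cdot,y^1) e, v_h\right)_{L^2(\Omega)} = \sum_{t\in\mathcal{D}}(y^1(t)-y^2(t))v_h(t) - \left(\left[\tfrac{\partial a}{\partial y}(\cdot,y^1)-\tfrac{\partial a}{\partial y}(\cdot,y^2)\right]p^2, v_h\right)_{L^2(\Omega)}.
\end{equation*}
I would carry out an $L^2$-duality argument parallel to the one above, with $g \in L^2(\Omega)$ and dual $\omega \in H^2(\Omega)\cap H_0^1(\Omega)\hookrightarrow C(\bar\Omega)$ solving $-\Delta\omega + \tfrac{\partial a}{\partial y}(\cdot,y^1)\omega = g$ so that $\|\omega\|_{L^\infty(\Omega)}\lesssim \|g\|_{L^2(\Omega)}$. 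Everything then reduces to the discrete Lipschitz bound $\|y^1-y^2\|_{L^\infty(\Omega)} \lesssim \|u_1-u_2\|_{L^2(\Omega)}$ with $h$-independent constant, which is obtained by recognising $y^1-y^2 \in \mathbb{V}_h$ as the Galerkin approximation of the continuous solution $\psi$ of a linear elliptic problem with right-hand side $u_1-u_2$ and a nonnegative reaction coefficient, applying \eqref{eq:global_estimate_state_II} to conclude $\|\psi - (y^1-y^2)\|_{L^\infty(\Omega)} \lesssim h^{2-d/2}\|u_1-u_2\|_{L^2(\Omega)}$, and invoking $\|\psi\|_{L^\infty(\Omega)}\lesssim \|u_1-u_2\|_{L^2(\Omega)}$ via $H^2\hookrightarrow C(\bar\Omega)$ for $d\le 3$.

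The main obstacle is in \eqref{eq:aux_estimate_2}: extracting the sharp rate $h^2|\log h|^2$ requires simultaneously handling two independent sources of error -- the Dirac-driven finite element discretization of the adjoint, logarithmically optimal in $L^1$, and the nonlinear coupling through $\tfrac{\partial a}{\partial y}(\cdot,y) - \tfrac{\partial a}{\partial y}(\cdot,y_h)$ -- in a coordinated way. The interior maximum-norm estimate \eqref{eq:local_estimate_state} is crucial for the Dirac part (exploiting $\mathcal{D}\Subset\Omega$), whereas the global $L^\infty$-rate $h^{2-d/2}$ is too weak and must be replaced by the $L^2$-rate \eqref{eq:global_estimate_state} for the coefficient perturbation.
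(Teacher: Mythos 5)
Your proposal is correct, and for \eqref{eq:aux_estimate_2} it takes a genuinely different route from the paper. Both arguments start from the adjoint representations of $j'$ and $j_h'$ and insert an intermediate variable, but you split at the \emph{discrete} adjoint with \emph{continuous} state ($q_h$ from \eqref{eq:discrete_adjoint_problem}), so that $p-q_h$ is covered verbatim by \eqref{eq:error_estimate_fundamental} and the remainder $q_h-p_h$ is a purely discrete perturbation problem, which you treat by $L^1$--$L^\infty$ duality against a Ritz projection. The paper instead splits at the \emph{continuous} adjoint with \emph{discrete} state data ($\hat p$), bounds $p-\hat p$ via the inf-sup stability of the continuous problem in $W^{1,r}_0(\Omega)$, and is then forced to measure the coefficient perturbation $[\tfrac{\partial a}{\partial y}(\cdot,\hat y_h)-\tfrac{\partial a}{\partial y}(\cdot,y_{u_1})]\hat p$ in $L^2(\Omega)$; since $\hat p$ is only globally $L^2$ while $y-y_h$ is only $O(h^{2-d/2})$ in global $L^\infty$, this requires an additional near/far decomposition of $\Omega$ into $\Lambda_1$ and $\Omega\setminus\Lambda_1$, using interior $L^\infty$ bounds for the adjoint away from the Dirac points. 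Your duality pairing sidesteps that splitting entirely, because the perturbation term is only integrated against the uniformly bounded dual function $\omega_h$, so the crude bound $\|y-y_h\|_{L^2(\Omega)}\|p_h\|_{L^2(\Omega)}\|\omega_h\|_{L^\infty(\Omega)}\lesssim h^2$ suffices; this is arguably the cleaner argument, at the price of re-running the duality machinery that the paper has already packaged into Theorem \ref{thm:error_estimate_fund}. Both routes rest on the same external ingredients: the local estimate \eqref{eq:local_estimate_state} for the Dirac contribution (valid since $\mathcal{D}\Subset\Omega$ and $a(\cdot,0)\in L^\infty(\Omega)$), and \eqref{eq:global_estimate_state}--\eqref{eq:global_estimate_state_II} for the states. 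For \eqref{eq:aux_estimate_3} your argument is essentially the paper's: both reduce the matter to the $h$-uniform discrete Lipschitz bound $\|\hat y_h-\tilde y_h\|_{L^\infty(\Omega)}\lesssim\|u_1-u_2\|_{L^2(\Omega)}$ obtained via the mean value theorem and \eqref{eq:global_estimate_state_II}; whether one then invokes \eqref{eq:error_estimate_fundamental_II} plus stability for an auxiliary continuous $\xi$ (as the paper does) or an $L^2$-duality argument (as you do) is a matter of packaging. One minor imprecision: on a convex polytope the dual solution $\omega$ need not lie in $W^{2,s}(\Omega)$ globally for all $s<\infty$ (convexity only guarantees $H^2(\Omega)$), but your argument only uses $\|\omega\|_{L^\infty(\Omega)}\lesssim 1$ and $\|\omega_h\|_{L^\infty(\Omega)}\lesssim 1$, which follow from $\|\omega\|_{H^2(\Omega)}\lesssim\|\phi\|_{L^2(\Omega)}$ and the global maximum-norm error estimate for the Ritz projection, so no harm is done.
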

\begin{proof}
We proceed on the basis of two steps.

\underline{Step 1.} The goal of this step is to obtain \eqref{eq:aux_estimate_2}. To accomplish this task, we begin with a basic computation which reveals that 
$
j^{\prime}(u_{1})v = (p_{u_{1}}+\alpha u_{1},v)_{L^2(\Omega)},
$
where
\begin{equation*}
 p_{u_{1}} \in W_{0}^{1,r}(\Omega):
 \quad
(\nabla w,\nabla p_{u_{1}})_{L^2(\Omega)} + \left(\tfrac{\partial a}{\partial y}(\cdot,y_{u_{1}})p_{u_{1}},w\right)_{L^2(\Omega)} = \sum_{t\in\mathcal{D}}\langle (y_{u_{1}}(t) - y_{t})\delta_{t},w\rangle
\end{equation*}
for all $w \in W_{0}^{1,r'}(\Omega)$. Here, $r\in [2d/(d+2),d/(d-1))$ and $y_{u_{1}}$ solves \eqref{eq:weak_st_eq} with $u=u_{1}$. A similar argument yields 
$
j_{h}^{\prime}(u_{1})v = (\hat{p}_{h}+\alpha u_{1},v)_{L^2(\Omega)},
$
where $\hat{p}_{h}$ is such that
\begin{equation}\label{eq:hat_p_disc}
\hat{p}_{h} \in \mathbb{V}_{h}:
\quad
(\nabla w_{h},\nabla \hat{p}_{h})_{L^2(\Omega)} + \left(\tfrac{\partial a}{\partial y}(\cdot,\hat{y}_{h})\hat{p}_{h},w_{h}\right)_{L^2(\Omega)} = \sum_{t\in\mathcal{D}}\langle (\hat{y}_{h}(t) - y_{t})\delta_{t},w_{h}\rangle
\end{equation}
for all $w_{h} \in \mathbb{V}_{h}$.  In \eqref{eq:hat_p_disc} the variable $\hat{y}_{h}\in\mathbb{V}_{h}$ corresponds to the solution to \eqref{eq:discrete_state_equation} with $u_h$ replaced by $u_1$.
Define $\hat{p}\in W_{0}^{1,r}(\Omega)$ as the unique solution to 
\begin{equation*}
(\nabla w,\nabla\hat{p})_{L^2(\Omega)} + \left(\tfrac{\partial a}{\partial y}(\cdot,\hat{y}_{h})\hat{p},w\right)_{L^2(\Omega)} = \sum_{t\in\mathcal{D}}\langle (\hat{y}_{h}(t) - y_{t})\delta_{t},w\rangle\quad \forall w \in W_{0}^{1,r'}(\Omega).
\end{equation*}
Here, $r\in [2d/(d+2),d/(d-1))$. Notice that $\hat{p}_{h}\in \mathbb{V}_{h}$ corresponds to the finite element approximation of $\hat{p}$ within $\mathbb{V}_h$. We also notice the following stability estimate for $\hat p$:
\begin{equation}\label{eq:stab_tilde_p}
\|\nabla \hat{p}\|_{L^r(\Omega)} \lesssim \sum_{t\in\mathcal{D}}|\hat{y}_{h}(t) - y_{t}|.
\end{equation} 
With all these continuous and discrete variables at hand, we can write
\begin{equation}\label{eq:tilde_hat_triang}
{j}^{\prime}(u_1)v- j^{\prime}_h(u_1)v = 
(p_{u_{1}}-\hat{p},v)_{L^2(\Omega)} + (\hat{p}-\hat{p}_{h},v)_{L^2(\Omega)}:=\mathbf{I} + \mathbf{II}.
\end{equation}

To estimate the term $\mathbf{I}$ we define $\zeta := p_{u_{1}}-\hat{p}\in W_0^{1,r}(\Omega)$ and observe that
\begin{multline*}
(\nabla w,\nabla\zeta)_{L^2(\Omega)} + \left(\tfrac{\partial a}{\partial y}(\cdot,{y}_{u_{1}})\zeta,w\right)_{L^2(\Omega)} 
=
\sum_{t\in\mathcal{D}}\langle (y_{u_{1}}(t)-\hat{y}_{h}(t) )\delta_{t},w\rangle 
+ \left(\mathfrak{g} ,w\right)_{L^2(\Omega)},
\end{multline*}
for all $w\in W_{0}^{1,r'}(\Omega)$. Here, $\mathfrak{g}:= [\tfrac{\partial a}{\partial y}(\cdot,\hat{y}_{h})-\tfrac{\partial a}{\partial y}(\cdot,{y}_{u_{1}})]\hat{p}$. An inf-sup condition, which follows from \cite[Theorem 1]{MR812624}, yields
\begin{equation}\label{eq:stab_zeta}
\|\nabla \zeta \|_{L^r(\Omega)} 
\lesssim
\sum_{t\in\mathcal{D}}|y_{u_{1}}(t)-\hat{y}_{h}(t) | + \left\|
\mathfrak{g}
\right\|_{L^2(\Omega)}.
\end{equation}

Let us concentrate on $\| \mathfrak{g}\|_{L^2(\Omega)}$. Let $\Lambda_1, \Omega_0$ be smooth domains such that $\Omega_1 \Subset \Lambda_1 \Subset \Omega_0 \Subset \Omega$ and $\mathcal{D} \subset \Omega_{1}$. 
The basic identity
$
\| \mathfrak{g}
\|_{L^2(\Omega)}^2 = 
\| \mathfrak{g}
\|_{L^2(\Lambda_{1})}^2
+
\| \mathfrak{g}
\|_{L^2(\Omega\setminus\Lambda_{1})}^2
$
combined with the error bounds derived in Theorem \ref{thm:error_estimates_state} yield
\begin{align*}
\| \mathfrak{g}
\|_{L^2(\Omega)}^2
&\lesssim 
\|\hat{p}\|_{L^2(\Lambda_{1})}^2\|\hat{y}_{h}-{y}_{u_{1}}\|_{L^\infty(\Lambda_{1})}^2
+
\|\hat{p}\|_{L^\infty(\Omega\setminus\Lambda_{1})}^2\|\hat{y}_{h}-{y}_{u_{1}}\|_{L^2(\Omega)}^2
\\
&\lesssim
h^4|\log h|^4\|\hat{p}\|_{L^2(\Omega)}^2 + h^4\|\hat{p}\|_{L^\infty(\Omega\setminus\Lambda_{1})}^2\|u_{1} - a(\cdot,0)\|_{L^2(\Omega)}^2.
\end{align*}
The Sobolev embedding $W_{0}^{1,r}(\Omega)\hookrightarrow L^2(\Omega)$ and the fact that $u_{1}\in\mathbb{U}_{ad}$ reveal that
\begin{equation}\label{eq:combined estimate_i}
\| \mathfrak{g}
\|_{L^2(\Omega)}^2
\lesssim 
h^4|\log h|^4\|\nabla \hat{p}\|_{L^r(\Omega)}^2 + h^4\|\hat{p}\|_{L^\infty(\Omega\setminus\Lambda_{1})}^2,
\end{equation}
where the hidden constant is independent of the involved continuous and discrete variables but depends on the continuous optimal control problem data. We now invoke Theorem \ref{thm:reg_adj_state_local} to conclude that $\|\hat{p}\|_{L^\infty(\Omega\setminus\Lambda_{1})}$ is uniformly bounded. On the other hand, the stability estimate \eqref{eq:stab_tilde_p} and analogous arguments to the ones that lead to \eqref{eq:aux_estimate_iii} allows us to conclude that $\|\nabla \hat{p}\|_{L^r(\Omega)} \leq C$, where $C$ depends on $\{ y_t \}_{t \in \mathcal{D}}$, $a$, $\texttt{a}$, and $\texttt{b}$. We thus invoke \eqref{eq:combined estimate_i} to arrive at
$
\| \mathfrak{g}
\|_{L^2(\Omega)}
\lesssim h^2|\log h|^{2}.
$
This bound, estimate \eqref{eq:stab_zeta}, and the local estimate \eqref{eq:local_estimate_state}, yield
\begin{equation}\label{eq:estimate_I_zeta}
\mathbf{I} 
\leq
\|\zeta\|_{L^2(\Omega)}\|v\|_{L^2(\Omega)} 
\lesssim 
\|\nabla \zeta\|_{L^r(\Omega)}\|v\|_{L^2(\Omega)}
\lesssim
h^2|\log h|^{2}\|v\|_{L^2(\Omega)}.
\end{equation}

The control of $\mathbf{II}$ in \eqref{eq:tilde_hat_triang} follows immediately from the error estimate \eqref{eq:error_estimate_fundamental}:
\begin{equation}\label{eq:estimate_II_tilde_discrete}
\mathbf{II} \leq \| \hat{p} - \hat{p}_{h}\|_{L^1(\Omega)}\|v\|_{L^\infty(\Omega)}
\lesssim
h^2|\log h|^2\|v\|_{L^\infty(\Omega)}.
\end{equation}

Upon combining \eqref{eq:tilde_hat_triang}, \eqref{eq:estimate_I_zeta}, and \eqref{eq:estimate_II_tilde_discrete}, we obtain the estimate \eqref{eq:aux_estimate_2}.

\underline{Step 2.}  We now obtain \eqref{eq:aux_estimate_3}. From the previous step, we have that
$
j_{h}^{\prime}(u_{1})v = (\hat{p}_{h}+\alpha u_{1},v)_{L^2(\Omega)},
$
where $\hat{p}_{h}\in \mathbb{V}_{h}$ solves \eqref{eq:hat_p_disc}. On the other hand, similar arguments yield
$
j_{h}^{\prime}(u_{2})v = (\tilde{p}_{h}+\alpha u_{2},v)_{L^2(\Omega)},
$
where $\tilde{p}_{h}\in \mathbb{V}_{h}$ is the unique solution to the problem
\begin{equation*}
(\nabla w_{h},\nabla \tilde{p}_{h})_{L^2(\Omega)} + \left(\tfrac{\partial a}{\partial y}(\cdot,\tilde{y}_{h})\tilde{p}_{h},w_{h}\right)_{L^2(\Omega)} = \sum_{t\in\mathcal{D}}\langle (\tilde{y}_{h}(t) - y_{t})\delta_{t},w_{h}\rangle\quad \forall w_{h} \in \mathbb{V}_{h}.
\end{equation*}
Here, $\tilde{y}_{h}\in\mathbb{V}_{h}$ corresponds to the solution to \eqref{eq:discrete_state_equation} with $u_h$ replaced by $u_2$. Therefore, 
\begin{equation}\label{eq:estimate_II_iii}
|j_{h}^{\prime}(u_{1})v - j_{h}^{\prime}(u_{2})v|
\leq 
\left(\|\hat{p}_{h}-\tilde{p}_{h}\|_{L^2(\Omega)}+\alpha\| u_{1}-u_{2}\|_{L^2(\Omega)}\right)\|v\|_{L^2(\Omega)}.
\end{equation}

It thus suffices to bound $\|\hat{p}_{h}-\tilde{p}_{h}\|_{L^2(\Omega)}$. To accomplish this task, we define
\begin{multline*}
\xi \in W_{0}^{1,r}(\Omega):
\quad
(\nabla w,\nabla\xi)_{L^2(\Omega)} + \left(\tfrac{\partial a}{\partial y}(\cdot,\hat{y}_{h})\xi,w\right)_{L^2(\Omega)}  \\
=
\sum_{t\in\mathcal{D}}\langle (\hat{y}_{h}(t)-\tilde{y}_{h}(t) )\delta_{t},w\rangle + \left(\left[\tfrac{\partial a}{\partial y}(\cdot,\tilde{y}_{h})-\tfrac{\partial a}{\partial y}(\cdot,\hat{y}_{h})\right]\tilde{p}_{h},w\right)_{L^2(\Omega)} \quad \forall w\in W_{0}^{1,r'}(\Omega).
\end{multline*} 
We also define $\xi_{h}:=\hat{p}_{h}-\tilde{p}_{h}\in\mathbb{V}_{h}$ and immediately observe that $\xi_{h}$ corresponds to the finite element approximation of $\xi$ within $\mathbb{V}_h$.
We thus invoke basic estimates, \eqref{eq:error_estimate_fundamental_II}, and a stability estimate for the problem that $\xi$ solves to arrive at
\begin{align*}
\|\xi_{h}\|_{L^2(\Omega)} 
&
\leq 
\|\xi-\xi_{h}\|_{L^2(\Omega)}+\|\xi\|_{L^2(\Omega)}
\lesssim \|\xi-\xi_{h}\|_{L^2(\Omega)}+\|\nabla\xi\|_{L^r(\Omega)}
\\
&\lesssim (h^{2-\frac{d}{2}}+1)\left(\|\hat{y}_{h}-\tilde{y}_{h}\|_{L^\infty(\Omega)} + \left\|\left[\tfrac{\partial a}{\partial y}(\cdot,\tilde{y}_{h})-\tfrac{\partial a}{\partial y}(\cdot,\hat{y}_{h})\right]\tilde{p}_{h}\right\|_{L^2(\Omega)}\right).
\end{align*}
This estimate, in light of assumption \ref{A3}, immediately yields
\begin{equation}\label{eq:estimate_II_iv}
\|\hat{p}_{h}-\tilde{p}_{h}\|_{L^2(\Omega)} = \|\xi_{h}\|_{L^2(\Omega)}
\lesssim
(1+\|\tilde{p}_{h}\|_{L^2(\Omega)})\|\hat{y}_{h}-\tilde{y}_{h}\|_{L^\infty(\Omega)}.
\end{equation}

We now bound $\|\hat{y}_{h}-\tilde{y}_{h}\|_{L^\infty(\Omega)}$. Let us first recall that $y_{u_{i}}$ solves \eqref{eq:weak_st_eq} with $u=u_{i}$, where $i\in\{1,2\}$, and that $\hat{y}_{h}$ and $\tilde{y}_{h}$ correspond to the finite element approximations of $y_{u_{1}}$ and $y_{u_{2}}$, respectively. Since $a(\cdot, \hat{y}_{h})-a(\cdot,\tilde{y}_{h})=\tfrac{\partial a}{\partial y}(\cdot, \mathsf{y}_{h})(\hat{y}_{h}-\tilde{y}_{h})$, where $\mathsf{y}_{h} = \tilde{y}_{h}+\theta_{h}(\hat{y}_{h}-\tilde{y}_{h})$ and $\theta_{h}\in (0,1)$, 
we deduce that $\hat{y}_{h}-\tilde{y}_{h}\in\mathbb{V}_{h}$ solves the problem
\[
(\nabla (\hat{y}_{h}-\tilde{y}_{h}),\nabla v_{h})_{L^2(\Omega)}+\left(\tfrac{\partial a}{\partial y}(\cdot, \mathsf{y}_{h})(\hat{y}_{h}-\tilde{y}_{h}), v_{h}\right)_{L^2(\Omega)}= (u_{1}-u_{2},v_{h})_{L^2(\Omega)} \quad \forall v_{h}\in\mathbb{V}_{h}.
\]
Define $\eta\in H_0^1(\Omega)$ as the solution to
\[
(\nabla \eta,\nabla v)_{L^2(\Omega)}+\left(\tfrac{\partial a}{\partial y}(\cdot, \mathsf{y}_{h})\eta, v\right)_{L^2(\Omega)}= (u_{1}-u_{2},v)_{L^2(\Omega)} \quad \forall v\in H_0^1(\Omega).
\]
Noticing that $\hat{y}_{h}-\tilde{y}_{h}\in\mathbb{V}_{h}$ corresponds to the finite element approximation of $\eta$ within $\mathbb{V}_h$, we conclude, in view of estimate \eqref{eq:global_estimate_state_II}, that
\begin{equation*}
\|\hat{y}_{h}-\tilde{y}_{h}\|_{L^\infty(\Omega)}
\leq
\|(\hat{y}_{h}-\tilde{y}_{h})-\eta\|_{L^\infty(\Omega)}+\|\eta\|_{L^\infty(\Omega)}
\lesssim
(h^{2-\frac{d}{2}}+1)\|u_{1}-u_{2}\|_{L^2(\Omega)}.
\end{equation*}
Replace this bound into \eqref{eq:estimate_II_iv} and the obtained one into \eqref{eq:estimate_II_iii} to conclude 
\begin{equation}\label{eq:estimate_II_v}
|j_{h}^{\prime}(u_{1})v - j_{h}^{\prime}(u_{2})v|
\lesssim
(1+\|\tilde{p}_{h}\|_{L^2(\Omega)}+\alpha)\|u_{1}-u_{2}\|_{L^2(\Omega)}\|v\|_{L^2(\Omega)}.
\end{equation}
We observe that similar arguments to the ones used to derive \eqref{eq:aux_estimate_iii} yield
$
\|\tilde{p}_{h}\|_{L^2(\Omega)}\lesssim \|u_{2}-a(\cdot,0)\|_{L^2(\Omega)}+\sum_{t\in\mathcal{D}}|y_{t}|\lesssim C,
$
where $C>0$. This concludes the proof.
\end{proof}  

Inspired by \cite[Lemma 7.5]{MR2272157}, \cite[Lemma 4.17]{MR2338434}, and \cite[Lemma 43]{MR3586845}, we prove the existence of a suitable auxiliary variable and derive an error estimate.

\begin{lemma}[error estimate for an auxiliary variable]\label{lemma:aux_var}
There exists $h_{\star}>0$ such that for $h < h_{\star}$ there exists $u_h^*\in \mathbb{U}_{ad,h}$ satisfying $j'(\bar{u})( \bar u - u_h^{*}) = 0$ and 
\[
\|\bar{u}-u_h^*\|_{L^2(\Omega)}\leq Ch, \qquad C > 0.
\]
\end{lemma}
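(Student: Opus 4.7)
The plan is to construct $u_h^*$ element-by-element, steered by the active set structure of $\bar u$. Split $\mathscr{T}_h = \mathscr{T}_h^1 \cup \mathscr{T}_h^2$, where $\mathscr{T}_h^1 := \{T \in \mathscr{T}_h : \int_T \bar{\mathfrak{p}} \, \mathrm{d}x \neq 0\}$ and $\mathscr{T}_h^2 := \mathscr{T}_h \setminus \mathscr{T}_h^1$, and set
\begin{equation*}
u_h^*|_T :=
\begin{cases}
\left(\int_T \bar{\mathfrak{p}}(x) \bar u(x)\, \mathrm{d}x\right)\big/\left(\int_T \bar{\mathfrak{p}}(x)\, \mathrm{d}x\right) & \text{if } T \in \mathscr{T}_h^1, \\[0.3em]
\dfrac{1}{|T|}\displaystyle\int_T \bar u(x)\, \mathrm{d}x & \text{if } T \in \mathscr{T}_h^2.
\end{cases}
\end{equation*}
On $\mathscr{T}_h^1$ the choice kills $\int_T \bar{\mathfrak{p}}(\bar u - u_h^*)$ by construction; on $\mathscr{T}_h^2$ we simply take the elementwise average of $\bar u$.

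The central technical point, and the main obstacle, is the structural claim that for $h$ sufficiently small $\bar{\mathfrak{p}}$ has constant sign on each $T \in \mathscr{T}_h$. To see this, introduce the active sets $A := \{\bar u = \texttt{a}\}$ and $B := \{\bar u = \texttt{b}\}$, which are disjoint and closed in $\bar\Omega$ by Theorem~\ref{thm:extra_regul_control}. A standard compactness argument, using that $\texttt{a} < \texttt{b}$ and that $\bar u$ is continuous, yields $\mathrm{dist}(A,B) > 0$. Thus, once $h < h_\star := \tfrac{1}{2}\mathrm{dist}(A,B)$, every element $T$ meets at most one of $A$ and $B$, and the sign conditions \eqref{eq:derivative_j} then force $\bar{\mathfrak{p}} \geq 0$ on $T$ whenever $T$ meets only $A$, $\bar{\mathfrak{p}} \leq 0$ whenever $T$ meets only $B$, and $\bar{\mathfrak{p}} \equiv 0$ whenever $T$ avoids both (the latter because $\bar{\mathfrak{p}} = 0$ a.e.~on the inactive set $\{\texttt{a} < \bar u < \texttt{b}\}$).

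Given this sign property, the three required conclusions follow directly. For admissibility, on $\mathscr{T}_h^1$ the value $u_h^*|_T$ is a convex combination of the $[\texttt{a},\texttt{b}]$-valued function $\bar u$ against the signed weight $\bar{\mathfrak{p}}$, whose nonnegativity (or nonpositivity) on $T$ ensures $u_h^*|_T \in [\texttt{a},\texttt{b}]$; on $\mathscr{T}_h^2$ the average of $\bar u$ obviously lies in $[\texttt{a},\texttt{b}]$. For orthogonality, on $T \in \mathscr{T}_h^1$ one has $\int_T \bar{\mathfrak{p}}(\bar u - u_h^*)\, \mathrm{d}x = 0$ by the very definition of $u_h^*|_T$, while on $T \in \mathscr{T}_h^2$ the constant-sign condition combined with $\int_T \bar{\mathfrak{p}} = 0$ forces $\bar{\mathfrak{p}} \equiv 0$ a.e.~on $T$, so again the contribution vanishes. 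Summing over elements gives $j'(\bar u)(\bar u - u_h^*) = (\bar{\mathfrak{p}}, \bar u - u_h^*)_{L^2(\Omega)} = 0$.

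Finally, for the $O(h)$ bound, observe that on every element $u_h^*|_T$ is a weighted average of values of $\bar u$ attained on $T$, whence $|u_h^*|_T - \bar u(x)| \leq \mathrm{osc}_T(\bar u) \leq h\,\|\nabla \bar u\|_{L^\infty(T)}$ for all $x \in T$, using the Lipschitz regularity $\bar u \in C^{0,1}(\bar\Omega)$ from Theorem~\ref{thm:extra_regul_control}. Squaring, integrating over $T$, and summing over $\mathscr{T}_h$ yields $\|\bar u - u_h^*\|_{L^2(\Omega)} \lesssim h\,\|\nabla \bar u\|_{L^\infty(\Omega)}$, which is the claimed estimate. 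The hardest part, as anticipated, is the constant-sign lemma, which is the single ingredient that simultaneously delivers admissibility and the elementwise orthogonality on $\mathscr{T}_h^2$.
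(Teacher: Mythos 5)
Your proposal is correct and follows essentially the same route as the paper: the identical $\bar{\mathfrak{p}}$-weighted elementwise construction of $u_h^*$, the same key observation that for small $h$ the sign conditions \eqref{eq:derivative_j} force $\bar{\mathfrak{p}}$ to have constant sign on each element (you get this from a positive distance between the active sets, the paper from the Lipschitz oscillation bound $|\bar u(x_1)-\bar u(x_2)|\leq(\texttt{b}-\texttt{a})/2$ on $T$ --- the same idea), and the same admissibility/orthogonality conclusions. Your direct oscillation bound $\|\bar u-u_h^*\|_{L^2(\Omega)}\lesssim h\|\nabla\bar u\|_{L^\infty(\Omega)}$ is a clean (arguably cleaner) version of the paper's final estimate via $\Pi_{L^2}\bar u$.
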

\begin{proof}
Define, for each $T\in \T_h$,
$
I_{T}:=\int_{T} \bar{\mathfrak{p}}(x) \mathrm{d}x
$
and $u_h^{*} \in \mathbb{U}_{h}$ by
\begin{equation}\label{eq:definition_u*}
u_h^{*}|^{}_{T}:=
\displaystyle\frac{1}{I_{T}}\int_{T} \bar{\mathfrak{p}}(x)\bar{u}(x) \mathrm{d}x \text{ if } I_{T}\neq 0,
\qquad 
u_h^{*}|^{}_{T}:=
\displaystyle\frac{1}{|T|}\int_{T} \bar{u}(x) \mathrm{d}x  \text{ if } I_{T}= 0.
\end{equation}
We recall that $\bar{\mathfrak{p}}= \bar{p}+\alpha\bar{u}$. In view of the fact that $\bar{u} \in C^{0,1}(\bar \Omega)$, which follows from Theorem \ref{thm:extra_regul_control}, there exists $h_{\star} >0$ such that
$
|\bar{u}(x_{1})-\bar{u}(x_{2})|\leq (\texttt{b}-\texttt{a})/2 
$
for every $h < h_{\star}$ and $x_{1},x_{2} \in T$. This implies, in particular, that, for each $T \in \T_h$, $\bar{u}$ do not take both values $\texttt{a}$ and $\texttt{b}$ in $T$. Therefore, with \eqref{eq:derivative_j} at hand, we deduce that either, for a.e~$x\in T$, $\bar{\mathfrak{p}}(x) \geq 0$ or $\bar{\mathfrak{p}}(x)\leq 0$. Consequently, we have that $I_T=0$ if and only if $\bar{\mathfrak{p}}(x)=0$ for a.e.~$x\in T$, and that, if $I_T\neq 0$, $\bar{\mathfrak{p}}(x)/I_{T}\geq 0$ for a.e.~$x\in T$. From this fact, and in view of the generalized mean value theorem, we conclude the existence of $x_T\in T$ such that $u_h^{*}|^{}_T=\bar{u}(x_T)$. Since $u_h^{*}\in\mathbb{U}_h$, we have thus obtained that $u_h^{*}\in\mathbb{U}_{ad,h}$. Now, let $T\in\T_{h}$. We estimate $\| \bar u - u_h^{*} \|_{L^2(T)}$ as follows:
\[
\| \bar u - u_h^{*} \|_{L^2(T)} \leq \| \bar u - \Pi_{L^2} \bar u \|_{L^2(T)} + \| \Pi_{L^2} \bar u  - u_h^{*} \|_{L^2(T)} \lesssim h \| \nabla \bar u \|_{L^2(T)} + h^{\frac{d}{2}} \| \bar u \|_{L^{\infty}(T)}.
\]
We finally observe that \eqref{eq:definition_u*} immediately yields
\begin{equation*}
j'(\bar{u})u_{h}^{*} = (\bar{\mathfrak{p}},u_{h}^{*})_{L^2(\Omega)}=\sum_{T\in \T_{h}}(\bar{\mathfrak{p}},u_{h}^{*})_{L^2(T)}=\sum_{T\in\T_{h}}(\bar{\mathfrak{p}},\bar{u})_{L^2(T)}=j'(\bar{u})\bar{u}.
\end{equation*}
This concludes the proof.
\end{proof}

\emph{Proof of Theorem \ref{thm:control_estimate}.} Adding and subtracting the term ${j}^{\prime}_h(\bar{u}_h)(\bar{u}-\bar{u}_h)$ in the right hand side of inequality \eqref{eq:aux_estimate} yield
\begin{equation}
\|\bar{u}-\bar{u}_h\|_{L^2(\Omega)}^2 
\lesssim 
[j'(\bar{u})-{j}^{\prime}_h(\bar{u}_h)](\bar{u}-\bar{u}_h) +[{j}^{\prime}_h(\bar{u}_h)- j'(\bar{u}_h)](\bar{u}-\bar{u}_h)\quad \forall h<h_{\dagger}.
\label{eq:aux_aux}
\end{equation}
Invoke inequality \eqref{eq:aux_estimate_2} in conjunction with that fact that $\bar{u},\bar{u}_{h}\in\mathbb{U}_{ad}$ to arrive at
\begin{equation}\label{eq:error_ct_1}
\|\bar{u}-\bar{u}_h\|_{L^2(\Omega)}^2 
\leq
C \left( [j'(\bar{u})-{j}^{\prime}_h(\bar{u}_h)](\bar{u}-\bar{u}_h) + h^2|\log h|^2 \right),
\end{equation}
where $C>0$. We now estimate $[j'(\bar{u})-{j}^{\prime}_h(\bar{u}_h)](\bar{u}-\bar{u}_h)$. To accomplish this task, we set $u=\bar{u}_h$ in \eqref{eq:variational_inequality} and $u_h=u_h^*$ in \eqref{eq:discrete_var_ineq} to obtain
\begin{equation*}
0 \leq j'(\bar{u})(\bar{u}_h-\bar{u}), \qquad
0 \leq {j}^{\prime}_h(\bar{u}_h)(u_h^*-\bar{u}_h) ={j}^{\prime}_h(\bar{u}_h)(u_h^*-\bar{u})+{j}^{\prime}_h(\bar{u}_h)(\bar{u}-\bar{u}_h).
\end{equation*}
Adding these inequalities we arrive at
$
[j'(\bar{u}) - {j}^{\prime}_h(\bar{u}_h)](\bar{u}-\bar{u}_h)
\leq 
{j}^{\prime}_h(\bar{u}_h)(u_h^*-\bar{u}).
$
We utilize that $u_h^*$ is such that $j'(\bar{u})(u_h^*-\bar{u})=0$, which follows from Lemma \ref{lemma:aux_var}, to obtain
\begin{multline}\label{eq:error_ct_1-2}
[j'(\bar{u}) - {j}^{\prime}_h(\bar{u}_h)](\bar{u}-\bar{u}_h)
\leq 
[{j}^{\prime}_h(\bar{u}_h)-j'(\bar{u})](u_h^*-\bar{u})\\ 
 =
[{j}^{\prime}_h(\bar{u}_h)-{j}^{\prime}_h(\bar{u})](u_h^*-\bar{u})+[{j}^{\prime}_h(\bar{u})-j'(\bar{u})](u_h^*-\bar{u}).
\end{multline}
We thus apply estimates \eqref{eq:aux_estimate_2} and \eqref{eq:aux_estimate_3} to obtain
\begin{equation*}
[j'(\bar{u}) - {j}^{\prime}_h(\bar{u}_h)](\bar{u}-\bar{u}_h)
\lesssim 
\|\bar{u}_h-\bar{u}\|_{L^2(\Omega)}\|u_h^*-\bar{u}\|_{L^2(\Omega)}+h^2|\log h|^2.
\end{equation*}
Invoke Young's inequality and the estimate of Lemma \ref{lemma:aux_var} to arrive at
\begin{equation}\label{eq:error_ct_2}
[j'(\bar{u}) - {j}^{\prime}_h(\bar{u}_h)](\bar{u}-\bar{u}_h)
\leq 
\tfrac{1}{2C}\|\bar{u}_h-\bar{u}\|_{L^2(\Omega)}^2+\tilde{C}h^2( 1 + |\log h|^2)
\end{equation}
for every $h < h_{\star}$. Here, $\tilde{C} >0$. Finally, replacing estimate \eqref{eq:error_ct_2} into \eqref{eq:error_ct_1} we conclude \eqref{eq:control_error_estimate}. This, which contradicts \eqref{eq:estimate_contradiction}, concludes the proof. \hfill $\proofbox$

We now improve, in two dimensions, the error estimate of Theorem \ref{thm:control_estimate}.

\begin{theorem}[improved error estimate]\label{thm:control_estimate_improved}
Let $d=2$. In the framework of Theorem \ref{thm:control_estimate}, we have the following optimal error estimate:
\begin{equation}\label{eq:control_error_estimate_improved}
\|\bar{u}-\bar{u}_h\|_{L^2(\Omega)}\lesssim h \qquad \forall h < \tilde{h}_{\ddagger},
\end{equation}
where the hidden constant is independent of $h$.
\end{theorem}
\begin{proof}
We proceed, again, by contradiction and assume that $\{ \bar u_h \}_{h>0}$ converges to $\bar u$ as $h \downarrow 0$ and \eqref{eq:control_error_estimate_improved} does not hold. Hence, we can find, for every $k \in \mathbb{N}$, $h_k>0$ such that
$\|\bar{u}-\bar{u}_{h_k}\|_{L^2(\Omega)} > k h_k$, and thus a sequence $\{h_{k}\}_{k\in \mathbb{N}} \subset \mathbb{R}^{+}$ such that
\begin{equation}\label{eq:estimate_contradiction_improved}
\lim_{ h_{k}\downarrow 0}\|\bar{u}-\bar{u}_{h_k}\|^{}_{L^2(\Omega)}\to 0, 
\qquad
\lim_{h_{k} \downarrow 0} h_{k}^{-1}\|\bar{u}-\bar{u}_{h_{k}}\|_{L^2(\Omega)}=+\infty.
\end{equation}
Withing this setting, the proof of estimate \eqref{eq:aux_estimate} follows verbatim the arguments used in the proof of Lemma \ref{lemma:aux_thm_1} upon replacing \eqref{eq:estimate_contradiction} by \eqref{eq:estimate_contradiction_improved}. The next ingredient is
\begin{equation}\label{eq:aux_estimate_2_improved}
|{j}^{\prime}(u_1)v- j^{\prime}_h(u_1)v|
\lesssim
h\|v\|_{L^2(\Omega)},
\qquad
u_{1}\in\mathbb{U}_{ad}, v\in L^2(\Omega).
\end{equation}
To obtain \eqref{eq:aux_estimate_2_improved}, we follow the arguments in Lemma \ref{lemma:aux_thm_2}: it suffices to bound \EO{the terms} $\mathbf{I}$ and $\mathbf{II}$; $\mathbf{I}$ being already controlled. \EO{Observe that, since we are in two dimensions, the error estimate \eqref{eq:error_estimate_fundamental_II} yields error bound} $\mathbf{II} \leq \| \hat{p} - \hat{p}_{h}\|_{L^2(\Omega)}\|v\|_{L^2(\Omega)} \lesssim h\|v\|_{L^2(\Omega)}$. 

To obtain \eqref{eq:control_error_estimate_improved} we thus proceed as in the proof of Theorem \ref{thm:control_estimate}. Invoke \eqref{eq:aux_aux} and \eqref{eq:aux_estimate_2_improved} to obtain
$\|\bar{u}-\bar{u}_h\|_{L^2(\Omega)}^2 \lesssim (j'(\bar{u})-{j}^{\prime}_h(\bar{u}_h))(\bar{u}-\bar{u}_h) + h\|\bar{u}-\bar{u}_h\|_{L^2(\Omega)}$. We now utilize \eqref{eq:error_ct_1-2}, \eqref{eq:aux_estimate_2_improved}, and \eqref{eq:aux_estimate_3} to derive the error estimate
\begin{equation*}
\|\bar{u}-\bar{u}_h\|_{L^2(\Omega)}^2  \lesssim  (\|\bar{u}_h-\bar{u}\|_{L^2(\Omega)}+h)\|u_h^*-\bar{u}\|_{L^2(\Omega)}+ h\|\bar{u}-\bar{u}_h\|_{L^2(\Omega)}.
\end{equation*}
In view of Lemma \ref{lemma:aux_var}, and Young's inequality, we arrive at $ \|\bar{u}-\bar{u}_h\|_{L^2(\Omega)}  \lesssim  h$ for every $h$ sufficiently small. This bound contradicts \eqref{eq:estimate_contradiction_improved} and concludes the proof.
\end{proof}

\begin{remark}[$d=3$] \EO{We notice that, the use of estimate \eqref{eq:error_estimate_fundamental_II} in three dimensions leads to the deteriorated bound $\mathbf{II} \leq \| \hat{p} - \hat{p}_{h}\|_{L^2(\Omega)}\|v\|_{L^2(\Omega)} \lesssim h^{\frac{1}{2}}\|v\|_{L^2(\Omega)}$.}
\end{remark}
%%%%%%%%%%%%%%%%%%%%%%%%%%%%%%%%%%%%%%%%%%%%%%%%%%%%%%%%%%%%%%%%%%
%%%%%%%%%%%%%%%%%%%%%%%%%%%%%%%%%%%%%%%%%%%%%%%%%%%%%%%%%%%%%%%%%%
%%%%%%%%%%%%%%%%%%%%%%%%%%%%%%%%%%%%%%%%%%%%%%%%%%%%%%%%%%%%%%%%%%
%%%%%%%%%%%%%%%%%%%%%%%%%%%%%%%%%%%%%%%%%%%%%%%%%%%%%%%%%%%%%%%%%%

\subsection{Error estimates for the variational discretization approach}
Let $\{ \bar{\mathfrak{u}}_h \}_{h>0} \subset \mathbb{U}_{ad}$ be a sequence of local minima of the semidiscrete optimal control problems such that $\bar{\mathfrak{u}}_h \to \bar{u}$ in $L^2(\Omega)$, as $h \downarrow 0$, where $\bar{u}\in\mathbb{U}_{ad}$ is a local solution of \eqref{def:weak_ocp}--\eqref{eq:weak_st_eq}; see Theorems \ref{thm:convergence_discrete_sol} and \ref{thm:convergence_discrete_sol_local}. In what follows, we derive an error estimate for $\| \bar u - \bar{\mathfrak{u}}_h \|_{L^2(\Omega)}$.

The following result is instrumental.

\begin{lemma}[auxiliary result]\label{lemma:aux_thm_1_var}
Assume that \textnormal{\ref{A1}}, \textnormal{\ref{A2}}, and \textnormal{\ref{A3}} hold. Let $\bar{u}\in\mathbb{U}_{ad}$ be such that \eqref{eq:second_order_2_2} holds. Then, there exists $h_{\dagger} > 0$ such that 
\begin{equation}\label{eq:aux_estimate_var}
\textgoth{C}
\|\bar{u}-\bar{\mathfrak{u}}_h\|_{L^2(\Omega)}^2\leq [j'(\bar{\mathfrak{u}}_h)-j'(\bar{u})](\bar{\mathfrak{u}}_h-\bar{u}) \quad \forall h < h_{\dagger},
\quad
\textgoth{C} = 2^{-1} \min\{\mu,\alpha\},
\end{equation}
where $\alpha$ is the regularization parameter and $\mu$ denotes the constant appearing in \eqref{eq:second_order_equivalent}.
\end{lemma}
\begin{proof}
We adapt the proof of Lemma \ref{lemma:aux_var} and define  $v_h:= (\bar{\mathfrak{u}}_h-\bar{u})/\|\bar{\mathfrak{u}}_h-\bar{u}\|_{L^2(\Omega)}$. We assume that (up to a subsequence if necessary) $v_h \rightharpoonup v$ in $L^2(\Omega)$ as $h\downarrow 0$. The arguments in the proof of Lemma \ref{lemma:aux_var} immediately yields that $v$ satisfies \eqref{eq:sign_cond}. We now show that $v(x) = 0$ if $\bar{\mathfrak{p}}(x)\neq 0$ for a.e.~$x\in\Omega$. Invoke Theorem \ref{thm:error_estimates_adjoint_aux} to conclude that $\|\bar{\mathfrak{u}}_h-\bar{u}\|_{L^2(\Omega)}\to 0$ implies that $\bar{p}_h \to\bar{p}$ in $L^2(\Omega)$ as $h\downarrow 0$. This convergence and the variational inequality \eqref{eq:discrete_var_ineq_variational} with $u=\bar{u}$, allow us to arrive at
\begin{equation}
\int_\Omega \bar{\mathfrak{p}}(x)v(x) \mathrm{d}x
=
\frac{1}{\|\bar{\mathfrak{u}}_h-\bar{u}\|_{L^2(\Omega)}}\lim_{h\to 0}\int_\Omega(\bar{p}_h+\alpha\bar{\mathfrak{u}}_h)(\bar{\mathfrak{u}}_h-\bar{u}) \mathrm{d}x
\leq 
0.
\label{eq:aux_int}
\end{equation}
Since $v$ satisfies \eqref{eq:sign_cond}, we obtain $\bar{\mathfrak{p}}(x)v(x) \geq 0$. Therefore, \eqref{eq:aux_int} allows us to conclude that $\int_{\Omega} |\bar{\mathfrak{p}}(x)v(x)| \mathrm{d}x = \int_{\Omega} \bar{\mathfrak{p}}(x)v(x) \mathrm{d}x\leq 0$. Consequently, if $\bar{\mathfrak{p}}(x)\neq 0$, then $v(x) = 0$ for a.e.~$x\in\Omega$. This allows us to conclude that $v \in C_{\bar{u}}$.

The rest of the proof follows the same arguments developed in the proof of Lemma \ref{lemma:aux_var}. For the sake of brevity, we skip details.
\end{proof}

\begin{theorem}[error estimate]\label{thm:control_estimate_var1}
Assume that \textnormal{\ref{A1}}, \textnormal{\ref{A2}}, and \textnormal{\ref{A3}} hold, and that $a(\cdot, 0)\in L^\infty(\Omega)$. Let $\bar{u}\in\mathbb{U}_{ad}$ be such that \eqref{eq:second_order_2_2} holds. Then,
\begin{equation}\label{eq:control_error_estimate_var1}
\|\bar{u}-\bar{\mathfrak{u}}_h\|_{L^2(\Omega)} \lesssim h|\log h| \qquad \forall h < h_{\ddagger},
\end{equation}
with a hidden constant that is independent of $h$. 
\end{theorem}
\begin{proof}
Utilize \eqref{eq:aux_estimate_var}, \eqref{eq:variational_inequality}, and \eqref{eq:discrete_var_ineq_variational}, to deduce that
\begin{equation}\label{eq:ineq_var_error}
\|\bar{u}-\bar{\mathfrak{u}}_h\|_{L^2(\Omega)}^2
\lesssim
[j'(\bar{\mathfrak{u}}_h)-j_h'(\bar{\mathfrak{u}}_h)](\bar{\mathfrak{u}}_h-\bar{u}).
\end{equation}
Hence, in view of \eqref{eq:aux_estimate_2}, we immediately conclude that $\|\bar{u}-\bar{\mathfrak{u}}_h\|_{L^2(\Omega)}\lesssim h|\log h|$.
\end{proof}

\begin{lemma}[$\bar{u}(x)=\FF{\bar{\mathfrak{u}}_h}(x)$ on $\cup_{t\in\FF{\mathcal{E}}}B_{t}$]\label{lemma:uh_u}
Let $d=2$. For any $M>0$ there exists $h_{M}$ such that $|\bar{p}_h(x)|>M$ for every $x\in \cup_{t\in\FF{\mathcal{E}}}C_{t}$ and $h < h_{M}$. Here, \EO{$\mathcal{E}$ is defined in \eqref{eq:set_not_zero}} and $C_{t} \subset\Omega$ denotes a suitable open ball centered at $t\in\FF{\mathcal{E}}$ of strictly positive radius. This, in particular, implies that $\bar{u}(x)=\FF{\bar{\mathfrak{u}}_h}(x)$ for every $x\in \cup_{t\in\FF{\mathcal{E}}}B_{t}$; $B_{t}\subset\Omega$ being defined similarly.
\end{lemma}
\begin{proof}
%The proof follows from an adaptation of the arguments in \cite[Lemma 5.7]{MR3973329} and \cite[Lemma 7.2]{MR3973329}. For the sake of completeness, we briefly present a proof. 
\EO{Let $t\in \mathcal{E}$. We invoke the Lipschitz property \eqref{eq:lipschitz_prop} and the error estimates \eqref{eq:control_error_estimate_var1} and \eqref{eq:global_estimate_state_II} to arrive at
\begin{equation*}
0 
< |\bar{y}(t)-y_t| 
\leq |\bar{y}(t)-\hat{y}(t)| + |\hat{y}(t)-\bar{y}_h(t)| + |\bar{y}_h(t)-y_t|
\lesssim h|\log h| + |\bar{y}_h(t)-y_t|,
\end{equation*}
where $\hat{y}\in H_0^1(\Omega)$ denotes the unique solution to \eqref{eq:weak_st_eq} with $u$ replaced by $\bar{\mathfrak{u}}_h$. We can thus conclude the existence of $h_{\vee}>0$ such that, for $h < h_{\vee}$, we have $\bar{y}_h(t) - y_t \neq 0$. An adaption of the arguments elaborated in the proofs of \cite[Lemma 5.7]{MR3973329} and \cite[Theorem 4.6]{MR3614014}, that basically entail replacing the bilinear form $(\nabla \cdot, \nabla \cdot)_{L^2(\Omega)}$ by 
\begin{equation*}
\mathcal{B}(\cdot,\cdot):=(\nabla \cdot, \nabla \cdot)_{L^2(\Omega)} + \left(\tfrac{\partial a}{\partial y}(x,\bar{y}_h)\cdot,\cdot\right)_{L^2(\Omega)},
\end{equation*}
combined with the asymptotic behavior of $\hat{p}\in W^{1,r}(\Omega)$ -- the unique solution to \eqref{eq:hat_p} with $y_h$ replaced by $\bar{y}_h$ -- yield the existence of $h_{M}>0$ and suitable open balls $C_{t}$, with $t \in \mathcal{E}$, such that $|\bar{p}_h(x)| > M$ for every $x\in \cup_{t\in \mathcal{E}}C_{t}$ and $h < h_{M}$. On the basis of this bound, the projection formula \eqref{eq:projection_control_var} combined with the arguments elaborated at the end of the proof of Theorem \ref{thm:extra_regul_control} yield the desired result.}
\end{proof}

\begin{theorem}[improved error estimate]\label{thm:control_estimate_var}
Let $d=2$. \EO{Assume that $\mathcal{E}=\mathcal{D}$}. In the framework of Theorem \ref{thm:control_estimate}, we have the following \EO{quasi-}optimal error estimate:
\begin{equation}\label{eq:control_error_estimate_var}
\|\bar{u}-\bar{\mathfrak{u}}_h\|_{L^2(\Omega)} \lesssim h^2|\log h|^2 \qquad \forall h < h_{\ddagger},
\end{equation}
with a hidden constant that is independent of $h$. 
\end{theorem}
\begin{proof}
We begin with the following error estimate:
\begin{equation}\label{eq:aux_estimate_2_improved_var}
|[{j}^{\prime}(\bar{\mathfrak{u}}_h)- j^{\prime}_h(\bar{\mathfrak{u}}_h)](\bar{u}-\bar{\mathfrak{u}}_h)|
\lesssim
h^2|\log h|^2\|\bar{u}-\bar{\mathfrak{u}}_h\|_{L^2(\Omega)}.
\end{equation}
To obtain \eqref{eq:aux_estimate_2_improved_var}, we follow the arguments in Lemma \ref{lemma:aux_thm_2}. It suffices to bound $\mathbf{I}$ and $\mathbf{II}$; $\mathbf{I}$ being already controlled. To bound $\mathbf{II}$, we utilize Lemma \ref{lemma:uh_u} and \eqref{eq:error_estimate_fundamental_III}:
\begin{equation*}
\mathbf{II} 
\leq
\| \hat{p} - \hat{p}_{h}\|_{L^2(\Omega\setminus\cup_{t\in\mathcal{D}}\bar{B}_{t})}\|\bar{u}-\bar{\mathfrak{u}}_h\|_{L^2(\Omega\setminus\cup_{t\in\mathcal{D}}\bar{B}_{t})} 
\lesssim h^2|\log h|^2\|\bar{u}-\bar{\mathfrak{u}}_h\|_{L^2(\Omega)}.
\end{equation*} 

In view of \eqref{eq:ineq_var_error} and \eqref{eq:aux_estimate_2_improved_var}, we immediately arrive at the desired estimate. 
\end{proof}

\EO{
\begin{remark}[improved error estimate when $\mathcal{E} \subsetneq \mathcal{D}$] 
We notice that, if $\mathcal{E} \subsetneq \mathcal{D}$, the results of Lemma \ref{lemma:uh_u} allow us to control the term $\mathbf{II}$, within the proof of Theorem \ref{thm:control_estimate_var}, in a slightly different way:
\[
\mathbf{II} 
\leq
\| \hat{p} - \hat{p}_{h}\|_{L^2(\Omega\setminus\cup_{t\in\mathcal{E}}\bar{B}_{t})}\|\bar{u}-\bar{\mathfrak{u}}_h\|_{L^2(\Omega\setminus\cup_{t\in\mathcal{E}}\bar{B}_{t})}.
\]
We would like now to apply \eqref{eq:error_estimate_fundamental_III}. Unfortunately, estimate \eqref{eq:error_estimate_fundamental_III} yields a control on $\Omega\setminus\cup_{t\in\mathcal{D}}\bar{B}_{t}$ and $\Omega\setminus\cup_{t\in\mathcal{D}}\bar{B}_{t} \subsetneq \Omega\setminus\cup_{t\in\mathcal{E}}\bar{B}_{t}$. We thus proceed as follows:
%If $\mathcal{E}\neq \mathcal{D}$, the proof of Theorem \ref{thm:control_estimate_var} requires a further analysis due to the fact that, to bound the term $\mathbf{II}$, we cannot directly apply estimate \eqref{eq:error_estimate_fundamental_III} on $\Omega\setminus\cup_{t\in\mathcal{E}}\bar{B}_{t}$. In this case, we proceed as follows:
%\begin{align*}
%\mathbf{II} 
%&\leq
%\| \hat{p} - \hat{p}_{h}\|_{L^2(\Omega\setminus\cup_{t\in{\mathcal{E}}}\bar{B}_{t})}\|\bar{u}-\bar{\mathfrak{u}}_h\|_{L^2(\Omega\setminus\cup_{t\in{\mathcal{E}}}\bar{B}_{t})} \\
%& \leq (\| \hat{p} - \hat{p}_{h}\|_{L^2(\Omega\setminus\cup_{t\in\mathcal{D}}\bar{B}_{t})} + \| \hat{p} - \hat{p}_{h}\|_{L^2(\cup_{t\in \mathcal{D}\setminus\mathcal{E}}\bar{B}_{t})})\|\bar{u}-\bar{\mathfrak{u}}_h\|_{L^2(\Omega)}\\
%& \lesssim (h^2|\log h|^2 + \| \hat{p} - \hat{p}_{h}\|_{L^2(\cup_{t\in \mathcal{D}\setminus\mathcal{E}}\bar{B}_{t})})\|\bar{u}-\bar{\mathfrak{u}}_h\|_{L^2(\Omega)},
%\end{align*} 
\begin{align*}
\mathbf{II} 
& \leq (\| \hat{p} - \hat{p}_{h}\|_{L^2(\Omega\setminus\cup_{t\in\mathcal{D}}\bar{B}_{t})} + \| \hat{p} - \hat{p}_{h}\|_{L^2(\cup_{t\in \mathcal{D}\setminus\mathcal{E}}\bar{B}_{t})})\|\bar{u}-\bar{\mathfrak{u}}_h\|_{L^2(\Omega)}\\
& \lesssim (h^2|\log h|^2 + \| \hat{p} - \hat{p}_{h}\|_{L^2(\cup_{t\in \mathcal{D}\setminus\mathcal{E}}\bar{B}_{t})})\|\bar{u}-\bar{\mathfrak{u}}_h\|_{L^2(\Omega)},
\end{align*} 
where we have used estimate \eqref{eq:error_estimate_fundamental_III}. To control $e_{p}:= \hat{p} - \hat{p}_{h}$ on $\cup_{t\in \mathcal{D}\setminus\mathcal{E}}\bar{B}_{t}$, we invoke similar arguments to the ones that yield \eqref{eq:error_estimate_fundamental_III}, the fact that $y_{t}=\bar{y}(t)$ for $t\in\mathcal{D}\setminus\mathcal{E}$, the Lipschitz property \eqref{eq:lipschitz_prop}, and the error estimate \eqref{eq:global_estimate_state_II}. These arguments yield
\begin{multline*}
\| e_{p} \|^2_{L^2(\Sigma)} 
\lesssim h^2|\log h|^2\| e_{p} \|_{L^2(\Sigma)}  
%+ \sum_{t\in \mathcal{D}\setminus \mathcal{E}}h|\bar{y}_h(t)-y_t|
%\lesssim h^2|\log h|^2 
+ h \| e_{p} \|_{L^2(\Sigma)} 
\left(  \|\bar{y}_h - \hat{y}\|_{L^\infty(\Omega)} + \|\hat{y}-\bar{y}\|_{L^\infty(\Omega)} \right)
\\
\lesssim \left( h^2|\log h|^2 + h^2 + h\|\bar{\mathfrak{u}}_h -\bar{u} \|_{L^2(\Omega)} \right) \| e_{p} \|_{L^2(\Sigma)},
\end{multline*}
where $\Sigma = \cup_{t\in \mathcal{D}\setminus\mathcal{E}}B_{t}$. Here, $\hat{y}\in H_0^1(\Omega)$ denotes the unique solution to \eqref{eq:weak_st_eq} with $u$ replaced by $\bar{\mathfrak{u}}_h$. In view of \eqref{eq:control_error_estimate_var1} we can thus deduce that $\mathbf{II}\lesssim h^2|\log h|^2 \|\bar{u}-\bar{\mathfrak{u}}_h\|_{L^2(\Omega)}$.
\end{remark}
}
%%%%%%%%%%%%%%%%%%%%%%%%%%%%%%%%%%%%%%%%%%%%%%%%%%%%%%%%%%%%
%%%%%%%%%%%%%%%%%%%%%%%%%%%%%%%%%%%%%%%%%%%%%%%%%%%%%%%%%%%%
%%%%%%%%%%%%%%%%%%%%%%%%%%%%%%%%%%%%%%%%%%%%%%%%%%%%%%%%%%%%
%%%%%%%%%%%%%%%%%%%%%%%%%%%%%%%%%%%%%%%%%%%%%%%%%%%%%%%%%%%%
%%%%%%%%%%%%%%%%%%%%%%%%%%%%%%%%%%%%%%%%%%%%%%%%%%%%%%%%%%%%
%%%%%%%%%%%%%%%%%%%%%%%%%%%%%%%%%%%%%%%%%%%%%%%%%%%%%%%%%%%%
%%%%%%%%%%%%%%%%%%%%%%%%%%%%%%%%%%%%%%%%%%%%%%%%%%%%%%%%%%%%
%%%%%%%%%%%%%%%%%%%%%%%%%%%%%%%%%%%%%%%%%%%%%%%%%%%%%%%%%%%%

\section{Numerical example}\label{sec:num_ex}

In this section, we conduct a numerical experiment that illustrates the performance of the fully and semidiscrete schemes of sections \ref{sec:fully_discrete} and \ref{sec:variational_discretization}, respectively, when are used to approximate the solution to \eqref{def:cost_func}--\eqref{def:box_constraints}. The numerical experiment has been carried out with the help of a code that was implemented using \texttt{C++}. All matrices have been assembled exactly and global linear systems were solved using the multifrontal massively parallel sparse direct solver (MUMPS) \cite{MUMPS2}. The right hand sides and the approximation errors were computed by a quadrature formula which is exact for polynomials of degree nineteen $(19)$.

For a given mesh $\mathscr{T}_{h}$, we seek $(\bar{y}_{h},\bar{p}_{h},\bar{u}_{h})\in \mathbb{V}_{h}\times \mathbb{V}_{h} \times \mathbb{U}_{ad,h}$ and $(\bar{y}_{h},\bar{p}_{h},\bar{\mathfrak{u}}_{h})\in \mathbb{V}_{h}\times \mathbb{V}_{h} \times \mathbb{U}_{ad}$ that solve the fully and semidiscrete problems, respectively. We solve the fully discrete scheme on the basis of a primal--dual active set strategy \cite[section 2.12.4]{Troltzsch} combined with a fixed point algorithm. The semidiscrete problem is solved by using a semi--smooth Newton method.

\textbf{The numerical example.} We set $\Omega=(0,1)^{2}$, $a(\cdot,y)=y^3$,  $\texttt{b}=-\texttt{a}=5$, $\alpha=0.1$, 
\[
\mathcal{D}=\{(0.25,0.25), (0.75,0.25),(0.75,0.75),(0.25,0.75) \},
\]
$y_{(0.25,0.25)} = 3$, $y_{(0.75,0.25)} = -3$, $y_{(0.75,0.75)} = 3$, and $y_{(0.25,0.75)} = -3$.

In the absence of an exact solution, we calculate the error committed in the approximation of the optimal control variable by taking as a reference solution the discrete optimal control obtained on a fine triangulation $\T_{h}$: the mesh $\T_{h}$ is such that $h \approx 10^{-3}$. In Figure \ref{fig:ex}, we observe that optimal experimental rates of convergence, in terms of approximation, are attained: $\mathcal{O}(h)$ for the fully discrete scheme and $\mathcal{O}(h^2)$ for the semidiscrete scheme. These rates are in agreement with Theorems \ref{thm:control_estimate_improved} and \ref{thm:control_estimate_var}.

\begin{figure}[!ht]
\centering
\includegraphics[trim={0 0 0 0},clip,width=3.6cm,height=3.5cm,scale=0.33]{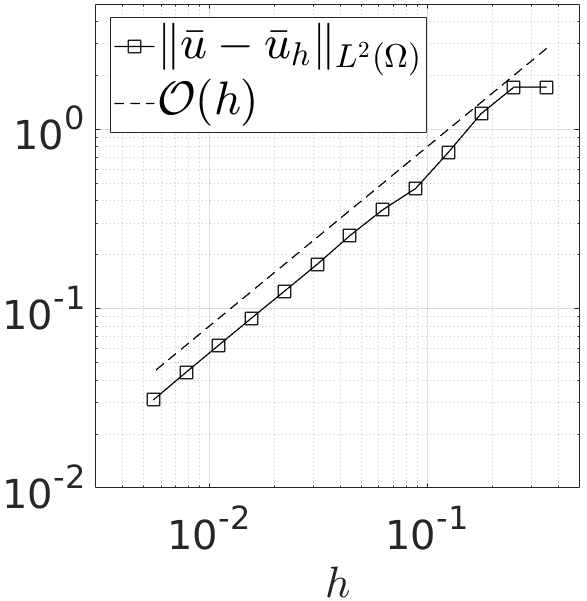}
\qquad
\includegraphics[trim={0 0 0 0},clip,width=3.6cm,height=3.5cm,scale=0.33]{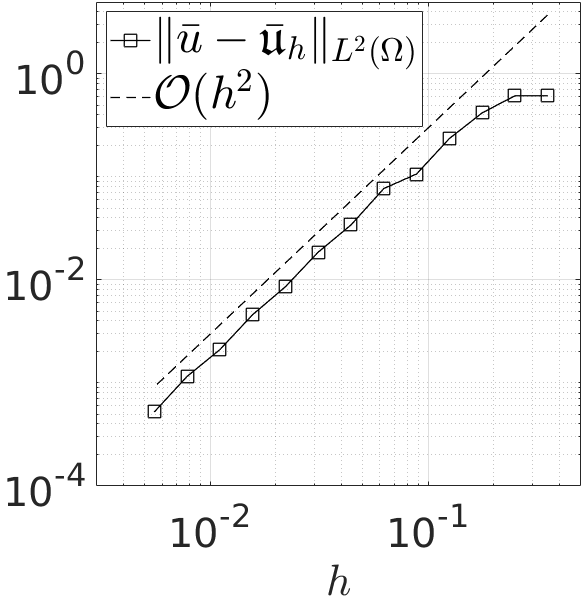}
\caption{Experimental rates of convergence for $\|\bar{u}-\bar{u}_{h}\|_{L^2(\Omega)}$ and $\|\bar{u}-\bar{\mathfrak{u}}_{h}\|_{L^2(\Omega)}$.}
\label{fig:ex}
\end{figure}

\textbf{Acknowledgments.} We would like to thank the anonymous referees for several comments and suggestions that led to better results and an improved presentation. In particular, we would like to thank a reviewer for providing a proof for the improved result presented in Theorem \ref{thm:control_estimate_improved}.

\bibliographystyle{siamplain}
\bibliography{bi_tracking}

\end{document}